\newcommand{\ds}{\displaystyle}
\newtheorem{theorem}{Theorem}[section]
\newtheorem{lemma}[theorem]{Lemma}
\newtheorem{proposition}[theorem]{Proposition}
\theoremstyle{definition}
\newtheorem{definition}[theorem]{Definition}
\newtheorem{remark}[theorem]{Remark}
\newtheorem{example}[theorem]{Example}
\numberwithin{equation}{section}
\newcommand{\gras}[1]{{\mathbb #1}}
\newcommand{\R}{\gras{R}}
\newcommand{\C}{\gras{C}}
\newcommand{\bS}{\gras{S}}
\newcommand{\bD}{\gras{D}}
\newcommand{\cU}{\mathcal{U}}
\newcommand{\cN}{\mathcal{N}}
\newcommand{\cM}{\mathcal{M}}
\newcommand{\codim}{\mbox{codim}}
\def\elem(#1,#2){  \{ \frac{#1}  {\overline {\ #2\ }} \} }
\title[Generalized plumbings and Murasugi sums]
    {Generalized plumbings and Murasugi sums}
\author{Burak Ozbagci}
    \address{Department of Mathematics, Ko\c{c} University, Rumelifeneri Yolu,
         34450, Sariyer, Istanbul, Turkey}
     \email{bozbagci@ku.edu.tr}
\author{Patrick Popescu-Pampu}
   \address{Universit{\'e} Lille 1, UFR de Maths., B\^atiment M2\\
     Cit\'e Scientifique, 59655, Villeneuve d'Ascq Cedex, France.}
   \email{patrick.popescu@math.univ-lille1.fr}
\date{}
\keywords{Cobordisms. Morse functions. 
    Murasugi sums.  Open books. Plumbing. Seifert surfaces.}
\begin{document}

\begin{abstract}
   We propose a generalization of the classical notions of plumbing and Murasugi
   summing operations to smooth manifolds of arbitrary dimensions, so that
   in this general context Gabai's credo
   ``the Murasugi sum is a natural geometric operation'' holds. In particular, we
   prove that the sum of the pages of two open books is again a page
   of an open book and that there is an associated summing
   operation of Morse maps.  We conclude with several open questions
   relating this work with singularity theory and contact topology.
\end{abstract}

\maketitle

\begin{center}
    {\bf This paper appeared in Arnold Math. Journal (2016) 2:69-119. 
         DOI 10.1007/s40598-015-0033-3}
\end{center}

\tableofcontents

\section{Introduction}
\label{sec:Intro}

Around 1960, Milnor and Mumford introduced independently particular
cases of an operation which builds new manifolds with boundary from
given ones: ``\emph{plumbing}''. Milnor used this operation to construct
exotic spheres in higher dimensions  and Mumford in order to
describe the boundaries of nice neighborhoods of isolated singular
points on complex surfaces.

Around the same time, Murasugi defined an analogous operation on
Seifert surfaces of links in the $3$-sphere. This operation was done
on embedded objects rather than abstract ones. Nevertheless, this
operation agrees with (a slight generalization of) the plumbing
operation on the embedded surfaces.

In the mid-seventies, Stallings introduced the name of ``\emph{Murasugi sum}''
for the operation above, and he showed that \emph{the Murasugi
sum of two pages of open books is again the page of an open book}.
Several years later, Gabai proved that Murasugi sum preserves other
properties of surfaces embedded in $3$-manifolds, and summarized the
general philosophy behind such results by the credo ``\emph{Murasugi
sum is a natural geometric operation}'' (see Gabai \cite{G 83}, \cite{G 83bis}, 
\cite{G 85}, \cite{G 86}).

In the mid-eighties, Lines proved an analog of Stallings' theorem
for special types of open books in higher dimensional spheres, after
having extended to that context the operation of Murasugi sum.

Details about the previous historical facts may be found in Sections
\ref{sec:classop} and \ref{sec:geomproof} of our paper.
\medskip

The effect of the Murasugi sum on the hypersurfaces under scrutiny
is to \emph{plumb} them, that is, roughly speaking, to identify by a special diffeomorphism two
balls embedded in them, in such a way that the result is again a
manifold with boundary.

The aim of this paper is \emph{to identify the most general
operation of plumbing in arbitrary dimensions, which allows
one to extend the classical operation of Murasugi sum, such
that Gabai's credo still holds}.

Our main result (see Theorem \ref{thm:genstal}) is that \emph{an
analog of Stallings' theorem holds if the plumbing operation is
generalized by allowing the gluing of two manifolds with boundary
through any diffeomorphism of compact full-dimensional submanifolds,
provided that the result is again a manifold with boundary}.

In particular, we never impose orientability hypotheses. Instead,
throughout  the paper the crucial assumptions are about
\emph{coorientability} of hypersurfaces. Moreover, we work with
fixed coorientations. As those coorientations are present in the
absence of any orientations on the ambient manifold or on the
hypersurface, we work in a slightly non-standard context. This
obliges us to give careful definitions of all the objects we
manipulate, by lack of a convenient source in the literature.

An important message of our work is that it is much easier to prove
that generalized Murasugi sums conserve geometric properties
(illustrating Gabai's credo) if the fundamental notion of \emph{sum}
is defined on special types of \emph{cobordisms}. In fact, the most
difficult result of our work from the technical viewpoint
(Proposition \ref{prop:samedef}) states that our generalization of
the Murasugi sum to arbitrary dimensions coincides with another
definition given in terms of cobordisms.

We believe that, combining our new operations with those
explored in \cite{KN 77} and \cite{NR 87}, one will get a better
understanding of the differential topology of singularities.

\medskip

Let us describe the structure of the paper.

In Section \ref{sec:classop} we sketch the historical evolution of
the notions of plumbing and Murasugi sum, through the works of
Milnor, Mumford, Murasugi, Stallings, Gabai and Lines. We
quote from the original sources, in order to allow the reader to
compare easily those classical constructions to ours.

In Section \ref{sec:geomproof} we explain Gabai's geometric proof of
Stallings' theorem. We describe a variant of his proof given by
Giroux and Goodman and  give a second interpretation of it as explained by
Etnyre.

In Section \ref{sec:notconv} we explain our basic conventions about \emph{coorientations}
of hypersurfaces in manifolds with boundary (see Definition \ref{def:coorient}),
their \emph{sides} and \emph{collar neighborhoods}
(see Definition \ref{def:sides}).

In Section \ref{sec:cobcorn} we set up our notation for
\emph{cobordism of manifolds with boundary} (see Definition
\ref{def:cobcorners}), which is essential for our approach, mainly
through its special case of \emph{cylindrical cobordisms} (see
Definition \ref{def:cylcob}). Cobordisms of manifolds with
boundary may also be \emph{composed}, just like usual cobordisms. In
the following sections, for concision,  we simply speak about
\emph{cobordisms} instead of \emph{cobordism of manifolds with
boundary}.

In Section \ref{sect:Seifsect} we describe the notions of
\emph{Seifert hypersurfaces} (see Definition \ref{def:seifhyp}) and
\emph{open books} (see Definition \ref{def:openbook}) and establish
the equivalence of these notions with some special types of
cobordisms.

In Section \ref{sec:absum} we introduce our generalizations of the
classical notions of \emph{plumbing} and \emph{Murasugi sum}. We
call them \emph{abstract} and \emph{embedded summing} respectively
(see Definitions \ref{def:absum} and \ref{def:embsumpatch}). For the
latter, the hypersurfaces to be summed are not assumed to be
coorientable, but only the identified \emph{patches} (see Definition
\ref{def:patch}) are assumed to be \emph{cooriented}. We show that
embedded summing is an associative but in general non-commutative
operation (see Proposition \ref{prop:propembsum}).

In Section \ref{sec:embsum} we introduce a supplementary structure
on cylindrical cobordisms, \emph{stiffenings}, which exist and are
unique up to isotopy, but which are not canonical. Such structures
are essential for the proofs presented in
Section~\ref{sect:natgeomop}. We also define a summing
operation on stiffened cylindrical cobordisms (see Definition
\ref{def:embsum}).

In Section \ref{sect:natgeomop} we show that, under the assumption
that the hypersurfaces which are to be summed in an embedded way are
\emph{globally cooriented}, the operation of embedded  summing may
be reinterpreted as a summing operation on cylindrical cobordisms
(see Proposition \ref{prop:samedef}). Our generalization of
Stallings' theorem (see Theorem \ref{thm:genstal}) is obtained then
easily by working with a stiffening adapted to the open books under
scrutiny. We also formulate an extension of this theorem to what we
call \emph{Morse open books} (see Definition
\ref{def:morseopenbook}).

Finally, in Section \ref{sec:ques} we list several open questions.
Some of them concern the relations of open books with singularity
theory and contact topology. For this reason, we begin that section by
recalling briefly the basics of those relations. We hope that
this work will be useful in particular to the researchers interested in the
topology of singular spaces and to those interested in the topology
of contact manifolds.

\medskip
{\bf Acknowledgments.} We thank the referee for his/her careful reading
   of the submitted version of this paper. The second author was supported
   by Labex CEMPI (ANR-11-LABX-0007-01) and by the grant
   ANR-12-JS01-0002-01 SUSI. Both authors were supported
   by the joint CNRS-Tubitak project 113F007, titled {\em Topology of surface
   singularities} (2013-14).

\bigskip
\section{Plumbing and Murasugi sums in the literature}
\label{sec:classop}

In this section we recall the classical notions of plumbing, as
defined by Milnor and by Mumford, as well as Murasugi's original
construction, its extensions by Stallings and Gabai to more general
$3$-dimensional operations and by Lines to arbitrary dimension.
\medskip

In \cite[p.71]{M 59}, Milnor constructed for any $k \geq 1$ a
$(2k-1)$-connected manifold-with-boundary $M_k$ of dimension $4k$
whose intersection form in dimension $2k$ has the following matrix:
   \[   \left(  \begin{array}{cccccccc}
              2 & 1 &  &  &  &   &  &  \\
              1 & 2 & 1  &  &  -1 &   &  &  \\
               & 1 & 2  & 1  &  &   &  &  \\
               &  & 1 & 2 & 1 &   &  &  \\
               & -1 &  & 1 & 2 & 1  &  &  \\
               &  &  &  & 1 &  2 & 1 &  \\
               &  &  &  &  & 1  & 2 & 1 \\
               &  &  &  &  &   & 1 & 2
        \end{array}   \right)  \]
in an appropriate basis, where the missing entries are $0$.
The determinant of this matrix is $1$, which ensures that
the boundary of the constructed manifold is homeomorphic to a
sphere. Milnor showed that this boundary  generated the cyclic group
of $7$-dimensional homotopy spheres which bound parallelizable manifolds.

In order to construct $M_k$, Milnor started from two transversal
copies of the sphere $\bS^{2k}$ inside $\bS^{2k} \times \bS^{2k}$,
intersecting in exactly two points, and having self-intersections
$+2$: the diagonal and its image by the map $1 \times \alpha$, where
$\alpha: \bS^{2k} \to \bS^{2k}$ denotes in his words the ``twelve
hour rotation which leaves the north pole fixed, and satisfies
$\alpha(x) = -x$ for $x$ on the equator''.

He took the universal cover $\tilde{U}$ of a tubular neighborhood
$U$ of the union $X$ of the two spheres, and looked at the total preimage
$\tilde{X}$ of $X$ inside $\tilde{U}$. He could easily find
 in $\tilde{U}$  a sequence:
  \[ T_1 \cup T_1' \cup   T_2 \cup T_2' \cup T_3 \cup T_3' \cup T_4 \cup T_4'   \]
of tubular neighborhoods of eight $(2k)$-dimensional spheres of
$\tilde{X}$ intersecting in a chain, whose intersection matrix is
isomorphic to the one given above, except that the two $-1$'s are
replaced by $0$-s. Milnor explains at this point:

\begin{quote}
    ``To correct this intersection matrix it is necessary to introduce an intersection
    between $T_1'$ and $T_3$, so as to obtain an intersection number $-1$.
    Choose a rotation of $\bS^{2k} \times \bS^{2k}$ which carries a region of
    $T'$ near the ``equator'' onto a region of $T$ near the ``equator'', so as to
    obtain an intersection number of $-1$. Matching the corresponding regions
    of $T_1'$ and $T_3$, we obtain a topological manifold $W_2$, with the required
    intersection matrix.''
\end{quote}

We note that $W_2$  is not the final manifold in Milnor's
construction, but this is not so important for our purposes. It is
this ``matching'' of regions which was later named
``\emph{plumbing}'', following a denomination introduced for a
related object by Mumford \cite{M 61}.

Mumford's problem in \cite{M 61} was to study the topology of the
boundary of a ``tubular neighborhood'' of a reducible compact
complex curve in a smooth complex surface. He assumed that the
irreducible components $E_i$ of the curve are \emph{smooth} and he
described the boundary $M$ of their union as the result of a
cut-and-paste operation done on the boundaries $M_i$ of tubular
neighborhoods of the individual $E_i$'s. One  first has to
cut some solid tori from the $M_i$'s  and then glue pairwise
collar neighborhoods of the boundary components created in this way.
He described those collar neighborhoods as ``\emph{standard plumbing
fixtures}'' (see \cite[Page 8]{M 61}). The term ``\emph{plumbing}''
was brought to this context! Later, it was used as a name for two
different but related operations:

 \begin{itemize}
      \item[$\bullet$] following Mumford, a cut-and-paste operation used to describe the
         \emph{boundary} of a tubular neighborhood of a union of submanifolds
         of a smooth manifold, intersecting generically (see \cite{N 81} and \cite{P 07});
      \item[$\bullet$] following Milnor, a purely pasting operation used to describe
      the tubular neighborhoods themselves.
 \end{itemize}

 One of the first definitions of this operation in a textbook is to be found
 in \cite[Chapter 8]{HNK 71}. Let us quote from it the definition of the plumbing
 of two $n$-disc bundles (see Figure \ref{S}, reproduced from the same book):

\begin{quote}
 \begin{definition}  \label{def:plumb}
      ``Let $\xi_1 =(E_1, p_1, \bS_1^n)$ and $\xi_2 =(E_2, p_2, \bS_2^n)$ be two oriented
      $n$-disc bundles over $\bS^n$. Let $D_i^n \subset \bS_i^n$ be embedded $n$-discs
      in the base spaces and let:
          \[ f_i : D_i^n \times D^n \to E_i | D_i^n \]
      be trivializations of the restricted bundles $E_i | D_i^n$ for $i = 1,2$.
      To {\bf plumb} $\xi_1$ and $\xi_2$ we take the disjoint union
      of $E_1$ and $E_2$ and identify the points $f_1(x,y)$ and $f_2(y,x)$
      for each $(x,y) \in D^n \times D^n$.''
 \end{definition}
 \end{quote}

 \begin{figure}[ht]
 \relabelbox \small {
 \centerline{\epsfbox{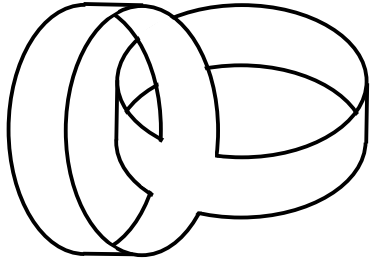}}}
\endrelabelbox
       \caption{Plumbing of two $n$-disc bundles according to \cite{HNK 71}}
       \label{S}
\end{figure}

 It was Hirzebruch \cite{H 63} who related Milnor's and Mumford's constructions:

 \begin{quote}
    ``$M(E_8)$ was constructed by ``plumbing'' $8$ copies of the circle bundle
    over $\bS^{2k}$ with Euler number $-2$. By replacing this basic constituent by
    the tangent bundle of $\bS^{2k}$ one obtains a manifold $M^{4k -1}(E_8)$
    of dimension $4k-1$. This carries a natural differentiable structure.
    For $k \geq 2$ it is homeomorphic to $\bS^{4k-1}$, but not diffeomorphic
    (Milnor sphere).''
 \end{quote}

  Here Hirzebruch proposed an alternative construction of a generator
  of the group of homotopy spheres of dimension $7$, as the intersection
  matrix of the $E_8$ diagram is simpler than the one considered by Milnor
  in \cite{M 59}. In fact, Milnor presented later in \cite{M 64} Hirzebruch's
  ``plumbing'' construction according to the  $E_8$ diagram
  rather than his initial construction.

The operation of ``plumbing'' was generalized from $n$-disc bundles over
$n$-dimensional spheres to arbitrary $n$-dimensional manifolds as
base spaces, the identifications of $f_1(x,y)$ and $f_2(-y,x)$ being also
allowed (see, for example, Browder's book \cite[Section V.2]{B 72}).
 Nevertheless, what remained unchanged was the structure
of the subbundles to be patched: products $D^n \times D^n$ of
$n$-dimensional discs.

\medskip
Now let us turn our attention to the related notion of
\emph{Murasugi sum}. We quote below the original construction by
Murasugi \cite[p.545]{M 63}, illustrating it in Figures \ref{Z} and
\ref{Y} by drawings which are similar to Murasugi's original ones:

\begin{quote}
   ``Let us consider an orientable surface $F$ in $\bS^3$ [...] consisting of two
   disks $D_1, D_2$ to which $n$ bands $B_1, B_2, ..., B_n$ are attached.
   All $B_i$ are twisted once in the same direction, and the bands are
   pairwise disjoint and do not link one another. Let us call $F$ a
   \emph{primitive s-surface} of type $(n, \epsilon)$, where
   $\epsilon = \pm 1$ according as the twisting is right-handed or left-handed.
   [...]

   Consider two primitive $s$-surfaces $F$ and $F'$ in $\bS^3$ of type
   $(n, \epsilon)$ and $(m, \eta)$. Take two disks, $D_1$ and $D_1'$ say,
   from each $F$ and $F'$ and identify them so that the resulting orientable surface
   $\tilde{F} = F \cup F'$ spans a link, and that $\tilde{F} - F$ and
   $\tilde{F} - F'$ are \emph{separated}, i.e. there exists a $2$-sphere
   $S$ in $\bS^3$ such that $S \cap \tilde{F} = D_1 (= D_1')$ and each component of
   $\bS^3 - S$ contains points of $\tilde{F} -  D_1$. [...] $\tilde{F}$ will be called
   an \emph{s-surface}. [...] In general, by an \emph{s-surface} is meant an orientable
   surface obtained from a number of primitive s-surfaces by identifying their disks
   in this manner.''

\begin{figure}[ht]
  \relabelbox \small {
  \centerline{\epsfbox{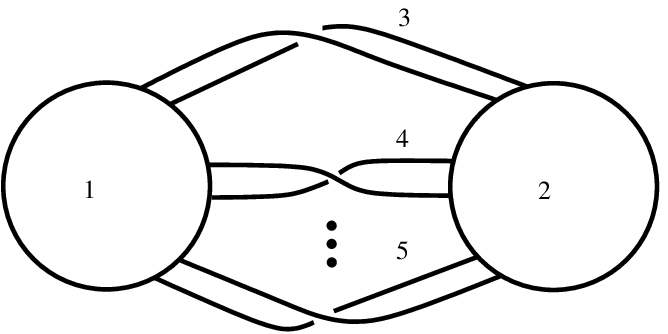}}}

\relabel{1}{$D_1$}

\relabel{2}{$D_2$}

\relabel{3}{$B_1$}

\relabel{4}{$B_2$}

\relabel{5}{$B_n$}

\endrelabelbox
        \caption{Primitive s-surface of type $(n, 1)$, whose boundary is the $(-2,n)$-torus link}
        \label{Z}
\end{figure}

\begin{figure}[ht]
  \relabelbox \small {
  \centerline{\epsfbox{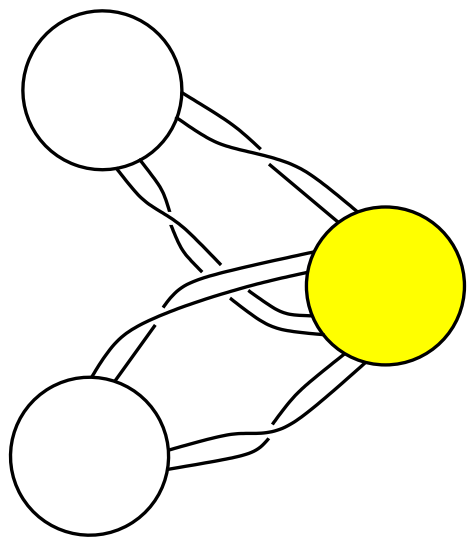}}}

\endrelabelbox
        \caption{Disks in primitive s-surfaces of type $(2, 1)$ and of type
            $(2, -1)$ are identified to give a Seifert surface for a figure-eight knot.}
        \label{Y}
\end{figure}

\end{quote}

The ``\emph{primitive s-surfaces}'' used by Murasugi are
\emph{fiber-surfaces}, that is, they appear as the pages of some
open books in $\bS^3$ (see Definition \ref{def:openbook} below).
In \cite[p.56]{S 78}, Stallings generalized
Murasugi's construction to arbitrary fiber-surfaces as follows:

\begin{quote}
    ``Consider two oriented fibre surfaces $T_1$ and $T_2$. On $T_i$ let $D_i$
    be $2$-cells, and let $h : D_1 \to D_2$ be an orientation-preserving
    homeomorphism such that the union of $T_1$ and $T_2$ identifying $D_1$ with
    $D_2$ by $h$ is a $2$-manifold $T_3$. That is to say:
      \begin{equation}  \label{eq:plumbrel}
            h(D_1 \cap \mbox{Bd } T_1) \cup (D_2 \cap \mbox{Bd } T_2) =  \mbox{Bd } D_2.
       \end{equation}
      [Here $\mbox{Bd } X$ denotes the boundary of $X$].

      We can realize $T_3$ in $\bS^3$ as follows: Thicken $D_1$ on the positive
      side of $T_1$, to get a $3$-cell, whose complementary $3$-cell $E_1$ contains
      $T_1$ with $T_1 \cap \:  \mbox{Bd } E_1 = D_1$ and with negative side of
      $T_1$ contained in the interior of $E_1$. Likewise, there is a $3$-cell
      $E_2$ containing  $T_2$ with $T_1 \cap \:  \mbox{Bd } E_1 = D_1$ and with
      the positive side of  $T_2$ contained in the interior of $E_2$. The homeomorphism
      $h: D_1 \to D_2$ extends to $h : \mbox{Bd } E_1 \to \mbox{Bd } E_2$.
      The union of $E_1$ and $E_2$, identifying their boundaries by $h$
      - this is $\bS^3$ - contains $T_3$ as $T_1 \cup T_2$. We say
      $T_3$ is obtained from $T_1$ and $T_2$ by \emph{plumbing}.''
\end{quote}

The main result of Stallings' paper is:

\begin{theorem} \label{thm:stal}
    If $T_1$ and $T_2$ are fibre surfaces, so is $T_3$.
\end{theorem}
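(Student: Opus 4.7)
The strategy is to produce an open book decomposition of $\bS^3$ having $T_3$ as a page, by combining the open book structures with pages $T_1$ and $T_2$. Reformulating the hypothesis, each fibre surface $T_i$ is the page of an open book $(T_i, \psi_i)$ of $\bS^3$, where $\psi_i \colon T_i \to T_i$ is a monodromy diffeomorphism fixing $\partial T_i$ pointwise, and $\bS^3$ is recovered as the mapping torus of $\psi_i$ glued to a tubular neighborhood of the binding $\partial T_i$. The goal is to produce analogous data $(T_3, \psi_3)$ that recovers $\bS^3$.

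For the candidate monodromy, extend each $\psi_i$ by the identity on $T_3 \setminus T_i$ to a diffeomorphism $\tilde{\psi}_i \colon T_3 \to T_3$. This extension is well defined and smooth because $\psi_i$ is the identity on $\partial T_i$, which contains $\partial D_i$: the circle along which $T_i$ is joined to the other page. Define $\psi_3 := \tilde{\psi}_1 \circ \tilde{\psi}_2$. The claim is that the open book with page $T_3$ and monodromy $\psi_3$ has total space $\bS^3$.

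To verify this claim I would exploit Stallings' decomposition $\bS^3 = E_1 \cup_h E_2$. Inside each 3-cell $E_i$, the fibration $\phi_i \colon \bS^3 \setminus \partial T_i \to \bS^1$ restricts to a structure presenting $E_i$ as a ``half'' of the mapping torus of $\psi_i$: the fibres over one closed half of $\bS^1$, capped off near $\partial T_i \cap E_i$. The disc $D_i \subset \partial E_i$ appears as a trivial piece in this structure. The identification $h$, which matches $D_1$ with $D_2$ and swaps the roles of positive and negative sides, glues the two half mapping tori into the mapping torus of $\psi_3$ on $T_3$, and simultaneously glues the two partial binding neighborhoods into the full binding tube of $\partial T_3$.

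The main obstacle is the matching of the fibrations $\phi_1$ and $\phi_2$ on the common sphere $S = \partial E_1 = \partial E_2$. One must normalise each $\phi_i$ near $\partial E_i$, via an ambient isotopy inside $E_i$ supported away from $T_i$, so that $\phi_i|_{\partial E_i}$ has a standard form: an argument-type map in a collar of $\partial D_i$ and locally constant on each of the two subdiscs complementary to that collar inside $\partial E_i$. After this normalisation, and after composing $h$ with a suitable isotopy of the sphere, the two fibrations glue smoothly across $S$ into a global fibration of $\bS^3 \setminus \partial T_3$ over $\bS^1$ with fibre $T_3$, establishing that $T_3$ is a fibre surface.
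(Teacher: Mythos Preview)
Your overall strategy is sound and aligns with one of the two viewpoints the paper presents: the ``abstract open book'' interpretation (attributed there to Etnyre and to Goodman's thesis), in which one removes a thickened patch $R_i \times I_i$ from each mapping torus $\cM(T_i,\psi_i)$, glues the complements so that the result is $\cM(T_3,\tilde\psi_1\circ\tilde\psi_2)$, and then checks that filling in the binding realises the connected sum. The monodromy formula $\psi_3=\tilde\psi_1\circ\tilde\psi_2$ you write down is exactly the one the paper obtains.

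However, your description of $E_i$ contains a real error that would derail the argument if followed literally. You write that $E_i$ is ``a `half' of the mapping torus of $\psi_i$: the fibres over one closed half of $\bS^1$''. That is not what $E_i$ is. In Stallings' construction $E_i$ is the complement of a one-sided thickening $D_i\times I^\pm$ of the disc $D_i$, so $E_i$ contains the \emph{entire} page $\phi_i^{-1}(t)$ for $t$ in one half of $\bS^1$, and the page \emph{minus} $D_i$ for $t$ in the other half (plus most of the binding neighbourhood). The gluing $h$ then matches the hole left by removing $D_1$ from half the pages of $\phi_1$ with the hole left by removing $D_2$ from the \emph{complementary} half of the pages of $\phi_2$; this is precisely why the plumbed surface $T_3=T_1\cup_D T_2$ appears as the new fibre. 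If $E_i$ were literally only the pages over half of $\bS^1$, the glued object would have fibre $T_1$ over one half-circle and fibre $T_2$ over the other, which is not a fibration.

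The paper's primary proof (Gabai's, in the Giroux--Goodman formulation) avoids your normalisation step altogether by writing down an explicit local model: a concrete map $\tilde\theta:\R^3\setminus\tilde K\to\bS^1$ near a properly embedded arc in a page, together with an explicit orientation-reversing diffeomorphism $\tilde\rho$ of an ellipsoid satisfying $\tilde\theta\circ\tilde\rho=\tilde\theta+\pi$. This makes the matching of fibrations on the separating sphere automatic rather than something to be arranged by hand. Your approach is more conceptual but, as you correctly flag, leaves the boundary matching as the real work; once you correct the picture of $E_i$, that matching is exactly the statement that the two induced ``foliations'' on the separating sphere agree up to isotopy, which is what the explicit model in the paper certifies.
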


This shows in particular that the s-surfaces of Murasugi are fibre surfaces.
Note that, Stallings' definition of (embedded) ``plumbing'' applies to any
oriented surfaces in $\bS^3$, not only to fibre surfaces.

In \cite[p.132]{G 83}, Gabai coined the name ``Murasugi sum'' for a
slightly restricted operation:

\begin{quote}
   ``The oriented surface $R \subset \bS^3$ is a Murasugi sum of compact
   oriented surfaces $R_1$ and $R_2$ in $\bS^3$ if:
      \begin{enumerate}
          \item $R = R_1 \cup_D R_2$, $D = 2n$ gon
          \item  $R_1 \subset B_1, R_2 \subset B_2$ where
             $B_1 \cap B_2 = S, S$ a $2$-sphere,
                $B_1 \cup B_2 = \bS^3$ and $R_1 \cap S = R_2 \cap S = D$. ''
      \end{enumerate}
\end{quote}

As remarked by Gabai, this definition extends immediately to an
operation on oriented surfaces in arbitrary oriented $3$-manifolds.

Note that in the definition above, the way that $D$ is
embedded in $R_1 \cup_D R_2$ is not explicitly stated, but in
Gabai's drawing \cite[Figure 1]{G 83} the edges on the boundary of
the $2n$-gon $D$ appear as arcs included \emph{alternately} in the
interior of $R_1$ and in the interior of $R_2$. Thus we may deduce
that this slightly restricted operation is what Gabai had in mind
from the fact that $\partial D$ gets a structure of a polygon with
an even number of edges from its embedding in both $R_1$ and $R_2$.

In \cite{R 98} Rudolph called this second, more restrictive interpretation
of the summing operation, ``Murasugi sum'' and reserved the name
``Stallings plumbing'' for Stallings' more general definition. Changing his
notations to those of Stallings' paper, in order to be able to make
reference to the identity (\ref{eq:plumbrel}), let us quote his comparison
of the two definitions:

\begin{quote}
    ``On its face, Stallings plumbing is a strict generalization of Murasugi sum,
    [...] its seemingly special case in which [...] (\ref{eq:plumbrel}) is
    supplemented by:
          \begin{equation}  \label{eq:Murarel}
            h(D_1 \cap \mbox{Bd } T_1) \cap (D_2 \cap \mbox{Bd } T_2) =
                 \partial(D_2 \cap \mbox{Bd } T_2).
       \end{equation}
      In fact, however, it is easy to see that (up to ambient isotopy) every
      Stallings plumbing is a Murasugi sum of the same plumbands. The distinction
      is nonetheless useful and will be maintained here."
\end{quote}

The fact that the more general notion of ``Stallings plumbing'' is
``nonetheless useful'', even if it describes the same objects as the
``Murasugi sum'' may be seen already from the first application of
Theorem \ref{thm:stal} given by Stallings in his paper
(\cite[Theorem 2]{S 78}):

\begin{theorem}  \label{thm:hombraid}
   The oriented link $\hat{\beta}$ obtained by closing a homogeneous braid
   $\beta$ is fibered.
\end{theorem}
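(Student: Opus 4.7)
The plan is to deduce this theorem from Theorem \ref{thm:stal} by realizing the Bennequin surface of $\hat{\beta}$ as an iterated Stallings plumbing of Hopf bands.

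First, I would apply Seifert's algorithm to the standard closed-braid diagram of $\hat{\beta}$. For a braid word $\beta = \sigma_{i_1}^{\epsilon_1}\cdots\sigma_{i_k}^{\epsilon_k}$ on $n$ strands, this yields the Bennequin surface $F_\beta$, consisting of $n$ nested parallel disks $D_1,\ldots,D_n$ joined by $k$ half-twisted rectangular bands $b_1,\ldots,b_k$, where $b_j$ attaches $D_{i_j}$ to $D_{i_j + 1}$ with twist sign $\epsilon_j$, and $\hat{\beta}=\partial F_\beta$.

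Next, I would argue by induction on $k$ that $F_\beta$ is a fibre surface. In the base case no generator $\sigma_i$ is used more than once, so after Markov-destabilization the closed braid $\hat{\beta}$ is an unknot, whose disk Seifert surface is a fibre surface. For the inductive step, homogeneity of $\beta$ allows me to locate a pair of bands $b_j, b_{j'}$ joining the same pair $D_i, D_{i+1}$ with $\epsilon_j = \epsilon_{j'}$. Together with the thin rectangles cut out of $D_i$ and $D_{i+1}$ between their attaching arcs, these two bands cobound a sub-surface $H \subset F_\beta$ that is an annulus with a single full twist of sign $\epsilon_j$, i.e.\ a Hopf band. The Hopf band is a fibre surface: its boundary is the Hopf link, whose complement in $\bS^3$ is classically fibered via the Milnor fibration of $z_1 z_2$ (resp.\ $z_1 \bar{z}_2$). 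A direct geometric argument then shows that $H$ is a Stallings plumband of $F_\beta$, whose complementary plumband is the Bennequin surface $F_{\beta'}$ of a homogeneous braid $\beta'$ with $k-2$ crossings, which is a fibre surface by the induction hypothesis.

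The theorem then follows by applying Theorem \ref{thm:stal}: since $F_\beta$ is the Stallings plumbing of the two fibre surfaces $H$ and $F_{\beta'}$, it is itself a fibre surface, and hence $\hat{\beta}$ is fibered. The main technical obstacle is the geometric verification that $H$ really is a Stallings plumband of $F_\beta$: one must construct explicit $3$-balls $E_1, E_2 \subset \bS^3$ with $E_1 \cap E_2 = \mbox{Bd } E_1 = \mbox{Bd } E_2$ a $2$-sphere, each containing one of the two plumbands, and meeting $F_\beta$ in exactly the plumbing disk. Homogeneity is what permits these balls to be placed coherently along a ``column'' of equally twisted bands between $D_i$ and $D_{i+1}$; bands of opposite twist would force the separating sphere to meet $F_\beta$ in more than one disk, violating condition (\ref{eq:plumbrel}).
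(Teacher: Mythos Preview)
Your strategy differs from the paper's: rather than peeling off Hopf bands by induction on $k$, the paper recognizes the Seifert surface directly as an $s$-surface of Murasugi. Between each consecutive pair of Seifert disks $D_i, D_{i+1}$ all bands have the same twist (this is exactly what homogeneity says), so that sub-surface is a primitive $s$-surface $F_i$ of some type $(n_i, \epsilon_i)$. The whole surface is then the iterated Stallings plumbing $F_1 \cup_{D_2} F_2 \cup_{D_3} \cdots \cup_{D_{n-1}} F_{n-1}$, glued along the \emph{full} disks $D_2, \ldots, D_{n-1}$. Since each primitive $s$-surface is a fibre surface (its boundary is a torus link), Theorem~\ref{thm:stal} finishes the proof in one stroke, with no induction on the word length.

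Your inductive step, by contrast, has a genuine gap. The annulus $H$ built from two same-column bands $b_j, b_{j'}$ and the intervening rectangles is not in general a Stallings plumband whose complement is again a Bennequin surface. Take $\beta = \sigma_1\sigma_2\sigma_1\sigma_2$: the two $\sigma_1$-bands are adjacent on $D_1$ but \emph{not} on $D_2$ (the $\sigma_2$-band $b_2$ sits between them there), and whichever rectangle you declare to be the plumbing disk, the complementary piece is disconnected and hence not of the form $F_{\beta'}$. The standard Hopf-band deplumbing actually removes only \emph{one} band (so $\beta'$ has $k-1$ crossings, not $k-2$), and it requires a subword $\sigma_i^{\pm 2}$ of two consecutive identical letters --- something a general homogeneous word need not contain without first applying braid relations, an extra reduction you have not supplied. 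The paper's column-wise decomposition into primitive $s$-surfaces sidesteps these difficulties entirely.
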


A \emph{homogeneous braid} is described by a word in the standard
presentation of the braid groups, such that each generator appears
always with exponents of the same sign. In the special case in which
the generators are always positive, one obtains the usual notion of
\emph{positive braid}. Stallings' proof considers the Seifert
algorithm for constructing a Seifert surface applied to the diagram
of the link $\hat{\beta}$ associated to the given word. The Seifert
surface appears constructed as a finite sequence of disks situated
in parallel planes, successive disks being connected by twisted
bands. The condition of homogeneity says that \emph{all} the bands
between two given successive disks are twisted in the same sense
(see Figure \ref{X}).
One recognizes therefore an s-surface of Murasugi, which is in
general a ``Stallings plumbing'' in Rudolph's sense, but not a
``Murasugi sum'' in Gabai's sense.

\begin{figure}[ht]
  \relabelbox \small {
  \centerline{\epsfbox{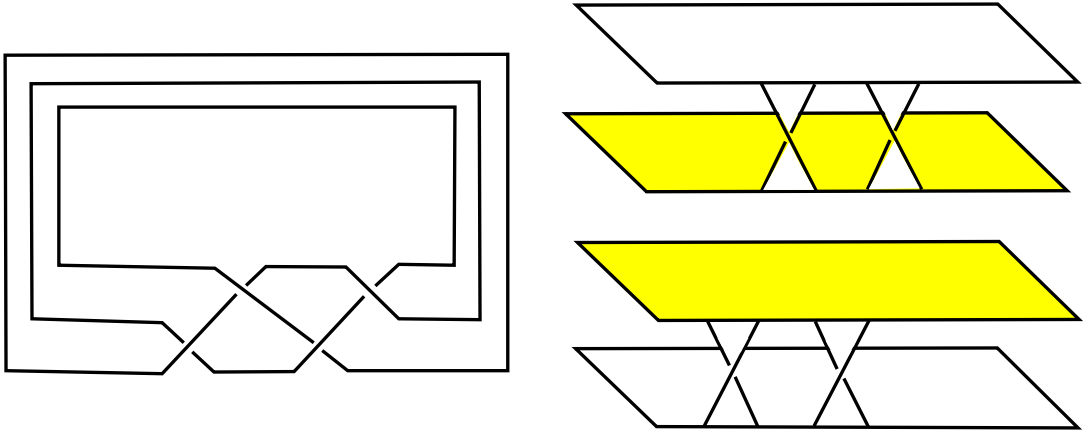}}}

\endrelabelbox
        \caption{On the left: the figure eight knot $\hat{\beta}$ which is the
        closure of the homogeneous braid
        $\beta=\sigma_1^{-1}\sigma_2\sigma_1^{-1}\sigma_2$. On the
        right: the top two disks with twisted bands connecting them
        form a primitive s-surface of type $(2,-1)$, while the lower two disks with twisted
        bands connecting them
        form a primitive s-surface of type $(2,1)$. By gluing these primitive s-surfaces in the
        obvious way, we get a Seifert surface for $\hat{\beta}$. Compare with Figure~\ref{Y}.}
        \label{X}
\end{figure}

\medskip
For a special type of higher dimensional  hypersurfaces in spheres,
a generalization of Murasugi summing was
studied by Lines in a series of papers \cite{L 85, L 86, L 87}. Here are
the definitions he used:

\begin{definition} \label{def:simpleknot}
    A {\bf knot}  $K \subset  \bS^{2k+1}$ is a $(k-2)$-connected oriented
    $(2k-1)$-dimensional submanifold. A {\bf Seifert surface}
    for $K$ is a compact oriented hypersurface of $\bS^{2k+1}$
    with boundary $K$. The knot $K$ is called {\bf simple} if it admits a
    $(k-1)$-connected Seifert surface.
\end{definition}

The following definition appeared in \cite[Section 2]{L 85}:

\begin{definition} \label{def:msumhigh}
Let $K_1$ and $K_2$ be two simple knots in $\bS^{2k+1}$ bounding
$(k-1)$-connected Seifert surfaces $F_1$ and $F_2$ respectively.
Suppose that $\bS^{2k+1}$ is the union of two balls $B_1$ and $B_2$
with a common boundary which is a $(2k)$-sphere $S$.
Let $\psi: \bD^k \times \bD^k \to S$ be an embedding such that:

\begin{enumerate}
   \item  $F_1 \subset B_1, \; $  $ F_2 \subset B_2$;
   \item $F_1 \cap S =F_2 \cap S= F_1 \cap F_2=\psi(\bD^k \times
        \bD^k)$;
   \item $\psi(\partial \bD^k \times \bD^k)= \partial F_1 \cap \psi(\bD^k \times
        \bD^k)$ and  $\psi(\partial \bD^k \times \partial \bD^k)=
        \partial F_2 \cap \psi(\bD^k \times
        \bD^k)$.
 \end{enumerate}
Then the submanifold $F:= F_1 \cup F_2 \subset \bS^{2k+1}$,
after smoothing the corners, is said to be {\bf obtained by plumbing together}
the Seifert surfaces  $F_1$ and $F_2$.
 \end{definition}

In \cite[Proposition 2.1]{L 85}, Lines proved that Theorem
\ref{thm:stal} extends to this context.  His proof is algebraic, not
geometric. In the sequel, we will extend his definition, dropping
any hypothesis on the topology of the pages and of the ambient
manifold
 (see Definition \ref{def:embsumpatch}), and we  will show, through a
geometric proof,  that Theorem \ref{thm:stal} extends also to this
more general context (see Theorem \ref{thm:genstal}).

\bigskip
\section{A geometric proof of Stallings' Theorem}
\label{sec:geomproof}

For the sake of completeness,  we include here a geometric proof of
Theorem~\ref{thm:stal}, for the most frequently used case in the
literature, where the plumbing region is just a rectangle ($n=2$ in
Gabai's Murasugi sum). The principle of the proof below is due to
Gabai \cite[p.139-141]{G 83}, although we will present here another
formulation of his proof which appeared more recently in
\cite[p.101]{GG 06}, using the language of open books (see
Definition~\ref{def:openbook}), rather than fibered surfaces or
foliations.
\medskip

First we prepare a local model of a neighborhood of a properly
embedded arc in the page of  an open book in an arbitrary
$3$-manifold  as follows. Set:
        \[ \tilde{K}=\{(x,y,z) \in \mathbb{R}^3 \;| \; x=\pm{1},  \;\; y=0\}  \]
and let $\tilde{\theta} :  \mathbb{R}^3 \setminus \tilde{K} \to
\mathbb{S}^1$ be the map defined by:
        \[ \tilde{\theta}(x,y,z) = \arg \big(\dfrac{1+x+iy}{1-x-iy}\big)=\arg
                    (1-x^2-y^2+2iy). \]
As $\tilde{\theta}$ does not depend on the $z$-coordinate,
for each $t \in \mathbb{S}^1$, the preimage $\tilde{\theta}^{-1} (t)
$ can be described as the intersection $\tilde{\theta}^{-1} (t) \cap
\{z=0\}$, translated invariantly in the $z$-direction. Therefore, to
visualize  $\tilde{\theta}^{-1} (t) $, it suffices to understand
$(\tilde{\theta}|_{\{z=0\}}) ^{-1} (t)$ which is the preimage of a
ray
 starting from the origin in the complex plane under the homography 
 defined by the equation $w =
\dfrac{1+u}{1-u}$,
 where $u = x + iy$. Since homographies preserve the circles, each such preimage
 is included in some  circle on the $xy$-plane. Using the last equality above,
 it is possible to see that for each $t \neq 0, \pi \in \mathbb{S}^1$, the preimage 
 $(\tilde{\theta}|_{\{z=0\}}) ^{-1} (t)$ is  an
 open arc of a circle passing through
 $(1, 0)$ and $(-1,0)$, as depicted in Figure~\ref{opbook}.  For $t=0$ and $t=\pi$, 
 these preimages are given by the segment $(-1,1)$
 and $\R \setminus [-1, 1]$ on the $x$-axis, respectively.

It follows that, for $t \neq \pi \in \mathbb{S}^1$, the union
$\tilde{\theta}^{-1} (t) \cup \tilde{K}$ is a connected infinite
strip parallel to the $z$-axis, while $\tilde{\theta}^{-1} (\pi)
\cup \tilde{K}$ consists of two connected components. Therefore,
$\tilde{\theta}$ is not a locally trivial fibration over
$\mathbb{S}^1$ (and hence it does not define an open book on
$\mathbb{R}^3$), but nevertheless, $\tilde{\theta}^{-1} (t) \cup
\tilde{K}$ is still called a ``page'' of $\tilde{\theta}$, since it
gives a ``piece" of an open book.

\begin{figure}[ht]
  \relabelbox \small {
  \centerline{\epsfbox{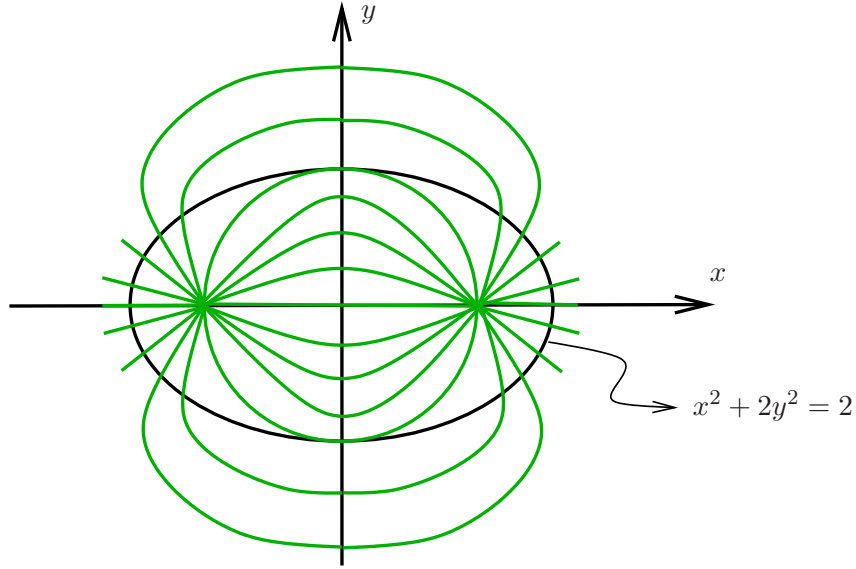}}}

\relabel{1}{$x$}

\relabel{2}{$y$}

\relabel{3}{$x^2 + 2y^2=2$}

\endrelabelbox
        \caption{The intersection of the ``pages" of $(\tilde{K}, \tilde{\theta})$ with the $xy$-plane, 
        and the ellipse $x^2 + 2y^2=2$.}
        \label{opbook}
\end{figure}

Let $E=\partial B$ denote the ellipsoid which is the boundary of the
domain:
   \[B=\{(x,y,z) \in \mathbb{R}^3 \;| \; x^2 + 2y^2 + z^2 \leq 2\}. \]
Note that, for all $t \neq \pm\pi/2$, the pages of the ``open
book'' $(\tilde{K}, \tilde{\theta})$  intersect $E$ transversely
inducing a foliation on $E \setminus \tilde{K}$, where $E \cap
\tilde{K} =\{(1,0,1), (1,0,-1), (-1, 0, -1), (-1, 0, 1)\}$. This
foliation agrees with what Gabai depicted in \cite[Fig.4]{G 83}. It
is invariant with respect to the reflections along all three
coordinate planes, and under a rotation of  angle $\pi$ about all
three coordinate axes.

The four points in $E \cap \tilde{K}$ are the corners of a square
inscribed in the circle of radius $\sqrt{2}$ on the $xz$-plane (see
Figure~\ref{ell}). Moreover, the map:
   \[\tilde{\rho}: E \to E, \; \; (x,y,z) \to (z, -y, -x) \]
 cyclically permutes these
four points, rotating the square (clockwise) in the $xz$-plane by an
angle of $\pi/2$. Furthermore,   $\tilde{\rho}$ is an orientation
reversing self-diffeomorphism of $E$ such that:
     \[\tilde{\theta} \circ \tilde{\rho} (x,y,z) = \tilde{\theta} (x,y,z) + \pi \;\;\; \mbox{for
          any} \;(x,y,z) \in E \setminus \tilde{K} . \]

\begin{figure}[ht]
  \relabelbox \small {
  \centerline{\epsfbox{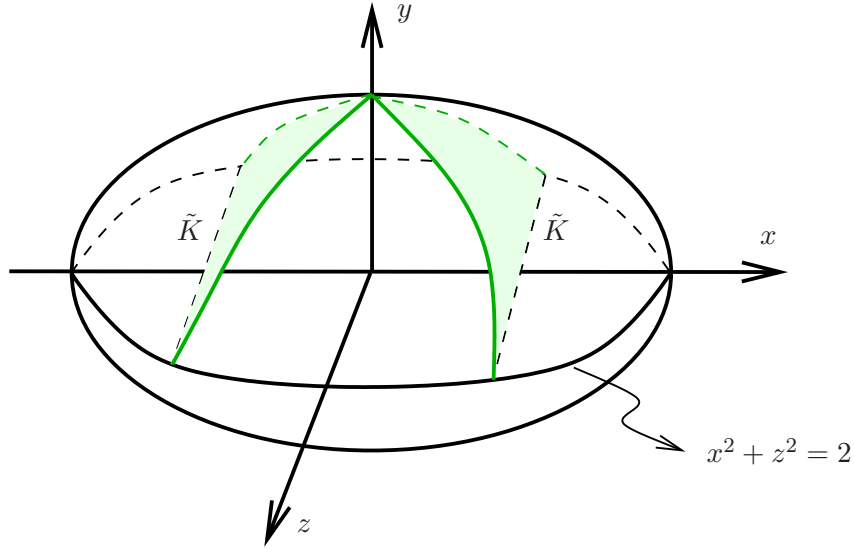}}}

\relabel{1}{$x$}

\relabel{2}{$y$}

\relabel{3}{$z$}

\relabel{4}{$x^2 + z^2=2$}

\relabel{5}{$\tilde{K}$}

\relabel{6}{$\tilde{K}$}

\endrelabelbox
        \caption{The intersection of the $(\pi / 2)$-page of $(\tilde{K}, \tilde{\theta})$  with  $B$}
        \label{ell}
\end{figure}

Let $M_i$ be an arbitrary closed oriented $3$-manifold for $i=1,2$,
and let $(K_i, \theta_i)$ be an open book in $M_i$. Our goal is to
construct an open book $(K, \theta)$ in the connected sum of $M_1$
and $M_2$ such that the page of $(K, \theta)$ is obtained by
plumbing the pages of $(K_1, \theta_1)$ and $(K_2, \theta_2)$.
Suppose that $C_i$ is a properly embedded arc in the page
$\theta_i^{-1} (0) \cup K_i$.  Then $C_i$ has a neighborhood $U_i
\subset M_i$ with an orientation-preserving diffeomorphism $\psi_i:
U_i \to \mathbb{R}^3$, carrying $(K_i \cap U_i , \theta_i|_{U_i})$
to $(\tilde{K}, \tilde{\theta})$ and $C_i$ to the segment $[-1, 1]$
on the $x$-axis.  This last claim follows from two basic facts:

(i)  any locally trivial fibration is trivial over an interval;

(ii)   the geometric monodromy can be assumed to be the identity near the
binding of an open book.

Consequently, the composition:
    \[\rho= \psi_2^{-1} \circ \tilde{\rho} \circ \psi_1: E_1=\psi_1^{-1}
            (E) \to E_2=\psi_2^{-1} (E) \]
is an orientation-reversing
diffeomorphism which can be used to construct the connected sum:
     \[M=    M_1 \# M_2= (M_1 \setminus \mbox{int} (B_1)) \cup_{\rho} (M_2
           \setminus \mbox{int}( B_2)),  \]
 where $B_i = \psi_i^{-1} (B)$.

There is a natural open book $(K, \theta)$ on $M$ which is defined
as follows: Let $K$ be the union of $K_1 \setminus \mbox{int} (B_1)$
and $K_2 \setminus \mbox{int} (B_2)$, which is a link in $M$ because
of the properties of the map $\tilde{\rho}$ discussed above.
Since $\theta_2 \circ \rho (x,y,z) = \theta_1 (x,y,z) + \pi$,
the map $\theta$ defined as $\theta_i +(-1)^{i+1} \pi / 2$ when
restricted to $(M_i \setminus \mbox{int} (B_i)) \setminus K_i $
induces a fibration on $M \setminus K$.

\begin{figure}[ht]
  \relabelbox \small {
  \centerline{\epsfbox{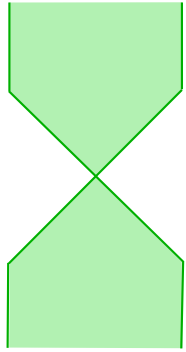}}}

\endrelabelbox
        \caption{$(\tilde{\theta}^{-1}(\pi / 2) \cup \tilde{K}) \setminus \mbox{int}( B)$}
        \label{page}
\end{figure}

To understand the pages of the open book $(K, \theta)$  on
$M$, consider the piece of (non-smooth) surface
$(\tilde{\theta}^{-1}(\pi / 2) \cup \tilde{K}) \setminus
\mbox{int}( B)$ depicted in Figure~\ref{page} (compare with
Figure~\ref{ell}, but beware that we take the complement). Since
$(\tilde{K}, \tilde{\theta})$ is a local model for both open books
$(K_1, \theta_1)$ and  $(K_2, \theta_2)$, we just need to understand
how the pages in two copies of this local model fit together by the
map $\tilde{\rho}: E \to E$. Because of the symmetry of the
construction, $(\tilde{\theta}^{-1}(-\pi / 2) \cup \tilde{K})
\setminus \mbox{int}( B)$ is also homeomorphic to the surface
depicted in Figure~\ref{page}.   These two \emph{oriented}
surfaces-with-boundary can be glued together along parts of their
boundaries, dictated by the map $\tilde{\rho}: E \to E$, to give an
oriented smooth surface with corners as we depicted on the left in
Figure~\ref{plumpages}.

\begin{figure}[ht]
  \relabelbox \small {
  \centerline{\epsfbox{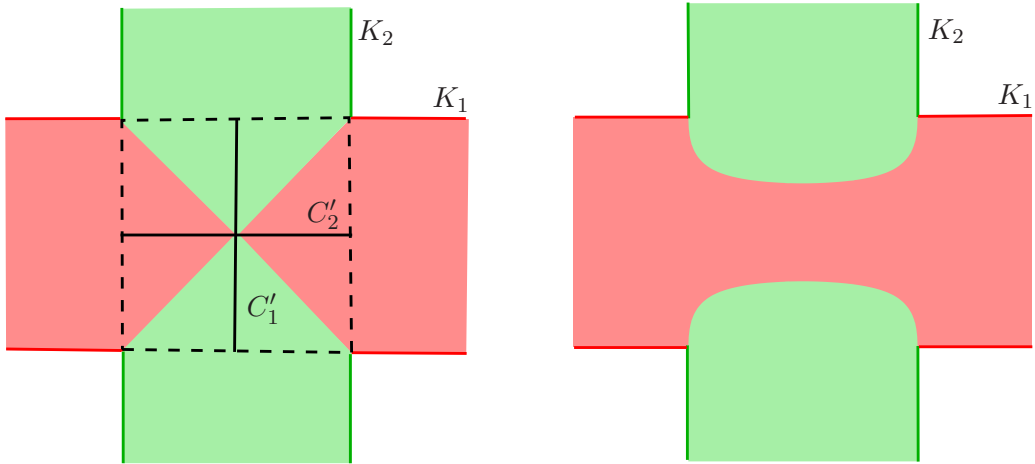}}}

\relabel{1}{$C'_2$} \relabel{2}{$C'_1$}

\relabel{3}{$K_2$} \relabel{4}{$K_1$}

\relabel{5}{$K_2$} \relabel{6}{$K_1$}

\endrelabelbox
        \caption{Local pictures of the pages of $(K, \theta)$: the $0$-page on the left,
            other pages on the  right}
        \label{plumpages}
\end{figure}

This shows that the $0$-page of $(K, \theta)$ can be viewed
 as the plumbing of the $(-\pi/2)$-page of $(K_1, \theta_1)$
 with the $(\pi/2)$-page of $(K_2, \theta_2)$ along the
 neighborhoods of the
 arcs $C'_1$ and $C'_2$ defined by:
       \[ C'_1 = \psi_1^{-1} (C_1) \;\mbox{and}\; C'_2 = \psi_2^{-1} (C_2),  \]
 where:
       \[ C_1= \{ x^2 +y^2  =1, \; y \leq 0, \;z=0\} \; \mbox{and} \; C_2=\{ x^2 +y^2
                    =1, \; y \geq 0, \;z=0\}.  \]

 Similarly, all the other pages of the open book $(K, \theta)$ will appear locally as drawn on the
 right in Figure~\ref{plumpages}, each of which is \emph{globally} diffeomorphic to the $0$-page,
 after smoothing the corners as usual. Hence $\theta: M \setminus K \to \mathbb{S}^1$
 is a locally trivial fibration each of whose fibers is obtained by plumbing a page
 of  $(M_1, \theta_1)$ with a page of  $(M_2, \theta_2)$---which finishes Gabai's proof of Stallings'
 Theorem~\ref{thm:stal}.

\medskip

The proof above can be described with another point of view which
turns out to be more suitable for the generalizations we have in
mind. One can interpret what is inside the domain $B$ in the local
model $(\mathbb{R}^3, \tilde{\theta})$  as the union of two
(overlapping) pieces:
\begin{itemize}
     \item a tubular neighborhood of the intersection $B
\cap \tilde{K}$, which is nothing but two disjoint arcs of the
binding $\tilde{K}$;
      \item a \emph{thickening} of the plumbing region.
 \end{itemize}

\begin{figure}[ht]
  \relabelbox \small {
  \centerline{\epsfbox{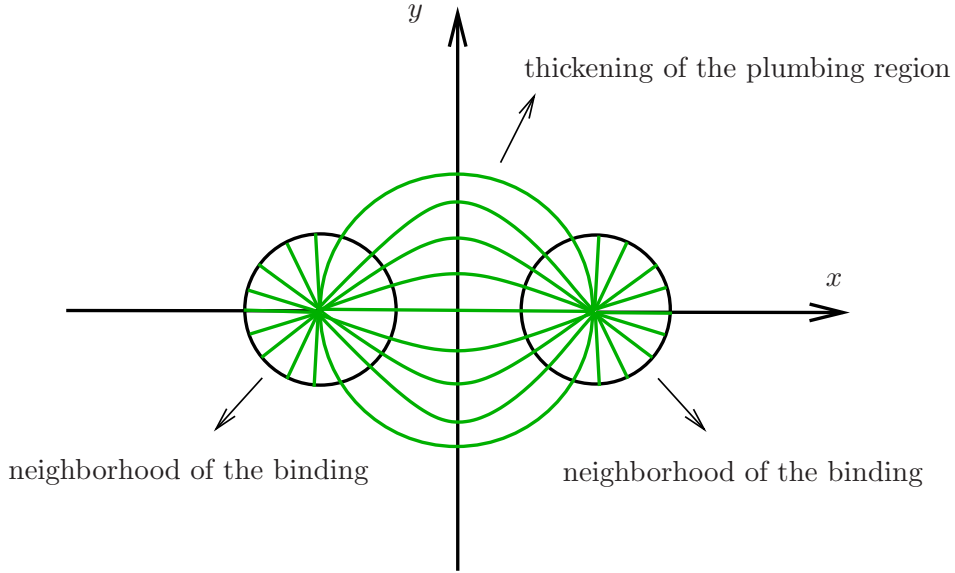}}}

\relabel{1}{$x$}

\relabel{2}{$y$}

\relabel{3}{thickening of the plumbing region}

\relabel{4}{neighborhood of the binding}

\relabel{5}{neighborhood of the binding}

\endrelabelbox
        \caption{A model with truncated pages}
        \label{opbo}
\end{figure}

The thickening (topologically a rectangle times an interval)
consists of a rectangle from each page $\tilde{\theta}^{-1}(t) \cup
\tilde{K}$ for $t \in [-\pi/2, \pi/2] \in \mathbb{S}^1$. To see
this, we slightly truncate the pages of $\tilde{\theta}$ in $B$
corresponding to the arc $[-\pi, -\pi/2] \cup [\pi/2, \pi]$ on
$\mathbb{S}^1$ such that the pages intersect the $xy$-plane as shown
in Figure~\ref{opbo}. In other words, we slightly deform the domain
$B$ keeping all of its symmetries needed in the previous discussion.
Therefore, by removing $B$, we remove the plumbing region from half
of the pages of the open book corresponding to one ``half" of
$\mathbb{S}^1$, along with tubular neighborhood of the two arcs of
the binding.

For the Murasugi sum of two open books, we remove the plumbing
regions from half of the pages in both open books \emph{but these
halves correspond to complementary oriented arcs} on $\mathbb{S}^1$.
(This fact reveals itself in the above proof by the appearance of
the difference $\pi$ in the parametrization of the fibrations to be
glued.) So, when we glue the ambient manifolds after removing
diffeomorphic copies of $B$ from each one of them,  the fibrations
in the complements of the respective bindings will glue together so
that the hole created as a result of removing a rectangle (the
plumbing region)  from any page will be sewn back up by the
rectangle in the  corresponding  page of the complementary
fibration. The way that these rectangles are identified is
equivalent to plumbing, so that the resulting pages are smooth
manifolds. One can also see that the aforementioned tubular
neighborhoods of the arcs on the bindings will indeed disappear in
the process, whereas the rest of the bindings will glue together to
give the new binding in the glued up manifold.

\medskip

There is yet another interpretation of the proof using abstract open
books (see Etnyre \cite[Theorem 2.17]{E 06}).  Given two abstract
open books $(\Sigma_i, \phi_i)$, $i = 1, 2$  (see Remark
\ref{rem:eqfib} (\ref{rem:aob}) below), let $C_i$ be an arc properly
embedded in $\Sigma_i$ and $R_i = C_i \times [-1, 1] \subset
\Sigma_i$ a rectangular neighborhood of $C_i$. The idea of the proof
is to perform a Murasugi sum of the mapping tori $\cM(\Sigma_1,
\phi_1)$ and $\cM(\Sigma_2, \phi_2)$ leaving the bindings out of the
picture at first and then to complete the resulting mapping torus
into an open book of the connected sum of the ambient manifolds.

\begin{figure}[ht]
  \relabelbox \small {
  \centerline{\epsfbox{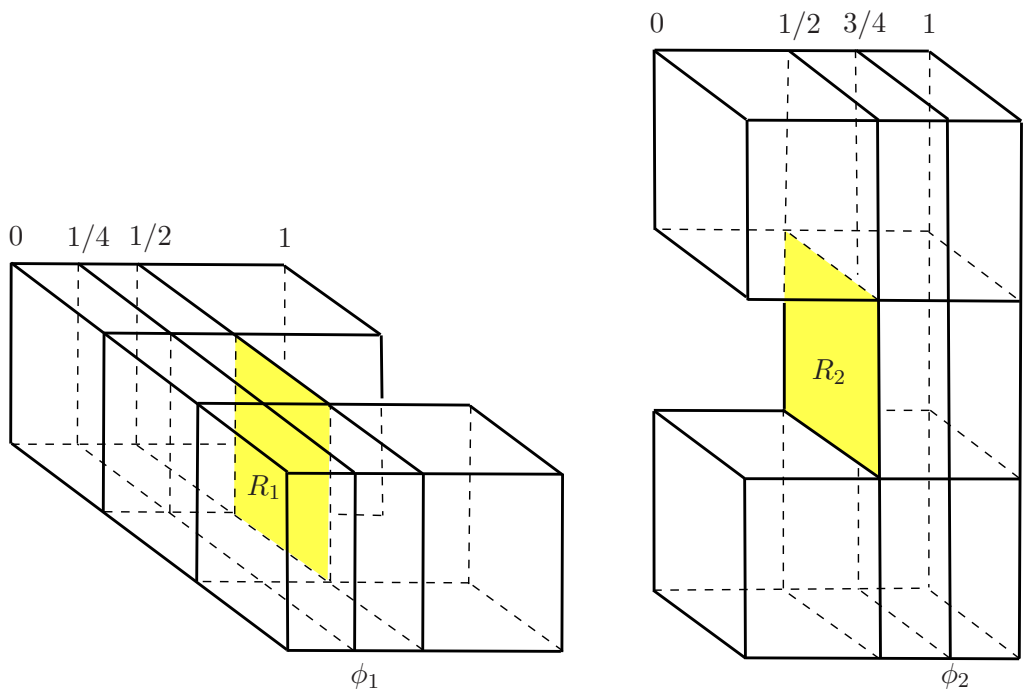}}}
\relabel{1}{$\phi_1$}

\relabel{2}{$\phi_2$}

\relabel{3}{$0$} \relabel{4}{$1/4$} \relabel{5}{$1$}
\relabel{6}{$0$} \relabel{7}{$3/4$} \relabel{8}{$1$}

\relabel{a}{$R_1$}

\relabel{b}{$R_2$}

\relabel{c}{$1/2$}

\relabel{d}{$1/2$}
\endrelabelbox
        \caption{Local pictures of $\cM(\Sigma_1, \phi_1) \setminus B_1$ (on the left) and
         $\cM(\Sigma_2,  \phi_2)\setminus B_2$ (on the right) }
        \label{W}
\end{figure}

Note that $B_1 = R_1 \times [ 1/ 2 , 1]$ is a 3-ball in $\cM(\Sigma_1,
\phi_1)$ and similarly $B_2 = R_2 \times [0, 1/ 2 ]$ is a $3$-ball
in $\cM(\Sigma_2, \phi_2)$. We view the mapping torus $\cM(\Sigma_1,
\phi_1)$ obtained as gluing $\Sigma_1 \times\{0\}$ to
$\Sigma_1\times\{1\}$ using the identity and then cutting the
resulting $\Sigma_1 \times \mathbb{S}^1$ along $\Sigma_1
\times\{1/4\}$ and regluing using $\phi_1$ (see
Figure~\ref{W}). Similarly we view $\cM(\Sigma_2, \phi_2)$ obtained
as gluing $\Sigma_2 \times\{0\}$ to $\Sigma_2\times\{1\}$ using the
identity and then cutting the resulting $\Sigma_2 \times
\mathbb{S}^1$ along $\Sigma_2 \times\{3/4\}$ and regluing using
$\phi_2$.

\begin{figure}[ht]
  \relabelbox \small {
  \centerline{\epsfbox{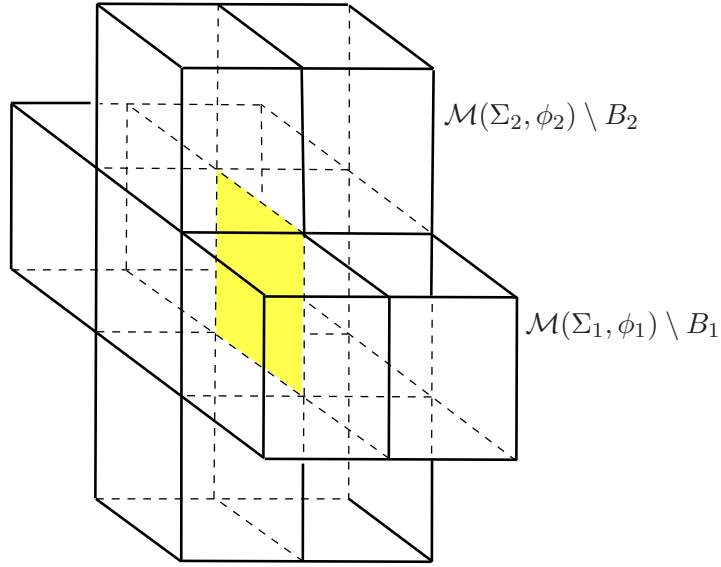}}}

\relabel{1}{$\cM(\Sigma_1, \phi_1) \setminus B_1$}

\relabel{2}{$\cM(\Sigma_2,    \phi_2)\setminus B_2$}

\endrelabelbox
        \caption{ Two ``lego" pieces of Figure~\ref{W} fitting together}
        \label{V}
\end{figure}

Let $\Sigma = \Sigma_1 + \Sigma_2$ denote the Murasugi sum of
$\Sigma_1$ and $\Sigma_2$ along the rectangles $R_1$ and $R_2$. Then
$\cM(\Sigma_1, \phi_1) \setminus B_1$ and $\cM(\Sigma_2, \phi_2)
\setminus B_2$ can be glued together, as illustrated in
Figure~\ref{V},  to induce a mapping torus with page $\Sigma$.
Therefore we conclude that:
     \[   \big(\cM(\Sigma_1, \phi_1) \setminus B_1 \big)\cup
          \big(\cM(\Sigma_2, \phi_2) \setminus B_2\big)= \cM(\Sigma, \phi), \]
where $\Sigma= \Sigma_1 + \Sigma_2$, and $\phi= \phi_1 \circ
\phi_2$. Here we extend $\phi_i$ ($i=1,2$) from $\Sigma_i$ to
$\Sigma$ by the identity map and then compose these extended
diffeomorphisms, which we  still denote by $\phi_i$ on $\Sigma$.  As
a matter of fact, from this monodromical viewpoint ``Murasugi sum"
appears more like a \emph{composition} than a sum.

To show that the mapping torus $\cM(\Sigma, \phi)$ extends to an
open book of the connected sum $M_1 \# M_2$ we proceed as follows
(see Goodman's Thesis \cite[pages 9-10]{Go 03}). First of all, we view
$\Sigma_i$ as a submanifold of $\Sigma$ and identify $R=R_i$, for
$i=1,2$. Then $s_i=:R \cap
\partial \Sigma_i$ is the disjoint union of two properly embedded
arcs in $\Sigma$ such that the set of four points  $\partial
s_1=\partial s_2$ belongs to $\partial \Sigma$.

In the following we present the separating sphere $S$ in the
connected sum $M=M_1 \# M_2$. Let $I_1=[0,1/2]$ and $I_2=[1/2, 1]$.
For each $i=1,2$, consider the disjoint union of two disks $s_i \times I_i  \subset
\Sigma \times I \subset \cM(\Sigma, \phi)$. Let $S'$ be the surface
obtained as the following union of six disks:
     \[ (s_1 \times I_1) \cup (s_2 \times I_2) \cup (R \times \{0\}) \cup (R \times \{1/2\}) \]
in $\cM(\Sigma, \phi)$. Observe that $\partial S' = \bS^1 \times \partial s_1 $. We
can cap off $S'$ with the four disks $ \bD^2 \times \partial s_1 $
to construct the desired sphere $S$ as illustrated in
Figure~\ref{T}.

\begin{figure}[ht]
  \relabelbox \small {
  \centerline{\epsfbox{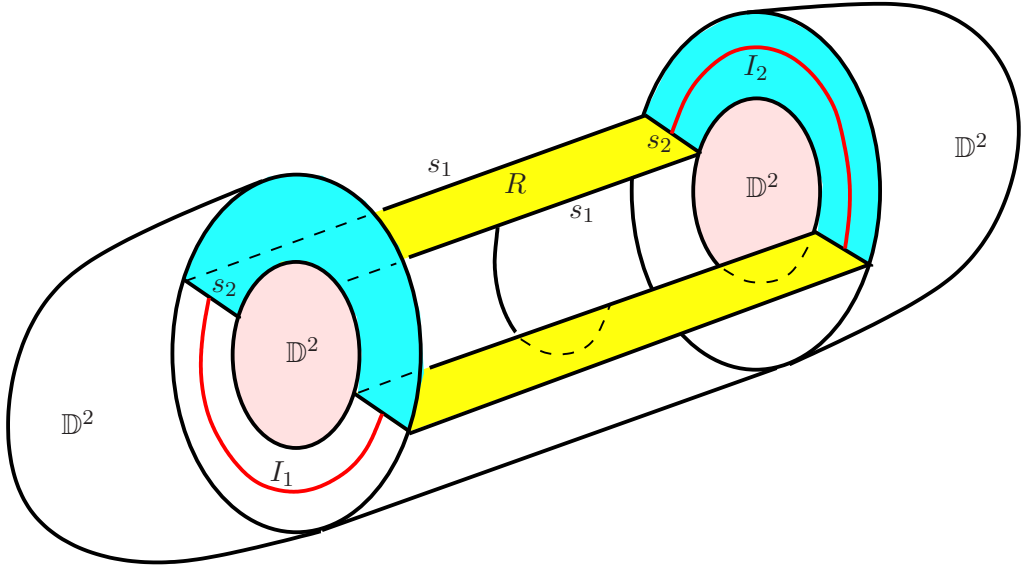}}}

\relabel{1}{$R $}

\relabel{6}{$s_2$}

\relabel{2}{$I_2$}

\relabel{3}{$s_2$}

\relabel{4}{$s_1$} \relabel{7}{$s_1$}

\relabel{5}{$I_1$}

\relabel{a}{$\bD^2$}

\relabel{b}{$\bD^2$}

\relabel{c}{$\bD^2$}

\relabel{d}{$\bD^2$}

\endrelabelbox
        \caption{The four disks used to cap off $S'$ in order to get the sphere $S$}
        \label{T}
\end{figure}

Now we claim that $M \setminus S$ decomposes into $M_1
\setminus B_1$ and $M_2 \setminus B_2$ for some $3$-dimensional
balls $B_1$ and $B_2$. To prove our claim, we note that $\cM(\Sigma,
\phi)= (\Sigma \times I_1) \cup (\Sigma \times I_2)$, where we
identify $\Sigma \times \{1/2\}$ in the first product with $\Sigma
\times \{1/2\}$ in the second product via $\phi_1$ and $\Sigma
\times \{1\}$ with $\Sigma \times \{0\}$ via $\phi_2$. It follows
that, by removing $S$, we have $(\Sigma_1 \sqcup (\Sigma_2 \setminus
R))\times I_1$ glued to $(\Sigma_2 \sqcup (\Sigma_1 \setminus
R))\times I_2$. But since $\phi_1$ is the identity on $\Sigma_2$ and
$\phi_2$ is the identity on $\Sigma_2$, the result can also be
viewed as a union of two pieces  $\cM(\Sigma_1, \phi_1) \setminus (R
\times I_1)$ and $\cM(\Sigma_2, \phi_2) \setminus (R \times I_2)$.

Finally, we insert the binding as follows. Since $\partial
s_i$ is a set of four points in $\partial \Sigma$, the solid torus
$\bD^2 \times \partial \Sigma$ is cut into four pieces along $\bD^2
\times \partial s_i$. Thus by gluing in the binding, we see that $M$
decomposes into two pieces along the sphere $S$:
     \[  \big(\cM(\Sigma_1,
\phi_1)\cup (\bD^2 \times \partial \Sigma_1)\big) \setminus \big((R
\times I_1) \cup (\bD^2 \times s_1)\big)= M_1 \setminus \big((R
\times I_1) \cup (\bD^2 \times s_1)\big)\]
and:
    \[ \big(\cM(\Sigma_2, \phi_2) \cup (\bD^2 \times
\partial \Sigma_2)\big) \setminus \big((R \times I_2) \cup (\bD^2 \times s_2)\big)= M_2
\setminus \big((R \times I_2) \cup (\bD^2 \times s_2)\big). \]
Observe that each $B_i:=(R \times I_i) \cup (\bD^2 \times s_i)$ is a
$3$-ball with boundary $S$.

\medskip
Our paper is motivated by the search of the most general operation
of Murasugi-type sum (that is, embedded Milnor-style plumbing) for
which one has an analog of Theorem \ref{thm:stal}. We figured out
that we do not need to restrict in any way the full-dimensional
submanifolds which are to be identified in the plumbing operation.
That is why we define a general operation of ``summing'' of
manifolds (see Definition \ref{def:absum}), which reduces to the
classical operation of Definition \ref{def:plumb} when the
identified submanifolds have product structures $\bD^n \times \bD^n$.

The greater level of generality obliged us to discard the
special model used in the previous proof. The principle of the proof
of our generalization \ref{thm:genstal} of Gabai's theorem is
instead inspired by Etnyre's interpretation. In this respect, Figure
\ref{V} is to be compared with Figure \ref{O}.

\bigskip
\section{Conventions and basic definitions}
\label{sec:notconv}

In this section we explain our conventions about manifolds,
orientations and coorientations of hypersurfaces. We give rather
detailed explanations because throughout the paper we work without
any assumptions about orientability of the manifolds: the
only important issues are about \emph{coorientations}, which makes
the setting rather non-standard when compared with the usual
literature in differential topology.
\medskip

In this paper, the  manifolds are assumed to be smooth and
\emph{pure dimensional}, but not necessarily orientable or
connected. If a manifold is endowed with an orientation, we
explicitly say that it is an ``\emph{oriented manifold}''. We use
the expression ``\emph{manifold-with-boundary}'' for a smooth
manifold with possibly empty boundary. We denote by $\partial W$ the
boundary of the manifold-with-boundary $W$ and by:
   \[  \mbox{int}(W) := W \setminus     \partial W  \]
its \emph{interior}.

In the sequel, we will implicitly use the facts that \emph{the
corners of a manifold with corners can be smoothed}, and that the
resulting smooth manifold-with-boundary is well-defined up to
isotopy as a zero-codimensional submanifold of the initial manifold
with corners. A standard reference for these folklore facts is the
Appendix of Milnor's paper  \cite{M 59}. We will also use the
folklore fact that \emph{two manifolds-with-boundary can be glued
along compact zero-codimensional submanifolds of their boundaries,
once a diffeomorphism between these submanifolds is fixed}, and that
the result is well-defined up to diffeomorphism. A standard
reference for this is Hirsch's book \cite[Chapter 8.2]{H 76}. All
these facts are also treated in a detailed way by Douady in his
contributions \cite{D 62-1}, \cite{D 62-2}, \cite{D 62-3} to the
Seminar Cartan.

\begin{remark}  \label{rem:simplnot}
    In the sequel, the only gluings to be done will be special cases of identifications
of submanifolds of two manifolds-with-boundary by diffeomorphisms.
In order to simplify the notations, instead of giving different names
to those submanifolds and labeling also the diffeomorphism used for
the gluing, we will assume that the two submanifolds were identified
using that diffeomorphism, which implies that the gluing
diffeomorphism is the identity. For instance, we will not write
``glue $M_1$ to $M_2$ using the diffeomorphism $\phi: P_1 \to P_2$
of $P_i \hookrightarrow M_i$'', but ``glue $M_1$ to $M_2$ along $P
\hookrightarrow M_i$''.
\end{remark}

If $V$ is a submanifold-with-boundary embedded in $W$, then we
use the notation $V \hookrightarrow W$.
 We say that $V$ is {\bf properly embedded} into $W$ if $V \cap \partial W =
 \partial V$ and if $V$ and $\partial W$ are transversal in $W$ everywhere
 along $\partial V$. When $\partial V = \emptyset$, this means simply that
 $V \subset \mbox{int}(W)$. In this paper,  the submanifolds of interest
 are not necessarily properly embedded (for instance, the
pages of an arbitrary open book). If $M \hookrightarrow W$ is a submanifold, we denote by
$\codim_W(M)$ its \emph{codimension} in the ambient manifold $W$.

If $V  \hookrightarrow W$ is properly embedded, we denote by
    $\cU_W(V)$ (or simply $\cU(V)$ if $W$ is clear from the context)
    a closed {\bf tubular neighborhood} of $V$ in $W$ such that
    $\cU_W(V) \cap \partial W$
    is a tubular neighborhood of $\partial V$ in $\partial W$.
    Moreover,  we assume that $\cU_W(V)$ is endowed
    with a structure of smooth fiber bundle over $V$,
    whose fibers are diffeomorphic to compact balls
    of dimension $\codim_W(V)$.

Let us examine the special case of properly embedded
\emph{hypersurfaces}. One has the following well-known proposition:

\begin{proposition}
   Let $M \hookrightarrow W$ be a compact hypersurface-with-boundary
   properly embedded inside the manifold $W$.
  The following conditions are equivalent:
    \begin{enumerate}
        \item  \label{condnorm} the normal bundle $\cN_{M | W}$ of $M$ in $W$ is orientable;
        \item  \label{condprod} $M$ admits a tubular neighborhood diffeomorphic to
            $[-1, 1] \times M$, where $M \hookrightarrow W$ is identified
            with $\{0\} \times M$;
        \item  \label{condreg} each connected component $U_i$ of an arbitrary regular
           neighborhood $\cU_W(M)$ is disconnected by $U_i \cap M$.
    \end{enumerate}
  Moreover, if any of  the conditions above is satisfied, then the following choices are equivalent:
       \begin{enumerate}
        \item[(1')]  an orientation of the normal bundle $\cN_{M | W}$ of $M$ in $W$;
        \item[(2')]  an embedding
            $[-1, 1] \times M \hookrightarrow W$ which sends  $\{0\} \times M$
            to $M$ by $\{0\} \times m \to m$ for any $m \in M$, up to isotopy;
        \item[(3')]  a choice of connected component of $U_i \setminus M$
          for each connected component $U_i$ of a tubular neighborhood
           $\cU_W(M)$.
    \end{enumerate}
    More precisely, the normal vectors pointing towards the positive side for the
    chosen orientation of the normal bundle are tangent to the curves entering into
    $(0, 1] \times M$, which defines the choice of connected component
    of each $U_i$.
\end{proposition}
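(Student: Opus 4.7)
The core observation is that the normal bundle $\cN_{M|W}$ is a real line bundle, since $M$ has codimension one in $W$. Hence $\cN_{M|W}$ is orientable if and only if it is trivializable, and a trivialization corresponds (up to positive scaling in the fibers) to a choice of orientation. My plan is to establish the cycle of implications $(1) \Rightarrow (2) \Rightarrow (3) \Rightarrow (1)$, and then parse each arrow to make the parameter spaces of choices correspond.

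First I would prove $(1) \Rightarrow (2)$ using the tubular neighborhood theorem. Since $M$ is properly embedded, the tubular neighborhood $\cU_W(M)$ of the definition can be taken to be a disc bundle over $M$ whose restriction over $\partial M$ is a tubular neighborhood of $\partial M$ inside $\partial W$. Because $\cN_{M|W}$ is a line bundle, orientability means a nowhere-zero section exists; using this section together with a bundle metric produces a bundle isomorphism of $\cN_{M|W}$ with the trivial bundle $[-1,1] \times M$, and the tubular neighborhood theorem (with boundary, see Hirsch \cite{H 76}) upgrades this to a smooth embedding $[-1,1] \times M \hookrightarrow W$ sending $\{0\} \times m$ to $m$. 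The implication $(2) \Rightarrow (3)$ is immediate: if $\cU_W(M) \cong [-1,1] \times M$, then $\cU_W(M) \setminus M$ is the disjoint union of $[-1,0) \times M$ and $(0,1] \times M$, and these restrict to the two halves of each connected component $U_i$. For $(3) \Rightarrow (1)$, I would argue by contrapositive: if the line bundle $\cN_{M|W}$ is non-orientable, then on some connected component $M_0$ of $M$ it is isomorphic to the tautological (Möbius) line bundle; the corresponding component $U_0$ of $\cU_W(M)$ is diffeomorphic to a twisted interval bundle over $M_0$, and removing the zero section leaves a connected space (a loop along which the orientation flips keeps $U_0 \setminus M$ path-connected).

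For the equivalence of the parametrized choices $(1')$, $(2')$, $(3')$, I would trace through the same implications and keep track of the extra data. An orientation of $\cN_{M|W}$ distinguishes a positive half of each fiber of the disc bundle, which determines the positive direction in the trivialization $[-1,1] \times M \hookrightarrow W$ up to isotopy (changes of bundle metric and positive rescaling of the trivializing section produce an isotopy of embeddings fixing $\{0\} \times M$ pointwise). That embedding in turn distinguishes the component $(0,1] \times M$ of $\cU_W(M) \setminus M$, and hence a component of each $U_i \setminus M$. Conversely, a choice of component in each $U_i \setminus M$ dictates on which side the outward normal vectors should point, determining an orientation of $\cN_{M|W}$; these two maps are inverse to each other, giving the final description of normal vectors as tangent to curves entering $(0,1] \times M$.

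The main technical obstacle is the boundary bookkeeping: since $M$ is only required to be properly embedded and $\partial M \subset \partial W$, all the constructions (tubular neighborhood, trivialization, isotopy of embeddings) must be carried out compatibly with a collar of $\partial W$ so that the product neighborhood restricts to a product neighborhood of $\partial M$ in $\partial W$. This is handled by the relative versions of the tubular neighborhood theorem and the uniqueness-up-to-isotopy theorem (see Hirsch \cite[Chapter 8.2]{H 76} and Douady \cite{D 62-1,D 62-2,D 62-3}), so no essential difficulty arises, only care in the setup. Everything else reduces to the elementary classification of real line bundles by their first Stiefel–Whitney class.
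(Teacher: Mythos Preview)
Your argument is correct and is the standard proof of this well-known fact. Note, however, that the paper does not actually supply a proof of this proposition: it is introduced with the phrase ``One has the following well-known proposition'' and is left unproved, serving only to justify Definition~\ref{def:coorient}. So there is nothing to compare your approach against; what you have written is exactly the kind of routine verification the authors are implicitly deferring to the reader, and your citations of Hirsch and Douady for the relative tubular neighborhood theorem are the same ones the paper invokes elsewhere for analogous folklore.
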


The previous proposition allows us to define:

  \begin{definition}  \label{def:coorient}
 Let $M \hookrightarrow W$ be a properly embedded compact hypersurface-with-boundary.
 It is called {\bf coorientable} if it satisfies any one of the
 equivalent conditions (\ref{condnorm})--(\ref{condreg}) of the previous
 proposition.
  A {\bf coorientation} of $M$ in $W$ is an orientation of the normal
  bundle $\mathcal{N}_{M | W}$ of $M$ in $W$.
    \end{definition}

\begin{example}  Consider a M\" obius band $W$ seen as a non-trivial
    segment-bundle over a circle. Any fiber is coorientable, but no
  section of it is coorientable. \end{example}

Suppose that $W$ is a manifold with nonempty boundary $\partial W$.
Recall that we do not assume orientability of either $W$ or
$\partial W$. Even though $\partial W$ is not properly embedded in
$W$, it has an orientable normal bundle in $W$ and hence we say that
$\partial W$ is coorientable by adapting
Definition~\ref{def:coorient} to this case. Since $\partial W$
is coorientable, then any codimension zero submanifold of $\partial
W$ is coorientable and for each connected component of such a
submanifold of $\partial W$, the two coorientations may be
distinguished as:
  \begin{itemize}
      \item {\bf incoming}, if the corresponding normal vectors point inside $W$;
      \item {\bf outgoing}, if the corresponding normal vectors point outside $W$.
  \end{itemize}

\begin{remark} \label{rem:strictincl}
     In the sequel (see for instance
Definition~\ref{def:cobcorners}) we will not necessarily coorient a
whole boundary component uniformly, but we might have to break it up
by inserting ``corners" as in Figure~\ref{B}. For this reason, we
also  speak about the coorientation of any
full-dimensional submanifold of the boundary.
\end{remark}

If a  manifold-with-boundary $W$ is \emph{oriented}, then for
each connected component of its boundary $\partial W$ we define the
{\bf outgoing orientation} by the rule known as ``outside pointing
normal vector comes first'': a normal vector to $\partial W$
pointing outside of $W$, followed by a positive basis of the tangent
space to $\partial W$, gives a positive basis to the tangent space
of $W$. It is customary to take the outgoing orientation as
the \emph{canonical orientation} induced on $\partial W$. The opposite
orientation of the boundary is the {\bf incoming orientation}.

\begin{example} For each $n \geq 1$, we denote by $\bD^n \subset \R^n$ the compact
unit ball endowed with the restriction of the canonical orientation
of $\R^n$ and by $\bS^{n-1}$ its boundary sphere, endowed with the
associated outgoing orientation.  \end{example}

If $W$ is an oriented manifold-with-boundary and $\partial W$ is
\emph{independently} oriented, then:
 \begin{itemize}
    \item  its {\bf outgoing boundary} $\partial_+ W$ is the union of the connected
        components of $ \partial W  $ which are endowed with the outgoing
        orientation;
    \item   its {\bf incoming boundary} $\partial_- W$ is the union of the connected
        components of $ \partial W $ which are endowed with the incoming orientation.
 \end{itemize}

 We clearly have:
   \[ \partial W = \partial_+  W    \bigsqcup  \:  \partial_- W.  \]

In this case, we see $W$ as a {\bf cobordism from $\partial_- W$ to
$\partial_+ W$} (see Figure \ref{A}).

\begin{figure}[ht]
  \relabelbox \small {
  \centerline{\epsfbox{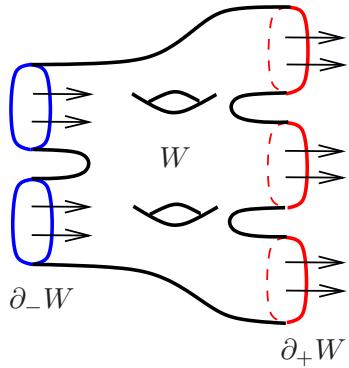}}}

\relabel{1}{$\partial_- W$}

\relabel{2}{$\partial_+ W$}

\relabel{3}{$W$}

\endrelabelbox
        \caption{$W$ is a cobordism from $\partial_- W$ to $\partial_+ W$.}
        \label{A}
\end{figure}

For instance, the interval $[0, 1]$ endowed with its canonical
orientation is a cobordism from the positively oriented point
$\{0\}= \partial_- [0, 1]$ to the positively oriented point $\{1\} =
\partial_+  [0, 1]$. Note that to orient a point means to
choose one of the signs $\pm$ attached to it, which allows us to
speak in this case about positive/negative orientations.

More generally, we will denote by $I$ or $I_j$ ($j$ varying inside
some index set) an {\bf oriented compact interval}, that is, an
oriented compact manifold-with-boundary, diffeomorphic to $[0, 1]
\subset \R$. Its two boundary components will be endowed with their
canonical orientations, therefore we may speak without ambiguity of
the {\bf outgoing point} $\partial_+ I$ and the {\bf incoming point}
$\partial_- I$ of $I$.

\begin{definition}  \label{def:sides}
   Let $M \hookrightarrow W$ be a \emph{properly embedded} and \emph{cooriented}
   compact hyper\-surface-with-boundary.
   A {\bf positive side} of $M$
   is an embedding $I^+ \times M \hookrightarrow W$ such that
   $M \hookrightarrow W$ is identified with
   $\partial_- I^+ \times M$ and the positive normal vectors of $M$
   point into $I^+ \times M$.
   A {\bf negative side} of $M$
   is an embedding $I^- \times M \hookrightarrow W$ such that
   $M \hookrightarrow W$ is identified with
   $\partial_+ I^- \times M$ and the positive normal vectors of $M$
   point outside it. Here both $I^+$ and $I^-$ denote oriented compact intervals.
   A {\bf collar neighborhood} of $M \hookrightarrow W$ is the union of a negative and of a
   positive side of $M$ whose intersection is $M$.
 \end{definition}

In the sequel we will have to work with a more general notion of
cobordism, which is described in the next section.

\bigskip
\section{Cobordisms of manifolds-with-boundary}
\label{sec:cobcorn}

In this section we set up the notation for \emph{cobordisms of
manifolds with boundary}, without the assumption of orientability.
We also introduce \emph{cylinders},
 \emph{cylindrical cobordisms} and \emph{endobordisms} as particular cases of
cobordisms of manifolds with boundary. Moreover, we explain in which
sense the notions
 of endobordism and properly embedded cooriented hypersurface in a manifold-with-boundary
 are equivalent.
 \medskip

In the next definition we extend the notion of cobordism to
situations where:
 \begin{itemize}
    \item  the total manifold is not necessarily orientable;
    \item the incoming and outgoing boundaries are not necessarily closed manifolds;
    \item the total manifold may have boundary components which are not labeled
       as incoming or outgoing.
 \end{itemize}

 What we keep instead from the situation described in the previous section
    is the disjointness of the
two types of boundary regions and the fact that they are of
codimension zero in the boundary of the cobordism.

\begin{definition}  \label{def:cobcorners}
    Let $M^-$ and $M^+$
be manifolds-with-boundary. A {\bf cobordism} $W$ from $M^-$ to
$M^+$ is a manifold-with-boundary $W$, whose boundary is decomposed
as:
     $$\partial W = Y \cup M^- \cup M^+,$$
     where
$Y$ is a nonempty submanifold-with-boundary of $\partial W$ such
that $M^- \cap M^+ =\emptyset $, $Y \cap M^-=
\partial M^-$, $Y \cap M^+=
\partial M^+$ and:
       \begin{itemize}
             \item  $M^-$ is endowed with
               the incoming coorientation, and
            \item  $M^+$ is endowed with
               the outgoing coorientation.
       \end{itemize}
    We say that  $M^{\mp}$ is the {\bf incoming/outgoing boundary region}
      of the cobordism $W$ and set $\partial_{\mp}
    W = M^{\mp}$. We denote this cobordism (of manifolds-with-boundary) by:
         $$W : \partial_- W  \Longmapsto  \partial_+ W.$$

\begin{figure}[ht]
  \relabelbox \small {
  \centerline{\epsfbox{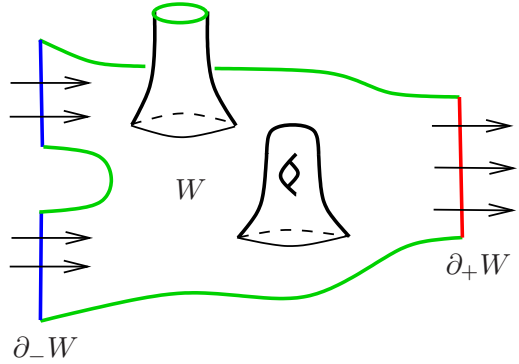}}}

\relabel{1}{$\partial_- W$}

\relabel{2}{$\partial_+ W$}

\relabel{3}{$W$}

\relabel{4}{$Y$}

\endrelabelbox
        \caption{Cobordism of
manifolds-with-boundary $W : \partial_- W  \Longmapsto  \partial_+
W$, where
    $\partial_- W$ is blue, $\partial_+ W$ is red and $Y$ (the rest of
    $\partial W$) is green (colour figure online).}
        \label{B}
\end{figure}

\begin{remark} \label{rem:sameb}
     The definition above is not new
     (see, for example, \cite{BNR 12})
        except for the orientability
assumptions. Strictly speaking, $W$ is a manifold with corners (for
this reason, we called them ``\emph{cobordisms with corners}'' in a
previous version of this paper), but nevertheless, corners along
$\partial (\partial_- W) \:  \sqcup \:
\partial (\partial_+ W)$ may be smoothed. Note that $\partial_- W$
and $\partial_+ W$ may belong to the same connected component of
$\partial W$ after smoothing the corners and also, the boundary of
$W$ may have connected components disjoint from $\partial_- W \sqcup
\partial_+ W$, as illustrated  in Figure \ref{B}.
     \end{remark}

     More generally,
    if one has two manifolds $M^-$ and
    $M^+$ (possibly with boundaries) and fixed diffeomorphisms $M^\pm \simeq \partial _\pm W$,
    we simply say that $W$ is a {\bf cobordism from $M^-$ to $M^+$}
    and  write $W : M^- \Longmapsto M^+$. Note that cobordisms
     can be \emph{composed}: if $W_1 : M_1 \Longmapsto M_2$ and
$W_2 : M_2 \Longmapsto M_3$ are two cobordisms  then their {\bf
composition} $W_2 \circ W_1: M_1 \Longmapsto M_3$ is a cobordism
obtained by gluing $W_1$ and $W_2$ along $M_2$.
\end{definition}

\begin{remark}
   The notion of cobordism of manifolds-with-boundary weakens and
   extends to arbitrary dimension the notion of
   \emph{sutured manifold} introduced in dimension $3$ by Gabai \cite[Definition 2.6]{G 83bis}:
       \begin{quote}
           ``A sutured manifold $(M, \gamma)$ is a compact oriented $3$-manifold
           $M$ together with a set $\gamma \subset \partial M$ of pairwise disjoint
           annuli $A(\gamma)$ and tori $T(\gamma)$. Furthermore, the interior of each
           component of $A(\gamma)$ contains a \emph{suture}, i.e. a homologically
           nontrivial oriented simple closed curve. We denote the set of sutures by
           $s(\gamma)$. Finally every component of $R(\gamma) = \partial M -
           \mbox{int}(\gamma)$
           is oriented. Define $R_+(\gamma)$ (or $R_-(\gamma)$) to be those
           components of $\partial M - \mbox{int}(\gamma)$ whose normal vectors point out
           of (into) $M$. The orientations on $R(\gamma)$ must be coherent with
           respect to $s(\gamma)$, i.e., if $\delta$ is a component of $\partial R(\gamma)$
           and is given the boundary orientation, then $\delta$ must represent the same
           homology class in $H_1(\gamma)$ as some suture.''
       \end{quote}
    A sutured manifold $(M, \gamma)$ as in Gabai's definition is a
    cobordism of manifolds-with-boundary from $R_-(\gamma)$ to
 $R_+(\gamma)$ according
    to our definition. We drop any constraints on the structure of
    the complement of the union of outgoing and incoming boundary regions inside
    the full boundary. Moreover, we do not assume that the ambient manifold is oriented, or even
    orientable. Our definition is also more general than the extension to arbitrary dimensions
    of the notion of sutured manifold, given by Colin-Ghiggini-Honda-Hutchings in
    \cite{CGHH 11}.
\end{remark}

\emph{In the sequel, we will simply write  ``cobordisms'' instead of
``cobordisms of manifolds-with-boundary''.}

\begin{definition} \label{def:maptor}
   If the incoming and the outgoing boundaries $M^-$ and $M^+$ of a cobordism
   $W : M^- \Longmapsto M^+$ are diffeomorphic and a diffeomorphism between
   them is fixed, then
   we say that $W$ is an {\bf endobordism} of $M \simeq M^- \simeq M^+$.
   The {\bf mapping torus} of the endobordism
   $W : M^- \Longmapsto M^+$ is the manifold-with-boundary $T(W)$
   obtained by gluing $M^-$ and $M^+$ using this diffeomorphism. The mapping torus comes
    equipped with a cooriented proper embedding
    $M \hookrightarrow T(W)$, which is the image inside $T(W)$ of the boundary
    manifolds $M^-$ and $M^+$
    which are identified (see Figure \ref{C}).
\end{definition}

In the notation ``$T(W)$'', we suppress for simplicity the
diffeomorphism which identifies the incoming and outgoing
boundaries. Note that it is nevertheless of fundamental importance
for the construction. The reason we chose the name ``mapping torus''
is explained in Remark \ref{rem:eqfib} (\ref{rem:classic}) below.

\begin{figure}[ht]
  \relabelbox \small {
  \centerline{\epsfbox{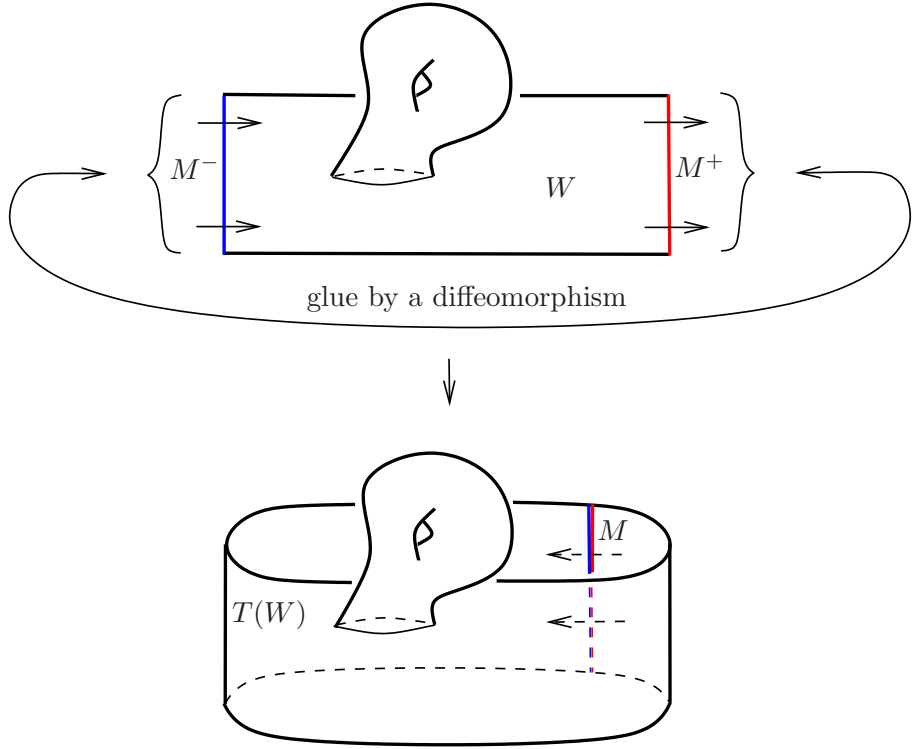}}}

\relabel{1}{glue by a diffeomorphism}

\relabel{2}{$W$}

\relabel{3}{$T(W)$}

\relabel{4}{$M^-$}

\relabel{5}{$M^+$}

\relabel{6}{$M$}

\endrelabelbox
        \caption{Mapping torus of an endobordism}
        \label{C}
\end{figure}

We will be mainly concerned with the following types of
endobordisms:

\begin{definition}  \label{def:cylcob}
       Let $M$ be a manifold-with-boundary.
       A {\bf cylinder} with {\bf base} $M$ is a trivial cobordism
       $W = I \times M$,
       the incoming boundary being $\partial_- I  \times M$ and
       the outgoing one being $\partial_+ I \times M$.
       A {\bf cylindrical cobordism} with {\bf base} $M$ is a cobordism $W$
       from a copy $M^-$ of $M$ to another copy $M^+$ such that  the
       union of connected components of $\partial W$  which intersect
       $M^- \cup M^+$---the {\bf cylindrical boundary}  $\partial_{cyl} W$---is endowed
       with a diffeomorphism (respecting the incoming and outgoing boundary regions)
       to the boundary $\partial( I \times M)= (\partial I \times M) \cup (I \times \partial M)$
       of a cylinder with base $M$.
     The segment $I$ is called the {\bf directing segment} of the cylindrical
       cobordism.
\end{definition}

Note that cylinders with base $M$ are special cases of cylindrical
cobordisms with base $M$, which are special cases of endobordisms of
$M$.

The composition of two cylinders/cylindrical cobordisms with the
same base $M$ is a cylinder/cylindrical cobordisms with base $M$.
 More generally, the composition of two endobordisms of $M$ is again
an endobordism of $M$.

To any \emph{cooriented} and \emph{properly embedded} hypersurface
$M$ of a (not necessarily oriented or even orientable)
manifold-with-boundary is associated canonically (up to
diffeomorphisms) an endobordism with base $M$.

\begin{definition}  \label{def:twocob}
    Let $W$ be a compact manifold-with-boundary and
    let $M \hookrightarrow W$ be a \emph{cooriented}  and \emph{properly embedded}
    compact
    hypersurface-with-boundary.  We view a collar neighborhood
     $[-1, 1] \times M \hookrightarrow W$ of $M$  as the cylinder
    $ Z_{[-1, 1]} : \{-1\} \times M \Longmapsto \{ + 1\} \times M$.  Denote by
    $Z_{[-1, 0]}$ and $Z_{[0, 1]}$ the analogous cylinders corresponding
    to the intervals $[-1, 0]$ and $[0, 1]$, which implies that
    $Z_{[-1, 1]} \simeq Z_{[0, 1]} \circ Z_{[-1, 0]}$.
    Let  $W_M$ be  the
    closure inside $W$ of the complement $W \setminus ([-1, 1] \times M)$.
    We see it as an endobordism  $W_M: \{1\} \times M \Longmapsto \{-1 \} \times
    M$, hence
   the composition $Z_{[-1, 0]} \circ W_M \circ Z_{[0, 1]}$
    is also an endobordism of $M$.
    We call this endobordism the {\bf splitting of $W$ along $M$}
    and denote it  by:
       \[ \Sigma_M(W): M^-    \Longmapsto M^+  \]
    (see Figure \ref{D}), where $M^\mp$ are two copies of $M$.
The natural map:
       \[ \sigma_M: \Sigma_M(W) \to W \]
     is called the {\bf splitting map} of $W$ along $M$ or of $M \hookrightarrow W$.
\end{definition}

\begin{figure}[ht]
  \relabelbox \small {
  \centerline{\epsfbox{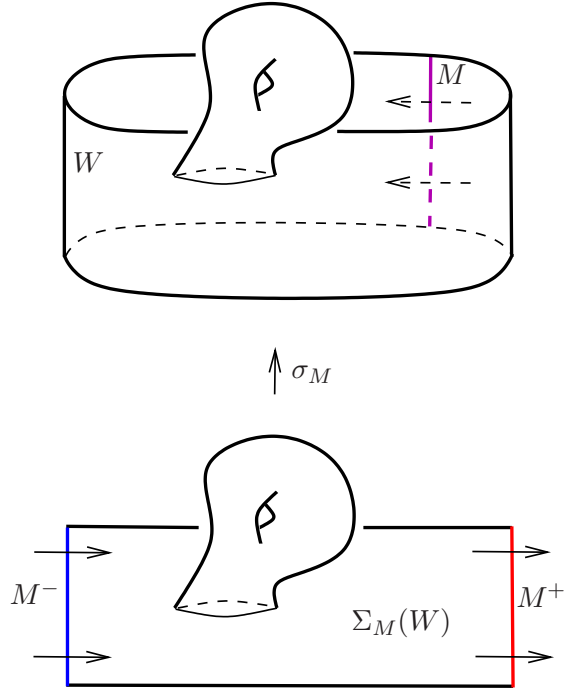}}}

\relabel{1}{$M$}

\relabel{3}{$W$}

\relabel{2}{$\Sigma_M(W)$}

\relabel{4}{$M^-$}

\relabel{5}{$M^+$}

\relabel{6}{$\sigma_M$}

\endrelabelbox
        \caption{Splitting of $W$ along a cooriented properly embedded hypersurface $M$ }
        \label{D}
\end{figure}

Intuitively, one modifies $W$ replacing each point of $M$ by the set
of two orientations of the normal line to $M$ at that point.

\begin{remark} \label{rem:gencut}
    \begin{enumerate}
       \item  \label{rem:orblow}
    The splitting map $\sigma_M$ is a diffeomorphism above $W \setminus M$,
    the preimage of $M$ by $\sigma_M$ being the disjoint union $M^+ \sqcup M^-$ of two
copies of $M$, distinguished canonically as the incoming and the
outgoing boundaries of the cobordism $\Sigma_M(W) : M^- \Longmapsto
M^+$. Both figures \ref{C} and \ref{D} may be seen as graphical representations of
the splitting map  $\sigma_M$. In the first case one starts from the source and in the second
case from the target, before constructing the map $\sigma_M$.

      \item  \label{rem:riemrel}
          The splitting map $\sigma_M$ allows one to
prove that the splitting of $W$ along $M$ is unique up to a unique
diffeomorphism above $W$ (that is, any two such splittings are
related by a unique diffeomorphism compatible with their splitting
maps). One may see $\Sigma_M(W)$ as a generalization of the surface
obtained by cutting a given surface along a properly embedded arc,
an operation fundamental in Riemann's approach of \cite{R 51} to the
topology of surfaces. Another way to model this splitting operation
is to remove a collar neighborhood of $M$. We preferred the previous
definition because of its canonical nature.

     \item  \label{rem:gensplit}
        One could also define a splitting map along
        non-coorientable hypersurfaces. In this case one would not obtain a
        cobordism, because above $M$ the map would restrict to a non-trivial covering
        of degree $2$. We did not define such splittings because we do not
        use them in this paper.
  \end{enumerate}
\end{remark}

We have the following immediate relation between the operations of
taking the mapping torus and of splitting:

\begin{proposition} \label{prop:recext}
     The operations of taking the mapping torus of an endobordism and
     of splitting along a cooriented properly embedded hypersurface
     are inverse to each other.
\end{proposition}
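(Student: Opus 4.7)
The plan is to verify the two compositions separately, exploiting the canonical nature of the collar constructions appearing in both definitions, together with the uniqueness statement recalled in Remark~\ref{rem:gencut}(\ref{rem:riemrel}).

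First I would show that $T(\Sigma_M(W)) \simeq W$ for a cooriented properly embedded hypersurface $M \hookrightarrow W$. By Definition~\ref{def:twocob}, after choosing a collar $[-1,1]\times M$ one has $\Sigma_M(W) = Z_{[-1,0]} \circ W_M \circ Z_{[0,1]}$, whose incoming boundary region $M^-$ is tautologically identified with $\{-1\}\times M$ and whose outgoing boundary region $M^+$ is tautologically identified with $\{1\}\times M$. Both identifications single out the same abstract manifold $M$, and the mapping torus construction glues $M^-$ and $M^+$ via the identity on $M$. This gluing welds $Z_{[-1,0]}$ to $Z_{[0,1]}$ into the full cylinder $[-1,1]\times M$, which is then joined to $W_M$ along $\{-1,1\}\times M$ in precisely the way that reconstitutes the original decomposition $W = W_M \cup ([-1,1]\times M)$. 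The embedded hypersurface inside $T(\Sigma_M(W))$ provided by Definition~\ref{def:maptor} is the image of $M^\pm$, which corresponds to $\{0\}\times M \subset [-1,1]\times M \subset W$; that is, to $M$ itself with its original coorientation (the positive normal points from $\{0\}\times M$ into $(0,1]\times M$, matching the incoming/outgoing assignment in $\Sigma_M(W)$).

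Next I would show $\Sigma_M(T(W)) \simeq W$ for an endobordism $W : M^- \Longmapsto M^+$ with fixed identifications $M^\pm \simeq M$. The incoming and outgoing coorientations of $M^\pm$ in $\partial W$ produce, by Definition~\ref{def:sides}, canonical one-sided collars $[0,1]\times M^+ \hookrightarrow W$ and $[-1,0]\times M^- \hookrightarrow W$. Once the boundaries are identified in $T(W)$, these two one-sided collars glue along $\{0\}\times M$ into a genuine collar $[-1,1]\times M$ of the embedded hypersurface $M \hookrightarrow T(W)$, and its coorientation is exactly the one carried over from the cobordism structure of $W$. Using this particular collar to carry out the splitting in Definition~\ref{def:twocob}, the complement $T(W)_M$ is literally $W$ with its two collars removed, and the recomposition $Z_{[-1,0]} \circ T(W)_M \circ Z_{[0,1]}$ puts the removed collars back onto $W$, yielding a manifold canonically diffeomorphic to $W$ as an endobordism of $M$. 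A different choice of collar inside $T(W)$ gives a splitting related to this one by the unique diffeomorphism over $T(W)$ mentioned in Remark~\ref{rem:gencut}(\ref{rem:riemrel}), so the conclusion is independent of that choice.

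The only genuine subtlety, and the step I would handle with most care, is the bookkeeping of coorientations and of the incoming/outgoing labels: one must check that the canonical coorientation of $M \hookrightarrow T(W)$ coming from Definition~\ref{def:maptor} is compatible with the one prescribed by the collar reconstructed from $W$, so that $M^-$ gets back its incoming label and $M^+$ its outgoing one. This is immediate from Definition~\ref{def:sides}, since the incoming coorientation of $M^-$ in $\partial W$ means that the positive normal points into $[-1,0]\times M^-$, which is exactly the side of $M$ in $T(W)$ sent to the negative side in the splitting, and symmetrically for $M^+$. Once this correspondence is fixed, both compositions $T \circ \Sigma_M$ and $\Sigma_M \circ T$ are identities up to the natural diffeomorphisms, so the two operations are mutually inverse.
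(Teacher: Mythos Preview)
The paper does not actually prove this proposition: it is introduced with the phrase ``We have the following immediate relation\ldots'' and then stated without argument. Your detailed verification of both compositions is therefore more than the paper supplies, and it is correct in substance.

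One small labeling slip worth fixing: in your first paragraph you identify the incoming boundary $M^-$ of $\Sigma_M(W)$ with $\{-1\}\times M$ and the outgoing boundary $M^+$ with $\{1\}\times M$. But from the composition $\Sigma_M(W)=Z_{[-1,0]}\circ W_M\circ Z_{[0,1]}$ (read right to left), the incoming boundary is the incoming boundary of $Z_{[0,1]}$, namely $\{0\}\times M$, and the outgoing boundary is the outgoing boundary of $Z_{[-1,0]}$, again $\{0\}\times M$; these are the two copies of $M$ sitting on either side of the cut. Your conclusion is unaffected: gluing these two copies of $\{0\}\times M$ via the identity on $M$ is precisely what reconstitutes the collar $[-1,1]\times M$ inside $W$ and hence recovers $W$ with its embedded cooriented $M$, as you claim. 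The second half of your argument and the coorientation bookkeeping are fine as written.
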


\bigskip
\section{Seifert hypersurfaces and open books}
\label{sect:Seifsect}

In this section we introduce a notion of \emph{Seifert hypersurface} and we
explain  in which sense it is equivalent to the notion of cylindrical cobordism introduced
  in the previous section. We conclude by treating the special case of Seifert
  hypersurfaces which are pages of open books.
  \medskip

Assume that $M$ is still a cooriented compact
hypersurface-with-boundary in $W$, but which is \emph{not} properly
embedded. Instead, we require $M$ \emph{to be contained in the
interior of $W$}. In order to write more concisely, we introduce a
special name for such hypersurfaces:

\medskip
\begin{definition}  \label{def:seifhyp}
   Let $W$ be a manifold-with-boundary.
   A compact hypersurface-with-boundary   $M \hookrightarrow W$ is  a
   {\bf Seifert hypersurface} if:
      \begin{itemize}
          \item the boundary of each connected component of $M$ is non-empty;
          \item $M \hookrightarrow \mbox{int}(W)$;
          \item $M$  is cooriented.
      \end{itemize}
\end{definition}

\begin{remark}
   Traditionally,  a \emph{Seifert surface} is defined as an oriented surface
   embedded in $\bS^3$, whose boundary is an oriented link $L$ which one wants
   to study. Seifert surfaces are often used algebraically through their
   associated \emph{Seifert forms}. To define the Seifert form,
   one needs to choose a positive side of the Seifert surface, to push some $1$-cycles off the
   surface towards that direction and to compute some linking numbers.
   An important ingredient in this construction is the
   \emph{coorientation} of
the Seifert surface, which is canonically  determined by the
orientation of $L$ and $\bS^3$.
   For this reason, we have decided to extend this aspect of Seifert surfaces
   in $\bS^3$ to a general definition, that also subsumes Lines' Definition
   \ref{def:simpleknot}.
 \end{remark}

There is a canonical way to associate to a Seifert hypersurface $M$
of $W$ a \emph{cooriented} and \emph{properly embedded}
hypersurface-with-boundary in a new manifold (see Definition
\ref{def:extnonprop}). But in order to achieve this, one has first
to ``pierce'' $W$ along $\partial M$. We will define this piercing
procedure using special trivialized tubular neighborhoods of
$\partial M \hookrightarrow W$:

\begin{definition}  \label{def:prodneighb}
   Let $W$ be a manifold-with-boundary and let $M \hookrightarrow W$
   be a Seifert hypersurface.
   A tubular neighborhood $\mathcal{U}_W(\partial M)$ of
   $\partial M \hookrightarrow W$ is called {\bf adapted to} $M$ if it is
   endowed with a product structure $\bD^2 \times \partial M$ such that
   $M$ intersects it along $[0,1] \times \partial M$ (where $[0, 1] \hookrightarrow \bD^2$ is the
   canonical embedding) and if the canonical
   orientation of $\partial \bD^2$ coincides with the given coorientation
   of $M$ in $W$. The composition of the first projection
   $\mathcal{U}_W(\partial M) \setminus \partial M
   \simeq (\bD^2 \setminus \{0\}) \times \partial M \to \bD^2 \setminus \{0\}$
   with the angular coordinate $\theta: \bD^2 \setminus \{0\}  \to \bS^1$
   is called an {\bf angular coordinate of $\partial M$ adapted to $M$}
   (see Figure \ref{E}).
\end{definition}

\begin{figure}[ht]
  \relabelbox \small {
  \centerline{\epsfbox{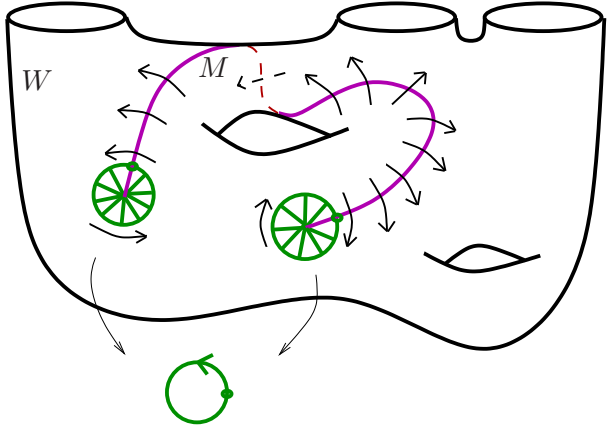}}}

\relabel{1}{$M$}

\relabel{2}{$W$}

\endrelabelbox
        \caption{Angular coordinate of $\partial M$ adapted to $M$}
        \label{E}
\end{figure}

An adapted tubular neighborhood of the boundary of a Seifert
hypersurface always exists and is unique up to isotopy. The
reason is that the normal bundle $\mathcal{N}_{\partial M \mid W}$ of
$\partial M$ in $W$ is canonically trivialized up to homotopy, by
taking as a first section a nowhere vanishing incoming vector field
on $M$ along $\partial M$ and as an independent section a positively
normal vector field of $M$ along $\partial M$ (recall the
fundamental hypothesis that $M$ is \emph{cooriented}).

We want to pierce or blow-up $W$ in an oriented way along $\partial
M$. We will define this operation using the following local model to
be used in each fiber of an adapted tubular neighborhood:

\begin{definition}   \label{def:polblow}
    The {\bf radial blow-up} of $\bD^2$ is the map $\pi_0: [0, 1] \times \bS^1 \to \bD^2$
    which expresses the cartesian coordinates on  $\bD^2$ in terms of polar ones:
       \[   (r, \theta) \mapsto (r \cos \theta, r \sin \theta). \]
\end{definition}

One may perform the radial blow-up operation fiberwise in an
adapted tubular neighborhood of a Seifert hypersurface:

\begin{definition}  \label{def:polarblow}
    Let $W$ be a manifold-with-boundary and let $M \hookrightarrow W$
   be a Seifert hypersurface. Let $\bD^2 \times \partial M  \hookrightarrow W$
   be a tubular neighborhood of $\partial M$ adapted to $M$. Let $\Pi_{\partial M}(W)$
   be the manifold obtained as the union
   of  $W \setminus \partial M$ and $[0, 1] \times \bS^1 \times \partial
   M$, where  $(\bD^2 \setminus \{0\}) \times \partial M$ in  $W \setminus \partial M$ is
   identified with $(0, 1] \times \bS^1 \times \partial M$ in
   $[0, 1] \times \bS^1 \times \partial M$ through
   the diffeomorphism $\pi_0 \times \mbox{id}_{\partial M}$.
   The {\bf radial blow-up of $W$ along $\partial M$} is the map:
    \[Ê\pi_{\partial M}:  \Pi_{\partial M}(W) \to W \] described as follows:  $\pi_{\partial M}$
    is just the the inclusion map
    on $W \setminus \partial M$ and is given by  $\pi_0 \times \mbox{id}_{\partial M}$
    on $[0, 1] \times \bS^1 \times \partial
    M$. We also say that  $\Pi_{\partial M}(W)$ is obtained by
    {\bf piercing $W$ along
   $\partial M$}. The {\bf strict transform} $M'$ of $M$ by $\pi_{\partial M}$ is
    the closure of $(\pi_{\partial M})^{-1}(\mathrm{int}(M))$ inside  $\Pi_{\partial M}(W)$.
\end{definition}

The operation of radial blow-up is also called \emph{oriented
blow-up} in the literature, but under that name it is in general
used in the semi-algebraic category. Intuitively, $W$ is modified by
replacing each point of $\partial M$ by the circle of oriented lines
passing through the origin of the normal plane to $\partial M$ at
that point. We have the following easy lemma:

\begin{lemma}  \label{lem:proptransf}
    The radial blow-up map $\pi_{\partial M}$ is proper and a diffeomorphism
    above  $W \setminus M$.
    The restriction $\pi_{\partial M}|_{M'} : M' \to M$ is a diffeomorphism.
\end{lemma}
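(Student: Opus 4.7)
My plan is to reduce everything to an explicit computation in the adapted tubular neighborhood $\bD^2 \times \partial M$, and then patch with the tautological observation that $\pi_{\partial M}$ is by construction the inclusion on $W \setminus \partial M$.

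First, for properness, I would observe that properness is a local property on the target. Over any compact $K \subset W \setminus \partial M$, the preimage is $K$ itself, since the map is the inclusion there. For a point $p \in \partial M$, I would take a compact adapted neighborhood of the form $\overline{\bD^2_\varepsilon} \times K$ with $K \subset \partial M$ a compact neighborhood of $p$; its preimage inside the blow-up chart is $[0,\varepsilon] \times \bS^1 \times K$, which is compact. A finite subcover argument then handles arbitrary compacta in $W$. The second assertion, that $\pi_{\partial M}$ is a diffeomorphism above $W \setminus M$, is immediate: since $\partial M \subset M$, one has $W \setminus M \subset W \setminus \partial M$, and on $W \setminus \partial M$ the radial blow-up map is literally the identity inclusion.

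For the third statement, I would compute $M'$ in coordinates. In the adapted chart, $M$ meets $\bD^2 \times \partial M$ along $[0,1] \times \partial M$ (where $[0,1] \hookrightarrow \bD^2$ is the canonical embedding, identified with the positive $x$-axis), so $\mathrm{int}(M) \cap ((\bD^2 \setminus \{0\}) \times \partial M) = (0,1] \times \{0\} \times \partial M$. Under $\pi_0 \times \mathrm{id}_{\partial M}$, the preimage of this set is exactly $(0,1] \times \{0\} \times \partial M \subset [0,1] \times \bS^1 \times \partial M$. Taking closure inside $\Pi_{\partial M}(W)$ adjoins the fiber direction $\{0\} \times \{0\} \times \partial M$, yielding
\[
 M' \cap ([0,1] \times \bS^1 \times \partial M) \;=\; [0,1] \times \{0\} \times \partial M.
\]
This is a smooth submanifold-with-boundary which glues smoothly along $(0,1] \times \{0\} \times \partial M$ with $\mathrm{int}(M) \hookrightarrow W \setminus \partial M$, and the restriction of $\pi_{\partial M}$ sends $(r,0,m) \mapsto (r\cos 0, r\sin 0, m) = (r,0,m)$. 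Under the identification of $[0,1] \times \{0\} \times \partial M$ with the collar of $\partial M$ in $M$ coming from Definition \ref{def:prodneighb}, this is the identity, so $\pi_{\partial M}|_{M'}$ is a smooth bijection whose inverse is read off the same formulas.

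I do not anticipate a real obstacle: the whole content is a routine unpacking of the local model. The only point requiring a moment's attention is the consistency of the smooth structure on $M'$ at the transition between the two charts, which is automatic because $\pi_0$ is a diffeomorphism off the exceptional fiber $\{0\} \times \bS^1$ and the gluing in Definition \ref{def:polarblow} was made through that diffeomorphism.
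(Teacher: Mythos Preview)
Your proposal is correct; the paper itself gives no proof for this lemma, simply introducing it with ``We have the following easy lemma'' and moving on, so there is nothing to compare against beyond noting that your routine unpacking of the local model is exactly what the authors had in mind. One small observation: you correctly point out that the map is in fact a diffeomorphism over the larger set $W \setminus \partial M$, so the lemma's assertion ``above $W \setminus M$'' is weaker than what is actually true---this is worth keeping in mind but does not affect the validity of your argument.
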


In the sequel,
    we will identify $M$ and $M'$ using this diffeomorphism, which will allow us to
    speak about the embedding $M \hookrightarrow  \Pi_{\partial M}(W)$. This embedding
    is cooriented (by the lift of the coorientation of $M$ in $W$) and proper.

\begin{figure}[ht]
  \relabelbox \small {
  \centerline{\epsfbox{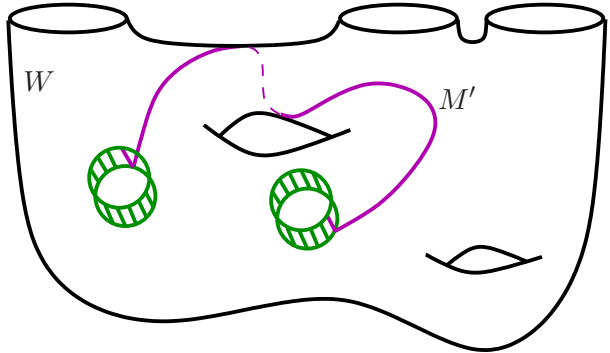}}}

\relabel{1}{$M'$}

\relabel{2}{$W$}

\endrelabelbox
        \caption{The radial blow-up of the surface $W$ of Figure \ref{E}
        along the end points of the arc $M$, and the strict transform $M'$ of $M$.
      The two green cylinders are attached transversely to $W$ (in the usual $3$-dimensional space),
      where we first remove two disks from $W$ (colour figure online).}
        \label{Eblow}
\end{figure}

\begin{remark}   \label{rem:abpol}
   \begin{enumerate}
      \item \label{rem:transprop}
          The radial blow-up  allows us to pass from a Seifert hypersurface
          to a properly embedded cooriented hypersurface in the pierced manifold.
      \item \label{rem:genpol}
           This remark is to be compared with Remark \ref{rem:gencut} (\ref{rem:gensplit}).
           One could define an analogous operation of radial blow-up along
          an arbitrary submanifold of codimension $2$, as one does not need
          to have a globally trivial fibered tubular neighborhood in order to do
          fiberwise radial blow-ups of the centers of the discs. Nevertheless, we
          introduced this more restricted definition, as the only one which is
          needed in the paper.
   \end{enumerate}
 \end{remark}

As $M$ is cooriented and properly embedded in  $\Pi_{\partial M}(W)$, one may consider
the splitting $\Sigma_{M}( \Pi_{\partial M}(W))$, as introduced in Definition
\ref{def:twocob}:

\begin{definition}  \label{def:extnonprop}
     Let $W$ be a manifold-with-boundary and let $M \hookrightarrow W$
   be a Seifert hypersurface.  The {\bf splitting of $W$ along $M$}, denoted
   $\Sigma_M(W)$, is defined as the splitting  $\Sigma_{M}( \Pi_{\partial M}(W))$
   of the properly embedded
   hypersurface $M \hookrightarrow  \Pi_{\partial M}(W)$.
 It is therefore  an endobordism of $M$. The composition
 $\pi_{\partial M} \circ \sigma_M :  \Sigma_M(W) \to W$ is called the {\bf splitting map}
 of $W$ along $M$   (Fig. \ref{Esplit}).
\end{definition}

\begin{figure}[ht]
  \relabelbox \small {
  \centerline{\epsfbox{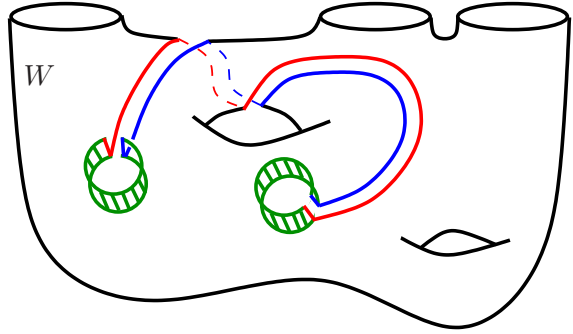}}}

\relabel{2}{$W$}

\endrelabelbox
        \caption{The splitting of $W$
along $M$ for the pair $(W,M)$ of Figure \ref{E}. Recall that the
intermediate radial blow-up is drawn in Figure \ref{Eblow}.}
        \label{Esplit}
\end{figure}

\begin{remark}   \label{rem:cylcob}
    This remark is a continuation of Remark \ref{rem:gencut} (\ref{rem:riemrel}).
    Riemann explained that one has to cut a surface along a curve which goes
    from the boundary to the boundary. This is the operation we modeled in arbitrary
    dimensions by Definition \ref{def:twocob}. He added that if the surface has
    no boundary, then one has first to pierce it, creating like this an infinitely small
    boundary, and then one may cut it along a curve going from this boundary to
    itself. This is the operation we modeled in arbitrary dimensions in Definition
    \ref{def:extnonprop}. We first ``pierced'' $W$ along the boundary of $M$
    (Definition \ref{def:polarblow}), and then we were able to
    apply Definition  \ref{def:twocob}.
\end{remark}

One has the following immediate observation, consequence of the fact
that one gets a segment by splitting a circle at a point. This
observation is nevertheless very important for the sequel. Recall
that both notions of cylindrical cobordisms and of their cylindrical
boundaries were introduced in Definition \ref{def:cylcob}:

\begin{lemma}  \label{lem:obscyl}
      If $M \hookrightarrow W$ is a Seifert hypersurface, then the
      splitting $\Sigma_M(W)$ is a cylindrical cobordism whose cylindrical boundary
      is given by:
          \[ \sigma_M^{-1} (M \cup \pi_{\partial M}^{-1}(\partial   M)).\]
\end{lemma}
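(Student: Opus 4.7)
The plan is to verify the claim locally near $\partial M$, since away from $\partial M$ the Seifert hypersurface $M$ is already properly embedded in $\mathrm{int}(W) = \mathrm{int}(\Pi_{\partial M}(W))$, and the usual splitting of Definition \ref{def:twocob} contributes the two copies $M^-, M^+$ as the incoming and outgoing boundary regions. The cylindrical structure is invisible there; what is at stake is only what happens in a neighborhood of $\partial M$, which is where the ``lateral'' part $I\times \partial M$ of a cylindrical boundary must arise.

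First I would fix an adapted tubular neighborhood $\bD^2\times \partial M \hookrightarrow W$ as in Definition \ref{def:prodneighb} (guaranteed to exist and be unique up to isotopy by the paragraph following that definition), so that $M$ locally becomes $[0,1]\times \partial M$ sitting at angular coordinate $\theta=0$, cooriented by increasing $\theta$. Applying Definition \ref{def:polarblow} fiberwise, the radial blow-up replaces this chart by $[0,1]\times \bS^1\times \partial M$, where the first factor is the radial coordinate $r$. In this chart the exceptional locus $\pi_{\partial M}^{-1}(\partial M)$ is $\{0\}\times \bS^1\times \partial M$, and by Lemma \ref{lem:proptransf} the strict transform of $M$ sits as $[0,1]\times \{0\}\times \partial M$, meeting the new boundary component precisely along the circle fibers' basepoints $\{0\}\times\{0\}\times \partial M$ (which we identify with $\partial M \subset M$).

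Next I would extend the chosen collar $[-1,1]\times M$ of $M$ in $\Pi_{\partial M}(W)$ compatibly, so that near $\partial M$ it looks like $[0,1]\times (-\varepsilon,\varepsilon)\times \partial M\subset [0,1]\times \bS^1\times \partial M$, the coordinate on $(-\varepsilon,\varepsilon)\subset \bS^1$ being the angular parameter and the sign being chosen to match the coorientation of $M$. Splitting along $M$ as in Definition \ref{def:twocob} then amounts locally to cutting $\bS^1$ at $\theta=0$: the local model of $\Sigma_M(\Pi_{\partial M}(W))$ near $\partial M$ becomes $[0,1]\times [0,2\pi]\times \partial M$, in which $\{0,2\pi\}\times \partial M\subset \bS^1$ is identified with the two copies of $\partial M$ lying in $M^-$ and $M^+$ respectively. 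In particular, $\sigma_M^{-1}$ of the exceptional locus is $\{0\}\times [0,2\pi]\times \partial M$, an interval bundle over $\partial M$ whose two endpoint sections coincide with $\partial M^-$ and $\partial M^+$.

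Gluing this local description to the standard one away from $\partial M$, the preimage $\sigma_M^{-1}(M \cup \pi_{\partial M}^{-1}(\partial M))$ is precisely
\[ M^- \, \sqcup \, M^+ \, \cup \, I \times \partial M, \]
the interval factor being the cut circle, attached to $M^\pm$ along the two copies $\partial_\mp I\times \partial M$ of $\partial M$. This is canonically diffeomorphic (respecting incoming/outgoing boundary regions) to $\partial(I\times M) = (\partial I\times M)\cup (I\times \partial M)$, which is exactly the data required by Definition \ref{def:cylcob} for $\Sigma_M(W)$ to be a cylindrical cobordism with base $M$. The only non-routine step is the fiberwise identification of the cut circle with the directing segment $I$; this is controlled by the fact that an adapted tubular neighborhood is unique up to isotopy, so the identification is well-defined up to isotopy and independent of all the choices made.
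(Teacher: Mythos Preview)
Your argument is correct and follows the same idea the paper invokes: the paper states the lemma as an ``immediate observation, consequence of the fact that one gets a segment by splitting a circle at a point,'' and you have simply made this explicit by working in the adapted tubular neighborhood, where the radial blow-up produces the $\bS^1$ factor and the splitting along the strict transform cuts it to the directing segment. One minor slip: when you write that $I\times\partial M$ is ``attached to $M^\pm$ along the two copies $\partial_\mp I\times\partial M$,'' the subscripts should be $\partial_\pm$ rather than $\partial_\mp$, since $M^\pm$ corresponds to $\partial_\pm I\times M$ in the cylinder model.
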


Assume conversely that $W : M^- \Longmapsto M^+$ is a cylindrical
cobordism with base $M$, its cylindrical boundary being identified
with $\partial( I \times M)$. Fix an orientation-preserving
identification of $\bS^1$ with the circle obtained from $I$ by
 gluing $\partial_- I$ and $\partial_+ I$. One identifies therefore
to $\bS^1 \times \partial M$ the image of the cylindrical boundary inside the mapping torus
$T(W)$. This allows us to define:

\begin{definition} \label{def:circol}
  Let $W : M^- \Longmapsto M^+$ be a cylindrical cobordism with base $M$.
  Its {\bf circle-collapsed mapping torus} $T_c(W)$ is obtained from the
  mapping torus $T(W)$ by collapsing the circle $\bS^1 \times \{m \}$ to
  $\{0\} \times  \{m \}$, for all $m \in \partial M$.
  The {\bf Seifert hypersurface associated to the cylindrical cobordism $W$} is
  the natural embedding $M  \hookrightarrow T_c(W)$.
\end{definition}

We have the following analog of Proposition \ref{prop:recext}:

\begin{proposition}  \label{prop:invop}
    The operations of taking the circle-collapsed mapping torus
    of a cylindrical cobordism and of splitting along a Seifert  hypersurface
     are inverse to each other.
\end{proposition}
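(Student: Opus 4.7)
The plan is to deduce this proposition from Proposition \ref{prop:recext} by factoring each of the two operations into two elementary steps and observing that the corresponding elementary steps are inverses of each other.

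\medskip

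First I would observe that the splitting of a Seifert hypersurface, as set up in Definition \ref{def:extnonprop}, is by construction a composition of two operations: (a) the radial blow-up $\pi_{\partial M}: \Pi_{\partial M}(W) \to W$ along $\partial M$, producing a cooriented, properly embedded hypersurface $M \hookrightarrow \Pi_{\partial M}(W)$; and (b) the splitting $\Sigma_M$ of this properly embedded hypersurface, as in Definition \ref{def:twocob}. Symmetrically, starting from a cylindrical cobordism $W : M^- \Longmapsto M^+$, the formation of $T_c(W)$ factors as (a$'$) the formation of the mapping torus $T(W)$, yielding a cooriented properly embedded hypersurface $M \hookrightarrow T(W)$ with a distinguished boundary torus $\bS^1 \times \partial M$, followed by (b$'$) the circle-collapse along $\partial M$. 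Proposition \ref{prop:recext} already tells us that the operations (b) and (a$'$) are inverse to each other, once we check that they preserve the extra structure carried here (the cylindricity of the cobordism and the toric boundary component of the ambient manifold).

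The main technical step, and the step I expect to be the key obstacle, is to show that the pair of operations (a) (radial blow-up) and (b$'$) (circle-collapse) are inverse to each other. This is a purely local statement in a tubular neighborhood of $\partial M$. Using an adapted tubular neighborhood $\bD^2 \times \partial M \hookrightarrow W$ as in Definition \ref{def:prodneighb}, the radial blow-up replaces this neighborhood by $[0,1] \times \bS^1 \times \partial M$, glued to $W \setminus \partial M$ via $\pi_0 \times \mathrm{id}_{\partial M}$. The circle-collapse is precisely the inverse cartesian-polar identification: it contracts $\{0\} \times \bS^1 \times \{m\}$ back to the single point $\{0\} \times \{m\} \in \bD^2 \times \partial M$. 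The key points to verify are that the angular coordinate inherited from the cylindrical directing segment $I$ (closed up to the circle $\bS^1$ in $T(W)$) matches the canonical angular coordinate of $\partial \bD^2$ compatible with the coorientation, and that the resulting quotient space is smooth with $\partial M$ appearing as a codimension-two submanifold in the expected way. This is the usual smoothness-after-collapse statement for the cartesian/polar change of coordinates, performed fiberwise over $\partial M$.

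Once this local inverse identification is established, I would assemble the two directions of the proposition. Given a Seifert hypersurface $M \hookrightarrow W$, applying the splitting gives the cylindrical cobordism $\Sigma_M(W) = \Sigma_M(\Pi_{\partial M}(W))$; taking its circle-collapsed mapping torus amounts to first applying Proposition \ref{prop:recext} (to recover $\Pi_{\partial M}(W)$ together with the properly embedded $M$) and then applying the inverse of the radial blow-up along $\partial M$ (the circle-collapse), which recovers $W$ with $M$ back as a Seifert hypersurface. Conversely, given a cylindrical cobordism $W : M^- \Longmapsto M^+$, taking $T_c(W)$ gives a Seifert hypersurface $M \hookrightarrow T_c(W)$; splitting it amounts to first reversing the circle-collapse (yielding $T(W)$ with its toric boundary and $M$ properly embedded) and then applying the inverse of the mapping torus construction, which by Proposition \ref{prop:recext} gives back the original cylindrical cobordism $W$. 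The compatibility of all identifications (canonical up to a unique diffeomorphism over the base, by Remark \ref{rem:gencut} (\ref{rem:riemrel})) finishes the proof.
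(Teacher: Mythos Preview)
Your proposal is correct and well-organized. However, the paper does not actually give a proof of this proposition: it is stated without proof, treated as an immediate consequence of the definitions (just as Proposition \ref{prop:recext} is). Your argument supplies the details the paper omits, namely the factorization into the two reversible steps (mapping torus vs.\ splitting, and radial blow-up vs.\ circle-collapse) and the local polar-coordinate verification near $\partial M$. So your approach is not different from the paper's so much as it is an explicit unpacking of what the paper regards as evident from Definitions \ref{def:polarblow}, \ref{def:extnonprop}, and \ref{def:circol}.
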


This shows that, in differential-topological constructions, one may
use interchangeably  either cylindrical cobordisms or Seifert
hypersurfaces.

One may describe the construction of the circle-collapsed mapping
torus of a cylindrical cobordism in a slightly different way, by
filling the boundary of the mapping torus with a product manifold,
instead of collapsing the circles contained in it:

\begin{lemma}  \label{lem:secdescr}
      Let $W : M^- \Longmapsto M^+$ be a cylindrical cobordism with base $M$.
     The manifold obtained by gluing the mapping torus $T(W)$ to the product
     $\bD^2 \times \partial M$ through the canonical identification of their boundaries
     is diffeomorphic to the circle-collapsed mapping torus $T_c(W)$ through a
     diffeomorphism which is the identity on the complement of an arbitrary neighborhood
     of $\bD^2 \times \partial M$ and which sends $0 \times \partial M$ onto
     $\partial M$.
\end{lemma}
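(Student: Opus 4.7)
Set $V := T_c(W)$ and view $M$ as the associated Seifert hypersurface of $V$, equipped with an adapted tubular neighborhood $N \simeq \bD^2 \times \partial M$ of $\partial M$ (Definitions \ref{def:circol}, \ref{def:prodneighb}). By Proposition \ref{prop:invop}, $W$ is canonically the splitting $\Sigma_M(V)$, which by Definition \ref{def:extnonprop} is obtained by first radially blowing $V$ up along $\partial M$ (replacing $N$ by $[0,1] \times \bS^1 \times \partial M$) and then splitting along the strict transform of $M$. Since the mapping torus inverts splittings of properly embedded cooriented hypersurfaces (Proposition \ref{prop:recext}), there is a canonical diffeomorphism $T(W) \simeq \Pi_{\partial M}(V)$ under which the boundary piece $\bS^1 \times \partial M$ of $T(W)$ coming from the cylindrical structure corresponds to $\{0\} \times \bS^1 \times \partial M \subset \Pi_{\partial M}(V)$.

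Through this identification, the glued manifold $U := T(W) \cup_{\bS^1 \times \partial M} (\bD^2 \times \partial M)$ becomes $\Pi_{\partial M}(V)$ with an extra copy of $\bD^2 \times \partial M$ attached along $\partial \bD^2 \times \partial M = \{0\} \times \bS^1 \times \partial M$. Outside the adapted neighborhood $N$, the radial blow-up map $\pi_{\partial M}$ already provides a diffeomorphism onto $V \setminus N$, so it suffices to match the inner region
\[ Q := \bigl([0,1] \times \bS^1 \times \partial M\bigr) \cup_{\{0\} \times \bS^1 \times \partial M = \partial \bD^2 \times \partial M} \bigl(\bD^2 \times \partial M\bigr) \]
with the original neighborhood $N = \bD^2 \times \partial M \subset V$.

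This is achieved by a fiberwise local model: the $2$-manifold $[0,1] \times \bS^1 \cup_{\{0\} \times \bS^1 = \partial \bD^2} \bD^2$, after smoothing the seam, is diffeomorphic to $\bD^2$ via a diffeomorphism that is the identity in a neighborhood of the outer circle $\{1\} \times \bS^1$ and sends the center $\{0\}$ of the attached $\bD^2$ to the center of the target $\bD^2$. Taking products with $\partial M$ yields a diffeomorphism $Q \simeq N$ with the same properties; pasting it with $\pi_{\partial M}$ on the complement of $N$ produces the required diffeomorphism $F : U \to V = T_c(W)$, which is the identity outside any prescribed neighborhood of the attached $\bD^2 \times \partial M$ and sends $\{0\} \times \partial M$ to $\partial M$. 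The main---but routine---technical point is the smoothing of the seam $\partial \bD^2 \times \partial M$ where an annulus meets a disk along a circle, a standard corner-smoothing argument in the style of \cite{H 76} or the Appendix of \cite{M 59}.
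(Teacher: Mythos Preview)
The paper does not give a proof of Lemma \ref{lem:secdescr}; it is stated as an auxiliary fact and immediately used. Your proposal therefore cannot be compared to an existing proof, but it is a correct and well-organized argument: the key reduction $T(W)\simeq\Pi_{\partial M}(T_c(W))$ via Propositions \ref{prop:invop} and \ref{prop:recext} is exactly right, and the remaining step---that attaching $\bD^2\times\partial M$ to the annular collar $[0,1]\times\bS^1\times\partial M$ created by the radial blow-up reproduces the original adapted neighborhood---is the obvious fiberwise identification $[0,1]\times\bS^1\cup_{\partial\bD^2}\bD^2\simeq\bD^2$. Your remarks about the diffeomorphism being the identity outside a neighborhood of the filled piece and sending $\{0\}\times\partial M$ to $\partial M$ follow directly from this local model, so the claimed properties are verified.
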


We will use this second description in the proof of Proposition
\ref{prop:samedef}.

\medskip
We apply now the previous considerations to the special situation where
$M \hookrightarrow W$ is a page of an \emph{open book}. Let us recall
first this notion:

\begin{definition} \label{def:openbook}
   An {\bf open book} in a closed manifold $W$ is a pair  $(K,  \theta)$ consisting of:
\begin{enumerate}
     \item a codimension $2$ submanifold $K \subset W$, called the {\bf binding},
        with a trivialized normal bundle;
     \item  a fibration $\theta: W \setminus K \to \bS^1$ which, in a tubular neighborhood
         $\bD^2 \times K $ of $K$ is the normal angular coordinate (that is, the composition
        of the first projection $\bD^2 \times K \to \bD^2 $ with the angular coordinate
        $\bD^2 \setminus \{0\}  \to \bS^1$).
 \end{enumerate}
\end{definition}

It follows that for each $\theta_0 \in \bS^1$, the closure in $W$ of
$\theta^{-1}(\theta_0)$---called a {\bf page} of the open book---is a
Seifert hypersurface whose boundary is the binding $K$. Its
coorientation is defined by turning the pages in the positive sense
of $\bS^1$. If $v$ is a vector field which is transverse to the
pages, meridional near $K$ and such that its vectors project to
positive vectors on $\bS^1$, then the first return map of $v$ on an
arbitrary page is called the \textbf{geometric monodromy} of the
open book. As in the $3$-dimensional case, such a geometric
monodromy is well-defined up to isotopies relative to the boundary
and conjugations by diffeomorphisms which are the identity on the
boundary.  No conjugation appears if the initial page is fixed.

One may describe the previous monodromical considerations in a slightly
different way, using the splitting of the ambient manifold along a page
(see Definition \ref{def:extnonprop}).
Let $M \hookrightarrow W$ be an arbitrary page of the open book.
The splitting of $W$ along $M$ is a cylindrical cobordism $\Sigma_M(W): M \Longmapsto M$.
Consider the same vector field as before. Its flow realizes a diffeomorphism
from the incoming boundary (a copy of $M$) to
the outgoing boundary (another copy of $M$).
Therefore it gives a diffeomorphism of $M$, which is moreover
fixed on the boundary of $M$. It is the geometric monodromy diffeomorphism!

This geometric monodromy is isotopic to the identity if and only if
$\Sigma_{M}(W)$ is isomorphic to the cylinder $I \times M$ by an
isomorphism which is the identity on the boundary and respects the
fibrations over the interval $I$. Note that $\Sigma_{M}(W)$ is
always isomorphic to that cylinder, if we do not impose constraints
on the boundary.

Conversely, for any self-diffeomorphism $\phi$ of a compact
manifold-with-boundary $M$ which is the identity on $\partial M$,
one can construct as follows a closed manifold equipped with an
open book with page $M$ and monodromy $\phi$:
\begin{itemize}
      \item  take the cylinder $[0, 1] \times M$;
      \item consider it as a cylindrical cobordism $[0, 1] \times M : M_0 \Longmapsto M_1$
        where $M_0, M_1$ are two copies of $M$, that $M_0$ is identified to
        $\{0 \} \times M$ using the identity of $M$ and $M_1$ is identified to
        $\{1 \} \times M$ using $\phi : M \simeq M_1$;
     \item take the circle-collapsed mapping torus associated to this cylindrical cobordism
         (see Definition \ref{def:circol}). The fibers of the first projection
         $[0, 1] \times M \to [0, 1]$ induce the pages of an open book structure on it.
  \end{itemize}

\begin{remark}\label{rem:eqfib}
    \begin{enumerate}
       \item \label{rem:aob}
            The pair $(M, \phi)$ is sometimes called an {\bf abstract open book}.

       \item  \label{rem:classic}
          The mapping torus of the previous cylindrical cobordism (according
           to Definition \ref{def:maptor}) coincides with the classical mapping torus $\cM(M, \phi)$
           of the diffeomorphism $\phi$. This is the reason why we chose the
           name ``mapping torus'' for the object introduced in Definition \ref{def:maptor}.

       \item  A codimension $2$ closed submanifold $K \hookrightarrow V$
            of a closed manifold is called a \textbf{fibered knot} if it is the
            binding of some open book $(K, \theta)$. In this case, the map
            $\theta$ is not part of the structure.

         \item \label{rem:histob}
               One may consult \cite{W 98} for a survey of the use of open books till 1998.
               Since then, Giroux's paper \cite{Gi 02} started a new direction of applications
               of open books, into contact topology.
               The expression ``open book" appeared for the first time in 1973 in
            the work of Winkelnkemper \cite{W 73}. Before, equivalent
            notions of ``fibered knots" and ``spinnable structures'' were
            introduced in 1972 by Durfee and Lawson \cite{DL 72} and Tamura
            \cite{T 72} respectively.  All those papers were partly inspired by Milnor's
            discovery in \cite{M 68} of such structures --- without using any name for them ---
            associated to any germ $f : (\C^n, 0) \to (\C, 0)$ of polynomial with
            an isolated singularity at $0$.
            In \cite{CNP 06} was introduced the name ``Milnor open book'' for the open books
            associated more generally to holomorphic functions on germs of complex spaces,
            when both have isolated singularities.
      \end{enumerate}
\end{remark}

\bigskip
\section{Abstract and embedded summing}
\label{sec:absum}

In this section we define a notion of \emph{sum} of
manifolds-with-boundary of the same dimension (see Definition
\ref{def:absum}), which generalizes the usual notion of
\emph{plumbing} recalled in Definition \ref{def:plumb}. The sum is
done along identified \emph{patches} and extends to a commutative
and associative operation on \emph{patched manifolds} with
identified patches. Then we define an embedded version of this sum
(see Definition \ref{def:embsumpatch}). Unlike the abstract sum, this
operation is in general non-commutative, but it is still associative
(see Proposition \ref{prop:propembsum}). It generalizes both
Stallings' and Lines' plumbing operations recalled in Section
\ref{sec:classop}.
\medskip

In the sequel, we will consider embedded
submanifolds-with-boundary in other mani\-folds-with-boundary of the
same dimension, where part of the boundary of the submanifold
belongs to the interior, and part to the boundary of the ambient
manifold. The next two definitions will allow us to speak shortly
about such embeddings:

\begin{definition}  \label{def:attastruc}
    Let $P$ be a compact manifold-with-boundary.
    An {\bf attaching region} $A \hookrightarrow \partial P$ is
    a compact  manifold-with-boundary of the same dimension as
    $\partial P$.
    The closure $B := \overline{\partial P \setminus A}$ of the
    complement of the attaching region is the {\bf non-attaching region}.
    We say that $(P, A)$ is an {\bf attaching structure} on $P$.
    The {\bf complementary attaching structure} of $(P,A)$  is $(P, B)$.
\end{definition}

\begin{definition}   \label{def:patch}
   Let $M$ be an $n$-dimensional compact  manifold-with-boundary.
   A {\bf patch} of $M$ is the datum of an attaching structure $(P, A)$ on
   another  $n$-dimensional compact manifold-with-boundary
   and of an embedding $P \hookrightarrow M$ such that
   $P \cap \partial M = B$, where $B$ is the non-attaching region of
   $(P,A)$ (see Figure \ref{F}). That is, the attaching region $A$ is the closure
   of $\partial P \cap \mbox{int}(M)$.
   A manifold endowed with a patch is a {\bf patched manifold}.
   We denote it either as a pair $(M, P)$ or as an embedding
   $P \hookrightarrow M$.
\end{definition}

\begin{figure}[ht]
  \relabelbox \small {
  \centerline{\epsfbox{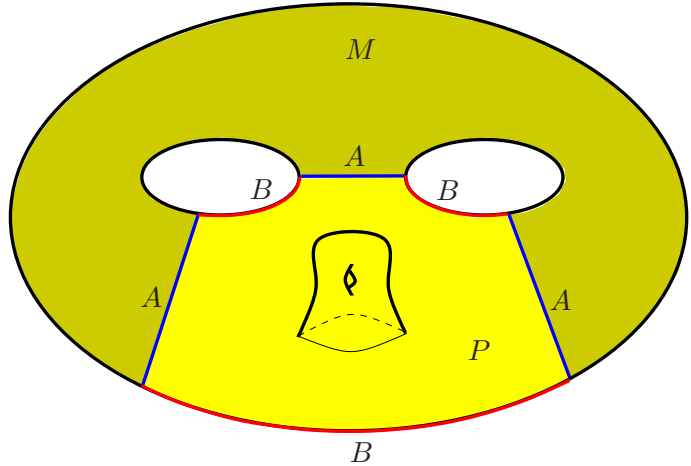}}}

\relabel{1}{$A$} \relabel{2}{$A$} \relabel{3}{$A$}

\relabel{4}{$B$} \relabel{5}{$B$} \relabel{6}{$B$}

\relabel{7}{$P$} \relabel{8}{$M$}

\endrelabelbox
        \caption{A patched manifold $(M, P)$ with patch $(P,A)$}
        \label{F}
\end{figure}

\begin{remark}
    \begin{enumerate}
        \item
      The condition $P \  \cap \  \partial M = B$   is equivalent to the condition that
      the attaching region $A$ is the closure  of $\partial P \cap \mbox{int}(M)$.
      Therefore, the attaching region is determined by the embedding
      $P \hookrightarrow M$. We chose the name ``\emph{attaching region}''
      thinking to the fact that $P$ is attached to $\overline{M \setminus P}$
      along it.

          \item  As represented in Figure \ref{F}, a patch $(P,A)$ is best
          thought as a manifold with corners. When we speak about $P$ as a
          manifold-with-boundary, we again use implicitly the fact, recalled at the beginning
          of Section \ref{sec:notconv}, that the corners may be smoothed.
      \end{enumerate}
\end{remark}

Now we are ready to give the main definition of this section, that
of an operation of summing of two patched manifolds with identified
patches:

\begin{definition}   \label{def:absum}
     Let $M_1$ and $M_2$ be two compact manifolds-with-boundary of the
     same dimension. Assume that a manifold $P$ is a patch
     of both $M_1$ and $M_2$, with the corresponding attaching regions $A_1$ and
     $A_2$, such that $A_1 \cap A_2= \emptyset$. Then we say
     that the two patched manifolds $(M_1, P)$ and
     $(M_2, P)$ are {\bf summable}.
     The {\bf (abstract) sum of $M_1$ and $M_2$ along $P$}, denoted by:
        \[   M_1 \biguplus^{P} M_2, \]
     is the compact manifold-with-boundary obtained from the disjoint union
     $M_1 \bigsqcup M_2$ by gluing the points of both copies of $P$
     through the identity map. Its {\bf associated patch} is the canonical embedding
     $\displaystyle{P \hookrightarrow M_1 \biguplus^{P} M_2}$, obtained by
     identifying the two given patches with attaching region $A_1 \cup A_2$
     (see Figure \ref{K}).
\end{definition}

\begin{figure}[ht]
  \relabelbox \small {
  \centerline{\epsfbox{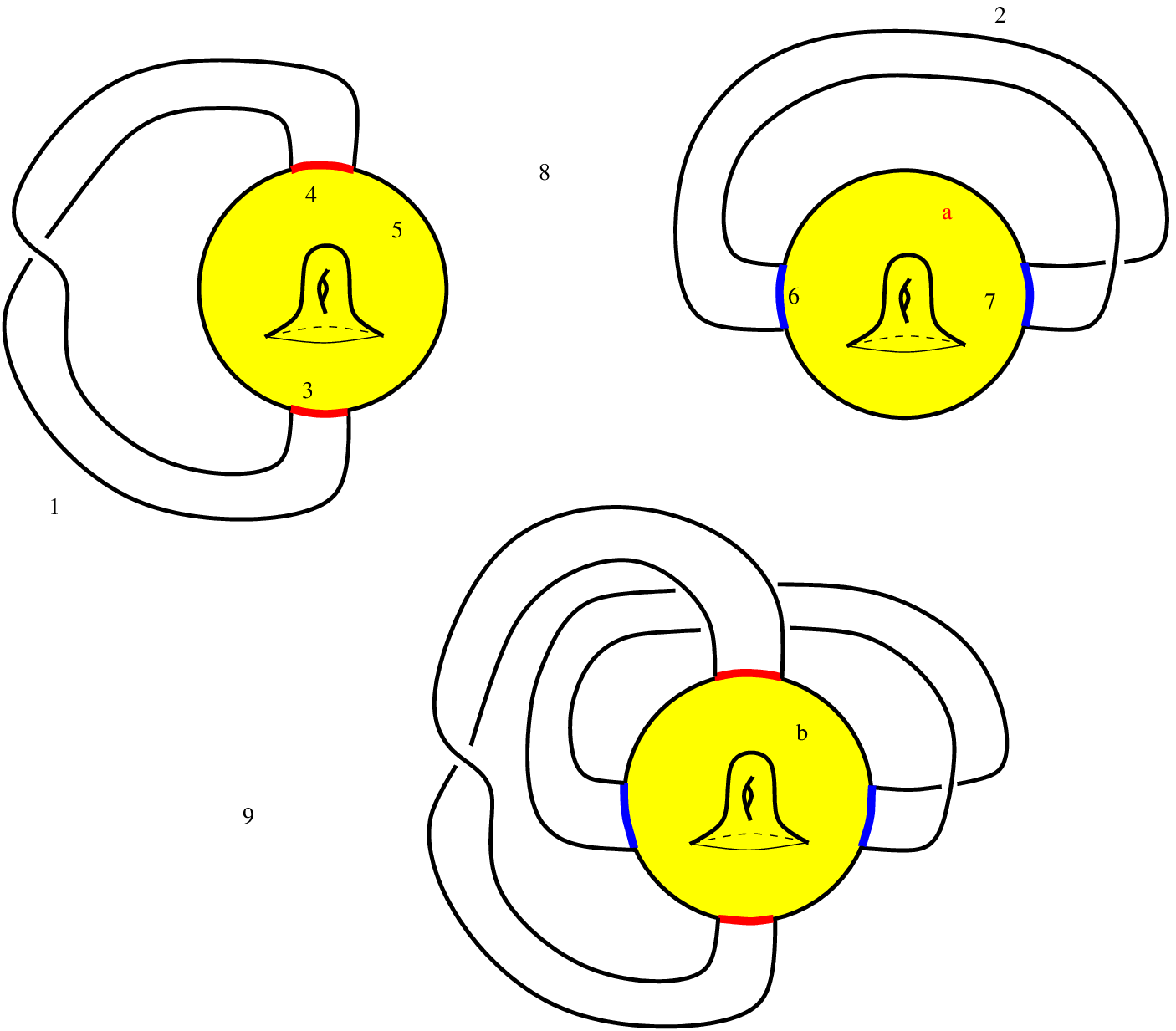}}}

\relabel{1}{$M_1$} \relabel{2}{$M_2$} \relabel{3}{$A_1$}

\relabel{4}{$A_1$}

\relabel{5}{$P$} \relabel{a}{$P$} \relabel{b}{$P$}

\relabel{6}{$A_2$}

\relabel{7}{$A_2$}

\relabel{8}{$\displaystyle{\biguplus^{P}}$}

\relabel{9}{$=$}

\endrelabelbox
        \caption{The abstract sum $\displaystyle{M_1 \biguplus^{P} M_2}$
           of $M_1$ and $M_2$ along $P$
       }
        \label{K}
\end{figure}

Note that Definition~\ref{def:absum} respects our convention
explained in Remark \ref{rem:simplnot}. It may be
immediately extended to the case where the patches are distinct, and
are identified by a given diffeomorphism, such that after the
identification the attaching regions are disjoint.

\begin{remark}  \label{rem:varem}
    \begin{enumerate}
       \item  \label{rem:disj}
             The attaching region of $\displaystyle{P \hookrightarrow M_1 \biguplus^{P} M_2}$
             is the union of the attaching regions of
             $P \hookrightarrow M_1$ and $P \hookrightarrow M_2$.

        \item \label{rem:welldef}
           One may also present the construction of $\displaystyle{M_1 \biguplus^{P} M_2}$ in
           the following way (see Figure \ref{L}):
           glue $\overline{M_1 \setminus P}$ to $M_2$ by the canonical identification of
           $A_1 \hookrightarrow \partial ( \overline{M_1 \setminus P})$
           and $A_1 \hookrightarrow \partial M_2$. One has this last inclusion
           because the hypothesis $A_1 \cap A_2 = \emptyset$
           implies that $A_1 \subset B_2 \subset \partial M_2$, where $B_2$ denotes
           the non-attaching region of $(P, A_2)$. This second description shows
           that, indeed, the sum $\displaystyle{M_1 \biguplus^{P} M_2}$ has a structure of
           manifold-with-boundary. One has of course a symmetric description
           obtained by permuting the indices $1$ and $2$.

\begin{figure}[ht]
  \relabelbox \small {
  \centerline{\epsfbox{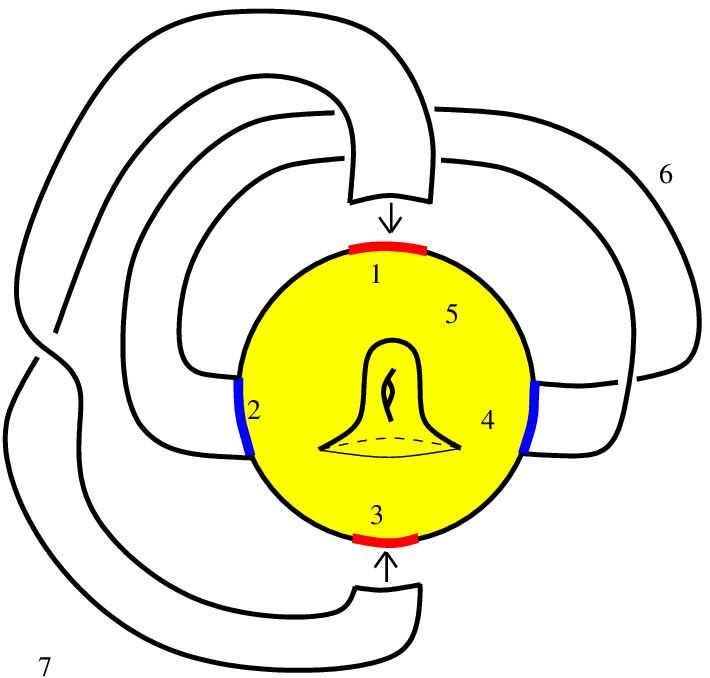}}}

\relabel{1}{$A_1$} \relabel{2}{$A_2$} \relabel{3}{$A_1$}

\relabel{4}{$A_2$}

\relabel{5}{$P$}

\relabel{6}{$M_2$}

\relabel{7}{$\overline{M_1 \setminus P}$}

\endrelabelbox
        \caption{An alternative description of the
             abstract sum $\displaystyle{M_1 \biguplus^{P} M_2}$
       }
        \label{L}
\end{figure}

         \item \label{rem:isosum}
               If  $\displaystyle{M_1 \biguplus^{P} M_2}$ is viewed as described in
               the previous remark, one can see that a diffeomorphic
               manifold is obtained by allowing isotopies of $A_1$ inside the non-attaching
               region $B_2 := \overline{\partial P \setminus A_2}$. In other words, it is
               sufficient to require only that \emph{the interiors of $A_1$ and $A_2$
               are disjoint}. Note that, if $A_1 \cap A_2 \neq \emptyset$, then strictly speaking,
               $P$ is not a patch inside $\displaystyle{M_1 \biguplus^{P} M_2}$.
               Nevertheless, in this case one still gets a canonical realization of $P$ as a patch,
               \emph{up to isotopy},
               in $\displaystyle{M_1 \biguplus^{P} M_2}$, by isotoping $A_1$ inside
               itself so that
               the hypothesis $A_1 \cap A_2 = \emptyset$ is
               achieved.
              As explained in Section \ref{sec:notconv}, the operations of gluing done here
               are defined up to smoothing of the corners.

          \item  \label{rem:genplumb}
              When the two patches used in the summing are the complementary
              patches $(\bD^n \times \bD^n, \bS^{n-1} \times \bD^n)$ and
              $(\bD^n \times \bD^n,  \bD^n \times \bS^{n-1})$, one gets
              the classical notion of \emph{plumbing} recalled in Definition
              \ref{def:plumb}. This is an example of a situation discussed in
              Remark \ref{rem:varem} (\ref{rem:isosum}), in which only the interiors of the
              attaching regions are disjoint.

        \end{enumerate}
\end{remark}

Remark \ref{rem:varem} (\ref{rem:isosum})  shows that one may define
the {\bf abstract sum}:
             \[  \biguplus^P_{i \in I} M_i  \]
            whenever $P$ appears  as a patch of all the manifolds in a finite collection
            $(M_i)_{i \in I}$ of manifolds-with-boundary with \emph{pairwise
            disjoint interiors of attaching regions}.

            This sum is \emph{commutative and associative}
            (up to unique isomorphisms), which motivates the absence of brackets in the notation.
            It is again endowed with a canonical patch $\displaystyle{P \hookrightarrow
             \biguplus^P_{i \in I} M_i}$ whenever the attaching regions themselves
             are pairwise disjoint.
             As explained in Remark~\ref{rem:varem} (\ref{rem:isosum}),
             if only the interiors of the initial patches are assumed to be
             disjoint, then there is still such a patch, but only well-defined up to
             isotopy.

 \medskip
  We pass now to the definition of the \emph{embedded sum}. Let us explain first
    which are the objects which may be summed in this way.

\begin{definition}  \label{def:embpatch}
       Let $W$ be a compact manifold-with-boundary and
        $P \hookrightarrow M$ be a patched manifold.
        Assume that $M \hookrightarrow \mbox{int}(W)$ is an embedding
        of $M$ as a hypersurface of $\mbox{int}(W)$.
        We say that the triple $(W, M, P)$, also denoted
       $P \hookrightarrow M \hookrightarrow W$, is
       a {\bf patch-cooriented triple}  if:
           \begin{itemize}
               \item[$\bullet$]  $P$ is coorientable in $W$;
                \item[$\bullet$]  a coorientation of $P$ in $W$ is chosen.
           \end{itemize}
\end{definition}

In the previous definition, $M$ is not necessarily a Seifert
hypersurface of $W$ (see Definition \ref{def:seifhyp}). Indeed, we
only assume that a coorientation was chosen \emph{along} $P$. It is even
possible that $M$ is not coorientable inside $W$. To illustrate
this, we depict in Figure~\ref{G} a cooriented quadrilateral patch $P$ of a M\"obius
band $M  \hookrightarrow W :=\bS^3$.

\begin{figure}[ht]
  \relabelbox \small {
  \centerline{\epsfbox{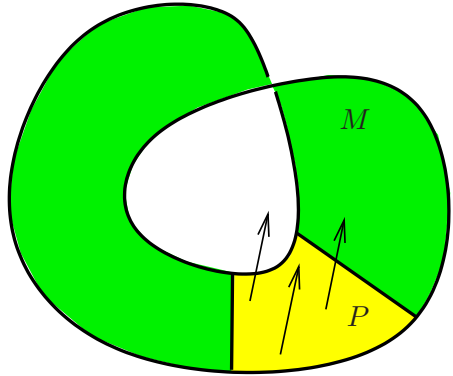}}}

\relabel{1}{$M$}

\relabel{2}{$P$}

\endrelabelbox
        \caption{Cooriented quadrilateral patch $P$ in a M\"obius band $M$}
        \label{G}
\end{figure}

Recall that the notion of \emph{positive side} for a cooriented
hypersurface was explained in Definition \ref{def:sides}. Let
$I^+$ and $I^-$ denote oriented compact intervals.

\begin{definition} \label{def:thickpatch}
    Let $(W, M, P)$ be a patch-cooriented triple. A {\bf positive thick patch}
    of $(W,M,P)$  is a choice of positive side $I^+ \times P \hookrightarrow W$
    of $P \hookrightarrow W$ intersecting $M$ only along $P$.
    If for example $I^+ =[0,1]$, then this means that $\{0\}\times P$
    maps to $P$ in $M$, and the positive tangents to $I^+$ point in
    the direction of co-orientation.
    Analogously, a {\bf negative thick patch}
    of $(W,M,P)$  is a choice of negative side $I^- \times P \hookrightarrow W$
    of $P \hookrightarrow W$, also intersecting $M$ only along $P$.
\end{definition}

We may now describe a generalization of Stallings' (embedded)
plumbing operation recalled in Section \ref{sec:classop} (see the
quotation containing equality (\ref {eq:plumbrel})) and of Lines'
higher dimensional plumbing operation (see Definition
\ref{def:msumhigh}):

\begin{definition}  \label{def:embsumpatch}
         Let $(W_1, M_1, P)$ and $(W_2, M_2, P)$ be
         two patch-cooriented triples with
         identified patches, such that $(M_1, P)$ and $(M_2, P)$ are two
         summable patched manifolds (recall Definition \ref{def:absum}).
        Then we say  that the two triples are {\bf summable} and
         their {\bf (embedded) sum}, denoted by:
           \[   (W_1,M_1) \biguplus^{P} \  (W_2, M_2),   \]
         is the compact manifold-with-boundary obtained by the following process
         (see Figure \ref{H}):
         \begin{itemize}
               \item choose a positive thick patch $I^+ \times P
                  \hookrightarrow W_1$ of $(W_1, M_1, P)$ and a negative thick patch
                  $I^- \times P \hookrightarrow W_2$ of $(W_2, M_2, P)$;

               \item consider the complements of their interiors
                   $W_1' := W_1 \setminus \mbox{int}(I^+ \times P)$
                   and $W_2' := W_2 \setminus \mbox{int}(I^- \times P)$;

                \item glue $W_1'$ to $W_2'$ by identifying
                  $\partial( I^+ \times P) \hookrightarrow W_1'$ to
                  $\partial( I^- \times P) \hookrightarrow W_2'$ through
                  the restriction of the map $\sigma \times \mbox{id}_P :
                  I^+ \times P \to I^- \times P$. Here
                  $\sigma : I^+ \to I^-$ denotes any diffeomorphism which reverses
                  the orientations (that is, such that $\sigma(\partial_{\pm} I^+) =
                    \partial_{\mp} I^-$).
         \end{itemize}
It follows that:
        $$\big(  (W_1,M_1) \biguplus^{P} \  (W_2,
M_2), M_1 \biguplus^{P} M_2, P\big)$$
is a patch-cooriented triple through the canonical embeddings:
        \[ P \hookrightarrow   M_1 \biguplus^{P} M_2 \hookrightarrow
               (W_1,M_1) \biguplus^{P} \:  (W_2, M_2)  \]
       and the gluing of the coorientations of $P$ in $W_1$ and in
       $W_2$.
\end{definition}

\begin{figure}[ht]
  \relabelbox \small {
  \centerline{\epsfbox{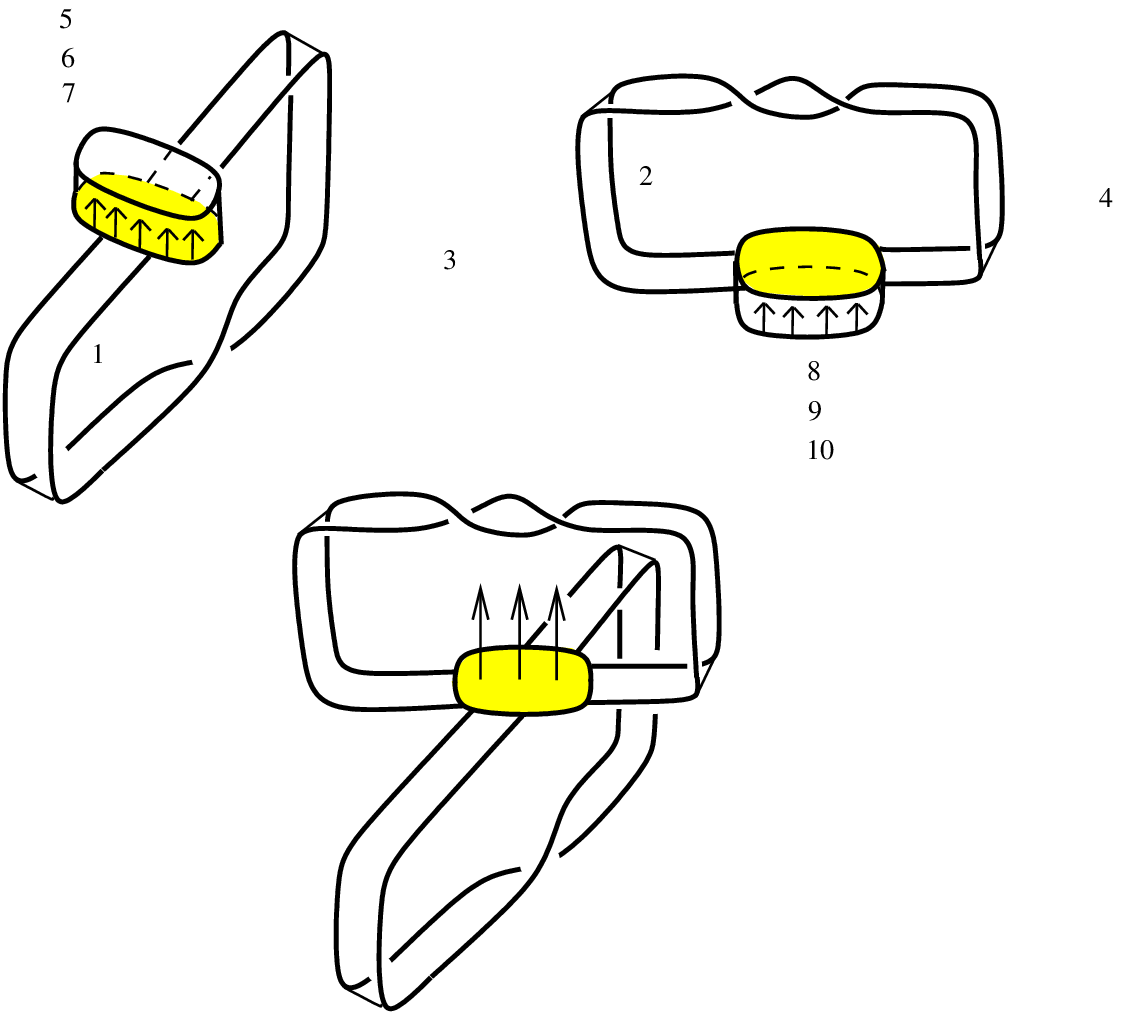}}}

\relabel{1}{$M_1$}

\relabel{2}{$M_2$}

\relabel{3}{$ \displaystyle{\biguplus^{P}}$}

\relabel{4}{$=$}

\relabel{5}{positive}

\relabel{6}{thick}

\relabel{7}{patch}

\relabel{8}{negative}

\relabel{9}{thick}

\relabel{10}{patch}

\endrelabelbox
        \caption{Embedded sum $\displaystyle{(W_1,M_1) \biguplus^{P} \:  (W_2, M_2)}$
            of two patch-cooriented triples}
        \label{H}
\end{figure}

\begin{remark}
    \begin{enumerate}
                   \item  The manifold $ \displaystyle{(W_1,M_1) \biguplus^{P} \:  (W_2, M_2)}$
          has non-empty boundary  if and only if either $W_1$ or $W_2$
          has a non-empty boundary.

          \item The abstract sum $\displaystyle{M_1 \biguplus^{P} M_2}$
            is obtained inside $\displaystyle{ (W_1,M_1) \biguplus^{P} \:  (W_2, M_2)}$
            as the union of the images of $M_1 \hookrightarrow W_1'$
            and of $M_2 \hookrightarrow W_2'$.

             \item We choose to take a positive thick patch for the first hypersurface
             and a negative one for the second hypersurface in order to respect
             Stallings' convention (see the citation containing formula
             (\ref{eq:plumbrel})). If we choose the other way
             around, we get an alternative definition of the embedded sum of the triples
             $(W_1, M_1, P),  \:  (W_2, M_2, P)$, which is
             diffeomorphic to $\displaystyle{ (W_2, M_2) \biguplus^{P} \:  (W_1, M_1)}$ by
             a diffeomorphism which fixes $\displaystyle{M_1 \biguplus^{P} M_2}$
             and the coorientation of $P$. The operation of embedded sum being
             in general non-commutative (see Proposition \ref{prop:propembsum}),
             this alternative definition is indeed different from Definition \ref{def:embsumpatch}.
    \end{enumerate}
\end{remark}

\begin{proposition}  \label{prop:propembsum}
     The patch being fixed, the operation of embedded sum of patch-cooriented triples
     is associative, but non-commutative in general.
\end{proposition}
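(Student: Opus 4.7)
The plan is to deduce both statements from careful bookkeeping of the positive and negative thick patches in Definition \ref{def:embsumpatch}, leveraging the previously established associativity and commutativity of the abstract sum $\biguplus^P$.

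For \textbf{associativity}, suppose we are given three pairwise summable patch-cooriented triples $(W_i, M_i, P)$, $i=1,2,3$. For each $W_i$ I would choose a collar $[-1,1] \times P \hookrightarrow W_i$ of $P$ compatible with the coorientation, so that $[0,1] \times P$ is a positive thick patch and $[-1,0] \times P$ a negative one. Forming $(W_1, M_1) \biguplus^P (W_2, M_2)$ then amounts to removing the open positive half-collar from $W_1$ and the open negative half-collar from $W_2$, and gluing through an orientation-reversing identification of the resulting boundary copies of $P$. The intermediate sum inherits a canonical collar $[-1,1] \times P$ in which the negative half comes from $W_1$'s collar and the positive half from $W_2$'s collar, providing natural positive and negative thick patches for the next step. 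Carrying out both bracketings explicitly, one finds that each produces the same manifold: retain $W_1$ minus its positive half-collar, $W_3$ minus its negative half-collar, and $W_2$ minus its \emph{entire} collar, then glue the three pieces along three copies of $P$ (and their surrounding new boundary pieces) through the canonical orientation-reversing interval identifications. The commutativity and associativity of the underlying abstract sum then ensure that the embedded hypersurface $M_1 \biguplus^P M_2 \biguplus^P M_3$ and the coorientation of $P$ assemble identically in both bracketings, so the two triples are canonically diffeomorphic.

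For \textbf{non-commutativity}, it suffices to exhibit summable triples whose two orderings give non-diffeomorphic patch-cooriented triples. Definition \ref{def:embsumpatch} is manifestly asymmetric: the positive thick patch is removed from the left summand and the negative one from the right summand, so swapping exchanges which side of $P$ is retained near the gluing locus in each $W_i$. Whenever the two sides of $P$ in some $W_i$ are topologically distinguishable --- for example when $W_1$ carries nontrivial topology attached to only the positive side of $M_1$ (such as a handle on the positive hemisphere), while $W_2$ is a symmetric model like $\bD^n$ with an equatorial disk $M_2$ and a rectangular patch $P$ --- the two orderings retain different portions of $W_1$ in the sum and therefore cannot be related by a diffeomorphism fixing $M_1 \biguplus^P M_2$ together with the coorientation of $P$.

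The main obstacle I expect is the bookkeeping in the associativity argument: to ensure that the two bracketings yield \emph{literally} the same manifold (and not just diffeomorphic ones), the collars of $P$ must be fixed coherently in all three $W_i$, and the orientation-reversing interval identifications used in the two three-way gluings must be checked to agree after the natural boundary identifications. Once the collars are chosen coherently, the verification reduces to the elementary identity $[-1,0] \cup [0,1] = [-1,1]$ in each fiber of the collar over a point of $P$.
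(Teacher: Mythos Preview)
Your associativity argument is essentially the paper's: both verify that either bracketing amounts to removing a positive thick patch from $W_1$, both a positive and a negative thick patch from $W_2$, and a negative thick patch from $W_3$, and then performing the same gluings. Your explicit collar bookkeeping is a fine way to phrase this.

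Your non-commutativity argument, however, has a genuine gap. The assertion that ``the two orderings retain different portions of $W_1$ in the sum'' is misleading: in \emph{both} orderings one removes from $W_1$ the interior of a product $[0,1]\times P$, just on opposite sides of $P$, and these complements are diffeomorphic as manifolds-with-boundary. A handle placed ``on the positive side of $M_1$'' but away from the thick patch survives untouched in both orderings. What actually changes between the two orderings is the way the gluing places $W_2'$ on one side of $P$ or the other; you have not exhibited any invariant of the resulting patch-cooriented triple that detects this. Your example with $W_2=\bD^n$ is particularly dangerous: since $W_2'$ is again a ball, summing with it is very close to doing nothing, and your handle in $W_1$ sits in the same position relative to $M_1\biguplus^P M_2$ in both orderings, so it is not at all clear the two triples differ.

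The paper resolves this by a concrete computation rather than a heuristic. It takes $M_1,M_2$ to be bands in $\bS^3$ with quadrilateral patches $P$ and core circles $K_1,K_2$ meeting transversally once inside $P$, pushes the arc $K_1\cap P$ slightly to the \emph{positive} side of $P$ to obtain a circle $K_1^+$, and computes the mod~$2$ linking number $\mathrm{lk}(K_1^+,K_2)$. This equals $1$ in $(\bS^3,M_1)\biguplus^P(\bS^3,M_2)$ and $0$ in $(\bS^3,M_2)\biguplus^P(\bS^3,M_1)$, which rules out any diffeomorphism fixing $M_1\biguplus^P M_2$ and the coorientation of $P$. To complete your proof you need some such invariant; the intuitive asymmetry of the definition is not enough.
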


\begin{proof}
    Let us prove first the \emph{associativity} of the operation.  Consider three
    summable patch-cooriented triples $(W_1, M_1, P), (W_2, M_2, P), (W_3, M_3, P)$,
    that is, assume that the attaching regions $A_1, A_2, A_3$ are pairwise disjoint.
    We want to prove that the two patch-cooriented triples:
       \[ \begin{array}{c}
              \displaystyle{(  \:   \Big( (W_1, M_1)  \biguplus^{P} \:  (W_2, M_2) \Big) \biguplus^{P}
                   \:  (W_3, M_3) ,
                 M_1  \biguplus^{P} \: M_2 \biguplus^{P} \: M_3, P  \:  ) },   \\
              \displaystyle{(\:    (W_1, M_1)  \biguplus^{P} \: \Big( (W_2, M_2)
                  \biguplus^{P} \:  (W_3, M_3) \Big) ,
                 M_1  \biguplus^{P} \: M_2 \biguplus^{P} \: M_3, P  \:  )  }
          \end{array}  \]
    are isomorphic.
    But this is an immediate consequence of the fact (see Definition \ref{def:embsumpatch})
    that both may be obtained from the disjoint union $W_1 \sqcup W_2 \sqcup W_3$ by removing:
        \begin{itemize}
            \item the interior of a positive thick patch of $(W_1, M_1, P)$;
            \item the interiors of a positive and of a negative thick patch of $(W_2, M_2, P)$,
                which intersect only along $P$;
            \item the interior of a negative thick patch of $(W_3, M_3, P)$;
        \end{itemize}
    and executing then the same gluings.

    \medskip

    Let us show now that the operation is \emph{non-commutative} in general.
       Consider the particular case where the triples to be summed are bands in
       $3$-spheres, as  in Figure \ref{H}, that is, $M_1$ and $M_2$ are
       either annuli or M\"obius bands. Moreover, assume that the patches are
       disks disposed as in that figure, that is, such that one may choose core circles
       $K_1, K_2$ of the two bands such that they intersect transversally once inside $P$.

       Denote by $J_1$ the arc of $K_1$ intercepted by $P$. Isotope $K_1$
       inside both  $\displaystyle{ (\bS^3, M_1) \biguplus^{P} \:  (\bS^3, M_2)}$ and
       $\displaystyle{ (\bS^3, M_2) \biguplus^{P} \:  (\bS^3, M_1)}$ by pushing the arc
       $J_1$ a little outside $P$ \emph{towards the positive side} of $P$, and keeping
       its complement in $K_1$ fixed. Denote by $K_1^+$ the new circle, contained
       either in $\displaystyle{ (\bS^3, M_1) \biguplus^{P} \:  (\bS^3, M_2)}$ or in
       $\displaystyle{ (\bS^3, M_2) \biguplus^{P} \:  (\bS^3, M_1)}$.  Look then at the
       linking number (modulo $2$) $\mbox{lk}(K_1^+, K_2)$. It is equal to
       $1$ in the first case and to $0$ in the second case.

          This shows that there is no isomorphism from
           $\displaystyle{ (\bS^3, M_1) \biguplus^{P} \:  (\bS^3, M_2)}$  to   \linebreak
           $\displaystyle{ (\bS^3, M_2) \biguplus^{P} \:  (\bS^3, M_1)}$ which is
           fixed on $\displaystyle{ M_1 \biguplus^{P} \:  M_2}$ and respects the coorientation
           of $P$. This is enough in order to deduce that the operation of embedded summing
           is in general non-commutative.
\end{proof}

In the next section we will consider carefully the special situation in which the hypersurfaces
$M_i \hookrightarrow W_i$ are globally cooriented:

   \begin{definition}  \label{def:patchseif}
         Let $(W_1, M_1, P)$ and $(W_2, M_2, P)$ be
         two patch-cooriented triples with identified patches. They are called
         {\bf summable patched Seifert hypersurfaces} if both $M_1 \hookrightarrow W_1$
         and $M_2 \hookrightarrow W_2$ are Seifert hypersurfaces whose coorientations
         extend those of the patches.
   \end{definition}

\bigskip
\section{The sum of stiffened cylindrical cobordisms}
\label{sec:embsum}

In Section~\ref{sec:absum} we defined an operation of embedded sum
for (summable) \emph{patch-cooriented} triples without assuming that
the hypersurfaces endowed with the (identified) patches are
themselves cooriented or even coorientable. In this section we will
assume this supplementary condition and we give an alternative
definition of the (embedded) sum based on the equivalence of Seifert
hypersurfaces and cylindrical cobordisms stated in Proposition
\ref{prop:invop}. In the next section we will show that this
alternative definition gives the same result as Definition
\ref{def:embsumpatch}.  This alternative definition will make the
proof of a generalization of Stallings' Theorem \ref{thm:stal} very
easy (see Theorem \ref{thm:genstal}).
\medskip

In the following definition we enrich the structure of cylindrical
cobordism of Definition \ref{def:cylcob}:

\begin{definition} \label{def:stiff}
   A {\bf stiffened cylindrical cobordism} (see Figure \ref{I}) is a cylindrical cobordism
   $W: M^- \Longmapsto M^+$ and a neighborhood $V$ (the {\bf stiffening}) of
   $M^-  \bigsqcup  M^+$ in $W$, endowed with a diffeomorphism to a
   neighborhood of $(\partial I) \times M$ in $I \times M$ of the form:
       \[   (I \setminus \mbox{int}(C)) \times M,  \]
   which extends the restriction to $V$ of the
   given diffeomorphism $\partial_{cyl} W \simeq \partial (I \times M)$.
   Here $C \hookrightarrow \mbox{int}(I)$ denotes a compact
    subsegment, called the {\bf core} of the stiffened cobordism.
    The pull-back to $V \cup \partial_{cyl} W$ of the first projection $I \times M \to I$ is
    called the {\bf height function} of the stiffened cylindrical cobordism.
\end{definition}

\begin{figure}[ht]
  \relabelbox \small {
  \centerline{\epsfbox{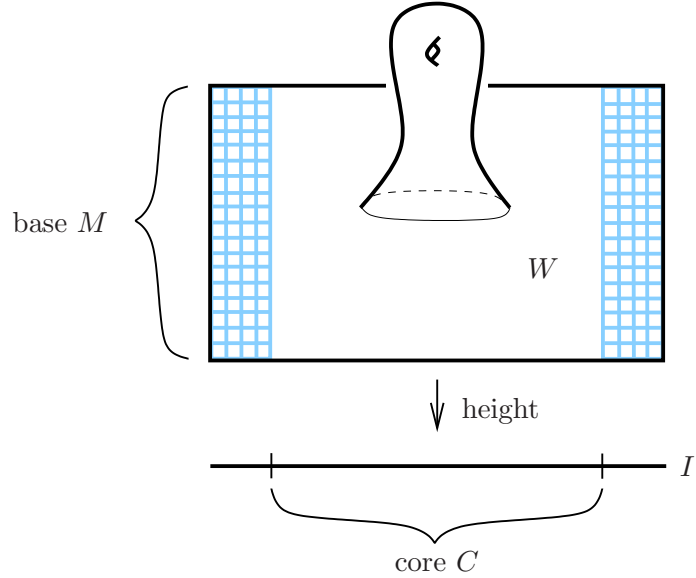}}}

\relabel{1}{$W$} \relabel{2}{base $M$} \relabel{6}{height}

\relabel{4}{core $C$} \relabel{5}{$I$}

\endrelabelbox
        \caption{A stiffened cylindrical cobordism $W$ with directing segment $I$}
        \label{I}
\end{figure}

\begin{remark}  \label{rem:stiffrem}
   \begin{enumerate}
        \item \label{rem:stiffisot}
             Given a cylindrical cobordism, stiffenings exist and are unique up to isotopy.

        \item Our choice of name is motivated by the fact that we see this supplementary
           structure as a way to rigidify or stiffen the initial cobordism.
     \end{enumerate}
\end{remark}

Recall from Lemma \ref{lem:obscyl} that one obtains cylindrical
cobordisms by splitting any manifold along a Seifert hypersurface.
Moreover, the two notions are equivalent, as shown by Proposition
\ref{prop:invop}. From this viewpoint, stiffenings correspond to
tubular neighborhoods of the Seifert hypersurface:

\begin{lemma}
     Let $M \hookrightarrow W$ be a Seifert hypersurface. Consider a
  collar neighborhood  $[-\theta, \theta] \times M$ of the strict transform
  $M \hookrightarrow \Pi_{\partial M} W$ of $M$ (see Definition \ref{def:polarblow}),
  which intersects  the boundary
  $ \bS^1 \times \partial M \hookrightarrow \Pi_{\partial M} W$
  along  $[-\theta, \theta] \times \partial M$. Here $\theta \in (0, \pi)$, therefore
  the segment  $[-\theta, \theta]$ is seen as an arc of the circle $\bS^1$.
  Then its image inside the
  splitting $\Sigma_M(W)$ is a stiffening of this cylindrical cobordism,
  with directing segment the splitting of $\bS^1$ at the point of argument $0$ and
  core segment $[\theta, 2 \pi - \theta]$.
\end{lemma}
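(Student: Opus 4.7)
The plan is to exhibit the required stiffening diffeomorphism by transporting the collar structure through the splitting map, after first analyzing the local picture near $\partial M$ in an adapted tubular neighborhood.

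First, I would invoke Definition \ref{def:extnonprop}: $\Sigma_M(W)$ is the splitting of $\Pi_{\partial M}(W)$ along the strict transform $M$, and the composite splitting map $\pi_{\partial M} \circ \sigma_M$ is a diffeomorphism away from $M$, whose preimage of $M$ is $M^- \sqcup M^+$. By Lemma \ref{lem:obscyl}, the splitting is already known to be a cylindrical cobordism, with cylindrical boundary the image of $M \cup \pi_{\partial M}^{-1}(\partial M) = M \cup (\bS^1 \times \partial M)$. It remains to pin down the directing segment $I$ and to produce the stiffening diffeomorphism.

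Next, I would work in an adapted tubular neighborhood $\bD^2 \times \partial M \hookrightarrow W$ as in Definition \ref{def:prodneighb}, where $M$ corresponds to $[0,1]\times \{0\}\times \partial M$ and the coorientation of $M$ matches the canonical orientation of $\partial \bD^2$. Radial blow-up turns this neighborhood into $[0,1]\times \bS^1 \times \partial M$ with exceptional boundary $\{0\}\times \bS^1 \times \partial M$; the strict transform of $M$ is $[0,1]\times \{0\}\times \partial M$, and the collar $[-\theta,\theta]\times M$ is realized as $[0,1]\times [-\theta,\theta]\times \partial M$, intersecting the exceptional boundary precisely along $[-\theta,\theta]\times \partial M$ as claimed. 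Splitting along $M$ then cuts the circle $\bS^1$ at argument $0$, producing the directing segment $I$: the exceptional boundary unrolls into $I\times \partial M \simeq [0,2\pi]\times \partial M$, and the cylindrical boundary $\partial_{cyl}\Sigma_M(W)$ acquires the structure $\partial(I\times M)$ required by Definition \ref{def:cylcob}.

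Then I would define the stiffening diffeomorphism $V \to (I\setminus \mathrm{int}(C))\times M$ with $C = [\theta, 2\pi-\theta]$ as follows: the lift of $[0,\theta]\times M$ inside $\Sigma_M(W)$ maps identically onto $[0,\theta]\times M \subset I \times M$, while the lift of $[-\theta,0]\times M$ maps via $(s,m) \mapsto (s+2\pi, m)$ onto $[2\pi-\theta, 2\pi]\times M$, reflecting that $-\theta$ in $\bS^1$ corresponds to $2\pi - \theta$ in the unrolled segment. By construction this extends the identification of $\partial_{cyl}\Sigma_M(W)$ with $\partial(I\times M)$, since both arise from the same unrolling of the angular coordinate on $\bS^1$. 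The main (though mild) obstacle I anticipate is matching the two copies $M^\pm$ with the two endpoints $0, 2\pi$ of $I$ in accordance with the specified cylindrical boundary identification; this is resolved by the calibration clause of Definition \ref{def:prodneighb} (that the canonical orientation of $\partial \bD^2$ equals the coorientation of $M$), which forces passage from argument $0^+$ around to $2\pi^-$ to correspond to crossing $M$ once from the positive to the negative coorientation side, fixing uniquely which copy of $M$ is $M^-$ and which is $M^+$.
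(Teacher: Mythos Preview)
Your proposal is correct and is precisely the kind of detailed verification the paper has in mind: the paper offers no proof beyond the sentence ``A straightforward proof of this lemma easily follows by inspecting Figures \ref{Eblow} and \ref{Esplit},'' and your argument is exactly that inspection carried out explicitly in the adapted tubular neighborhood. There is no alternative route to compare against; you have simply written out what the authors left to the reader.
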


A straightforward proof of this lemma easily follows  by
inspecting Figures \ref{Eblow} and \ref{Esplit}.

In the following definition we extend to stiffened cylindrical
cobordisms the notion of sum introduced for manifolds (see
Definition \ref{def:absum}) and for hypersurfaces (see Definition
\ref{def:embsumpatch}):

\begin{definition}  \label{def:sumstif}
     Consider two summable patched manifolds $(M_i, P)_{i = 1,2}$, with
     attaching regions $(A_i)_{i = 1,2}$.
     Let $(W_i : M_i^- \Longmapsto M_i^+, V_i)_{i =1,2}$ be two stiffened
     cylindrical cobordisms \emph{with identified directing segment} $I$.
     They are called {\bf summable} if their core intervals $(C_i)_{i =1,2}$ are disjoint
     and if $C_1$ is situated \emph{after} $C_2$ with respect to the orientation of $I$.
     In this case, their {\bf sum}, denoted by:
         \[ \displaystyle{(W_1, V_1) \biguplus^P \  (W_2, V_2)} \]
     is obtained by performing  the following operations using the stiffenings
        (see Figures \ref{Rbis} and \ref{R}):
          \begin{itemize}
           \item Over, $I \setminus (\mbox{int}(C_1) \cup \  \mbox{int}(C_2))$, sum
                 fiberwise $(M_1, P)$ to $(M_2, P)$ (that is, one has to multiply the gluing
                 map used to do this abstract sum by $\{t \}$, for any
                 $t \in I \setminus (\mbox{int}(C_1) \cup \  \mbox{int}(C_2))$).
\item   Over $C_1$, glue
                   $C_1 \times \overline{M_2 \setminus P}$ to $W_1$ along
                   $C_1 \times \partial M_1$ fiberwise (for each $t \in C_1$)
                     by the canonical identification of
                   $A_2 \hookrightarrow \partial ( \overline{M_2 \setminus P})$
                   and $A_2 \hookrightarrow \partial M_1$.
              \item Over $C_2$, glue
                   $C_2 \times \overline{M_1 \setminus P}$ to $W_2$ along
                   $C_2 \times \partial M_2$ fiberwise (for each $t \in C_2$)
                   by the canonical identification of
                   $A_1 \hookrightarrow \partial ( \overline{M_1 \setminus P})$
                   and $A_1 \hookrightarrow \partial M_2$.
\end{itemize}
\end{definition}

\begin{figure}[ht]
  \relabelbox \small {
  \centerline{\epsfbox{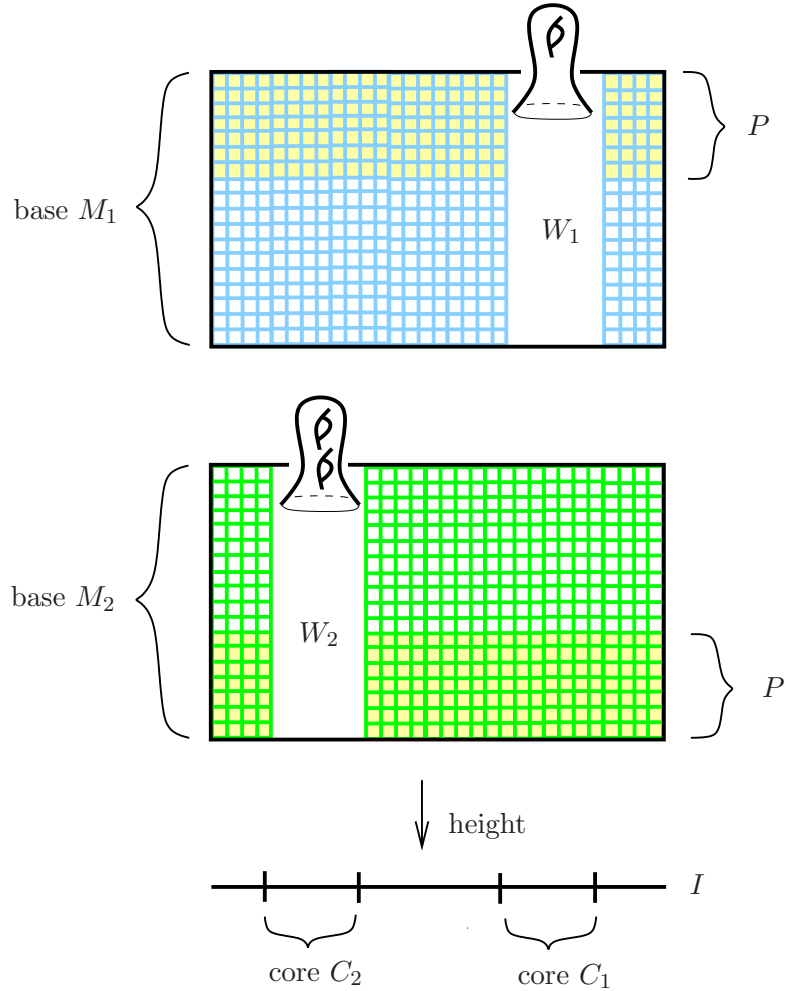}}}
\relabel{0}{$P$} \relabel{1}{$W_1$} \relabel{2}{base $M_1$}
\relabel{3}{base $M_2$} \relabel{4}{$W_2$} \relabel{5}{height}
\relabel{6}{$I$} \relabel{7}{core $C_2$} \relabel{8}{core $C_1$}
\relabel{9}{$P$}
\endrelabelbox
        \caption{Two summable stiffened cylindrical cobordisms}
\label{Rbis}
\end{figure}

\begin{remark}  \label{rem:stiffwell}
    \begin{enumerate}
         \item  \label{rem:twodef}
             Let $h_i$ denote the height
             function of the stiffened cylindrical cobordism $W_i$, for $i =
              1,2$. In Definition~\ref{def:sumstif}, we use the facts that for
            sufficiently small (and also sufficiently large) $t \in I$, the
           fiber $h_i^{-1}(t)$ is canonically identified with $M_i$ and that
             this identification extends to an identification of
             $h_i^{-1}(t) \cap
             \partial W_i$ with $\partial M_i$ for all $t \in I$, by the
             definition of a stiffened cylindrical cobordism.   All the gluings above fit
            together by  Remark \ref{rem:varem} (\ref{rem:welldef}).
        \item  \label{rem:natcyl}
             The sum $\displaystyle{W_1 \biguplus^P W_2}$ gets a natural structure of
             stiffened cylindrical cobordism with basis $\displaystyle{M_1 \biguplus^P M_2}$,
             directing segment  $I$ and core segment the convex hull inside $I$ of the cores
             $C_1$ and $C_2$. The new stiffening is the image inside
             $\displaystyle{W_1 \biguplus^P W_2}$ of the union of the initial stiffenings, and the
             two initial height functions glue into the new height function.
    \end{enumerate}
\end{remark}

Next, we extend the summing operation to cylindrical cobordisms
whose directing segments are not identified, and which do not have
fixed stiffenings. One has to make the following choices:

  \begin{itemize}
       \item Choose stiffenings. This choice is unique up to isotopy (see Remark
             \ref{rem:stiffrem}   (\ref{rem:stiffisot})).
        \item Identify their directing segments  by an orientation-preserving diffeomorphism.
              \end{itemize}

              There are two ways to make such an identification,
             up to isotopy, in order to guarantee the disjointness of the cores,
             which is an essential hypothesis
             in Definition \ref{def:sumstif}. Therefore, one gets an operation which is a priori
             non-commutative. The fact that it is indeed in general non-commutative
             results from the combination of propositions \ref{prop:propembsum} and
             \ref{prop:samedef}. More precisely, we use the fact, resulting from the proof
             of Proposition \ref{prop:propembsum} using any kinds of bands,
             that the embedded summing operation
             is non-commutative even when the hypersurfaces are globally
             cooriented.

\begin{definition}  \label{def:embsum}
     Consider two summable patched manifolds $(M_i, P)_{i = 1,2}$, with
     attaching regions $(A_i)_{i = 1,2}$.
     Let $(W_i : M_i^- \Longmapsto M_i^+)_{i =1,2}$ be two
     cylindrical cobordisms with directing segments $(I_i)_{i = 1,2}$.
     Choose stiffenings for both of them.
     Let $\varphi: I_1 \to I_2$ be an orientation-preserving  diffeomorphism
     which places the core segment of $I_1$ \emph{after} the core segment
     of $I_2$.  The {\bf sum} of $W_1$ and $W_2$, denoted by:
           \[ W_1 \biguplus^P W_2 \]
     is obtained by applying
     Definition \ref{def:sumstif} after identifying the directing segments $I_1$ and $I_2$
     using  the diffeomorphism $\varphi$.
\end{definition}

\begin{figure}[ht]
  \relabelbox \small {
  \centerline{\epsfbox{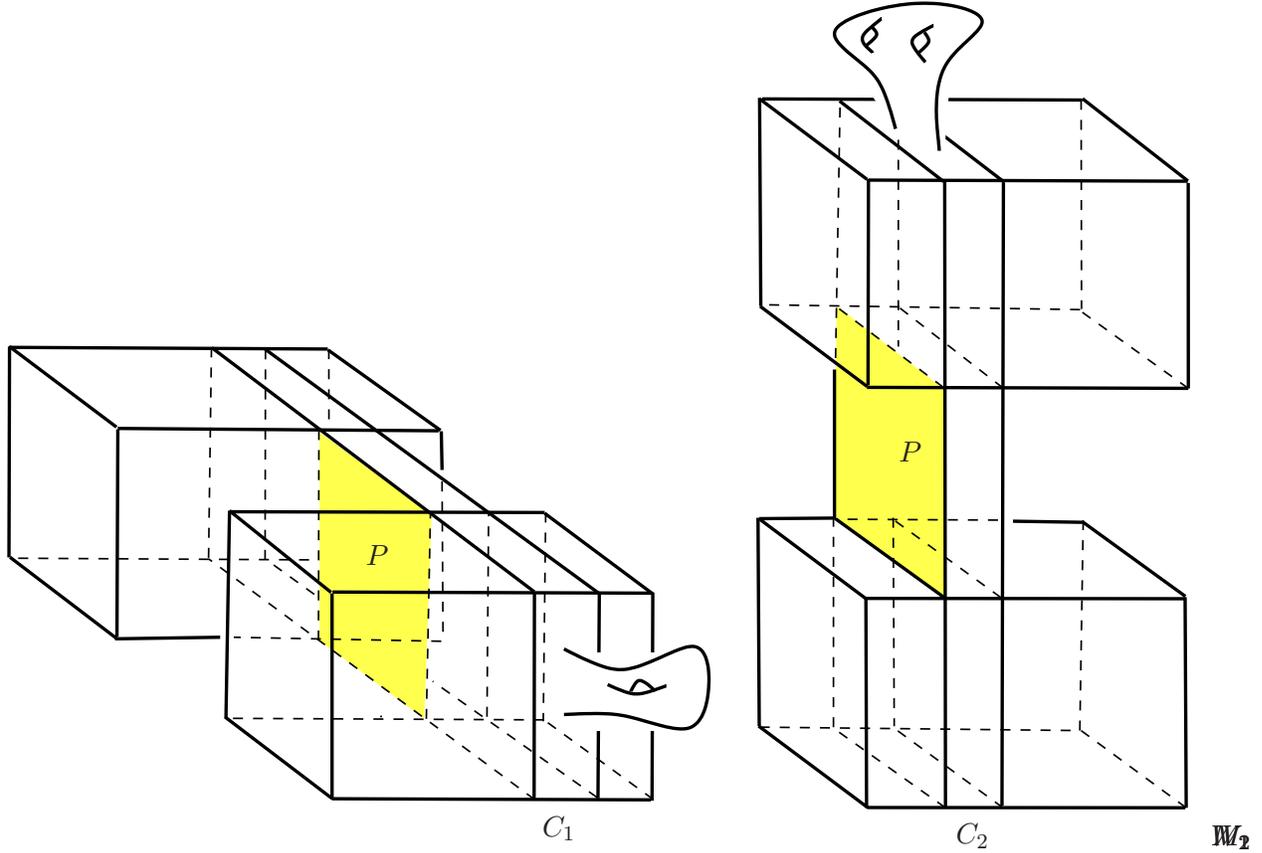}}}

\relabel{a}{$P$}

\relabel{b}{$P$}

\relabel{c}{$C_1$}

\relabel{d}{$C_2$}

\relabel{e}{$M_1$}

\relabel{f}{$M_2$}

\relabel{g}{$W_1$}

\relabel{h}{$W_2$}

\endrelabelbox
        \caption{The stiffened cylindrical cobordism $W_i$ (for $i=1,2$) is represented by the
        solid rectangular box where the solid green ball in the interior is removed. The sum
        $\displaystyle{(W_1, V_1) \biguplus^P \  (W_2, V_2)}$ will look like Figure~\ref{V},
        except that two disjoint
        solid balls have to be removed from the interior (colour figure online).}

        \label{R}
\end{figure}

\begin{remark}
    The diffeomorphism $\varphi$ which places the core segment of $I_1$ \emph{after}
    the core segment of $I_2$ being well-defined up to isotopy, as well as the stiffenings,
    we deduce that
    the sum is well-defined up to diffeomorphisms fixed on the cylindrical boundary
    of the cylindrical cobordism $\displaystyle{W_1 \biguplus^P W_2}$
    (see Remark \ref{rem:stiffwell} (\ref{rem:natcyl})).
\end{remark}

\bigskip
\section{Embedded summing is a natural geometric operation}
\label{sect:natgeomop}

In this section we prove an extension of Stallings'
Theorem~\ref{thm:stal} to arbitrary dimensions. Namely, we prove
that the embedded sum of two pages of open books is again a page of
an open book (see Theorem \ref{thm:genstal}). We extend this result
to pages of what we call \emph{Morse open books} (see Theorem
\ref{thm:genmorse}). A direct consequence of this theorem is a
generalization to arbitrary dimensions of a theorem proved in
dimension $3$ by Goda.  Both theorems illustrate Gabai's credo that
``Murasugi sum is a natural geometric operation''. Their proofs are
parallel and are based on the fact that, in the case of Seifert
hypersurfaces, the embedded sum as described in Definition
\ref{def:embsumpatch} may be equivalently described using the
operation of sum of cylindrical cobordisms described in Definition
\ref{def:embsum} (see Proposition \ref{prop:samedef}). Technically
speaking, this is the most difficult result of the paper.

\medskip

The following proposition shows that in the case in which one works
with summable patched Seifert hypersurfaces (see Definition
\ref{def:patchseif}), the previous notion of sum of cylindrical
cobordisms gives the same result as the embedded sum of two
patch-cooriented triples with identified patches:

\begin{proposition} \label{prop:samedef}
       Let $(W_1, M_1, P)$ and $(W_2, M_2, P)$ be
         two summable patched Seifert hypersurfaces.
         Then their embedded sum (see Definition  \ref{def:embsumpatch}):
            \[ \displaystyle{M_1 \biguplus^{P} \  M_2} \     \hookrightarrow
                 \  \   \displaystyle{(W_1,M_1) \biguplus^{P} \  (W_2, M_2)} \]
         is diffeomorphic,
         up to isotopy, to the Seifert hypersurface associated to the cylindrical cobordism
         (see definitions \ref{def:circol} and \ref{def:sumstif}):
             \[ \displaystyle{\Sigma_{M_1}(W_1) \biguplus^P  \Sigma_{M_2}(W_2)}. \]
\end{proposition}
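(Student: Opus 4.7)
The plan is to reduce both constructions to the same local surgery on $W_1\sqcup W_2$ by choosing stiffenings that arise from collar neighborhoods of the Seifert hypersurfaces. Using the correspondence between such collars and stiffenings of the splitting (established in the lemma just before Definition~\ref{def:sumstif}), a collar $[-\theta,\theta]\times M_i\hookrightarrow W_i$ of $M_i$ induces a stiffening $V_i$ of $\Sigma_{M_i}(W_i)$ whose core is $[\theta,2\pi-\theta]$ and whose directing segment is obtained by splitting $\bS^1$ at the point of argument~$0$. Restricted to the patch, the rectangles $[0,\theta]\times P\hookrightarrow W_1$ and $[-\theta,0]\times P\hookrightarrow W_2$ are, respectively, a positive and a negative thick patch in the sense of Definition~\ref{def:thickpatch}, and I will use them to carry out both constructions.

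With these choices, the embedded sum of Definition~\ref{def:embsumpatch} removes $\mbox{int}([0,\theta]\times P)$ from $W_1$ and $\mbox{int}([-\theta,0]\times P)$ from $W_2$ and identifies the two boundaries via $\sigma\times\mbox{id}_P$. On the other side, identify the two directing segments by an orientation-preserving diffeomorphism that places $C_1$ \emph{after} $C_2$ and form the sum $\Sigma_{M_1}(W_1)\biguplus^P\Sigma_{M_2}(W_2)$ following Definition~\ref{def:sumstif}. By Lemma~\ref{lem:secdescr}, the circle-collapsed mapping torus of this sum is the ordinary mapping torus glued to $\bD^2\times\partial(M_1\biguplus^P M_2)$.

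The heart of the proof is a piecewise matching of both descriptions. Outside the chosen collars, both coincide with $W_1\sqcup W_2$. Inside the collar region, the cylindrical sum produces three kinds of pieces, namely over $C_1$, over $C_2$, and over the overlap of the two stiffenings in between; the first two attach $\overline{M_{3-i}\setminus P}$ fiberwise to the core of $W_i$ along $A_{3-i}\subset\partial M_i$, while the third replaces each stiffening fiber $M_i$ by the abstract sum $M_1\biguplus^P M_2$. These three pieces assemble, after the mapping-torus identification and the $\bD^2$-filling, into exactly the two collars of $M_1$ and $M_2$ with the chosen thick patches removed, glued along $\partial([0,\theta]\times P)\cong\partial([-\theta,0]\times P)$ by $\sigma\times\mbox{id}_P$. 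The Seifert hypersurface inside the result is the abstract sum of the bases of the summed cobordism, which is $M_1\biguplus^P M_2$ by Definition~\ref{def:circol}.

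The main obstacle is this local matching, which is a careful bookkeeping exercise: one must track the coorientation of $P$ across the identifications and verify that the convention ``$C_1$ lies after $C_2$ along $I$'' is compatible with ``positive thick patch on the first factor, negative thick patch on the second factor''. Once the conventions are lined up, the local models, together with the blow-up/split description of $\Sigma_{M_i}(W_i)$, force the identification, and the diffeomorphism can be seen to be canonical up to isotopy because the choices of collars and of the diffeomorphism of directing segments are themselves so.
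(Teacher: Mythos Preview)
Your overall strategy---choose stiffenings coming from collar neighborhoods of the $M_i$, use the corresponding rectangles over $P$ as the thick patches, and then match the two descriptions piece by piece---is the same as the paper's. But the step you label ``a careful bookkeeping exercise'' about coorientations is not where the actual work lies, and you have skipped the real obstacle.

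The difficulty is entirely concentrated \emph{near the binding}. The circle-collapsed mapping torus of $\Sigma_{M_1}(W_1)\biguplus^P\Sigma_{M_2}(W_2)$ is filled by $\bD^2\times\partial\big(M_1\biguplus^P M_2\big)=\bD^2\times(E_1\cup K\cup E_2)$, whereas each $W_i\simeq\Phi_{\partial M_i}(W_i)$ is filled by $\bD^2\times\partial M_i=\bD^2\times(E_i\cup A_{3-i}\cup K)$. These fillings do not match: the pieces $\bD^2\times A_{3-i}$ disappear, and $\bD^2\times K$ occurs twice on the input side but only once in the output. Your assertion that ``these three pieces assemble, after the mapping-torus identification and the $\bD^2$-filling, into exactly the two collars with thick patches removed'' is therefore not a bookkeeping matter; it is false as stated unless one absorbs the extra filling pieces into the removed region. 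Concretely, what must be removed from $\Phi_{\partial M_1}(W_1)$ is not $I_1\times P$ but $(I_1\times P)\cup(\bD^2\times A_2)\cup(\bD_1\times K)$, where $\bD_1$ is a half-disc, and symmetrically for the second factor. One then has to argue that this union is itself diffeomorphic to an interval times $P$ (this is where the collar-absorption Lemma~\ref{lem:grow} enters, applied twice), so that after the identification one is genuinely back to Definition~\ref{def:embsumpatch}. None of this appears in your outline, and without it the ``local matching'' you invoke does not go through.
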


     \begin{proof}
       We start from the cylindrical cobordisms $\Sigma_{M_1}(W_1)$ and
             $\Sigma_{M_2}(W_2)$, to which we apply Definition \ref{def:embsum}.
             We want to show that the associated Seifert hypersurface is diffeomorphic
             to that obtained using Definition \ref{def:embsumpatch}. In order to achieve this,
             we will show that the circle-collapsed mapping torus of
             $\displaystyle{\Sigma_{M_1}(W_1) \biguplus^P  \Sigma_{M_2}(W_2)}$
             may be obtained from the circle-collapsed mapping tori of
             the factors $\Sigma_{M_i}(W_i)$ by removing codimension $0$
             submanifolds which are diffeomorphic to $[0,1] \times P$, and identifying the resulting
             boundaries appropriately.

      The difficulty is that
             those submanifolds do not appear directly with the desired product
             structures, but as the unions of several codimension $0$ submanifolds.
             It turns out that all of them are endowed with product structures and those
             structures are related in a way which allows us to achieve our aim.

         Rather than  working with the circle-collapsed mapping tori
         $T_c(\Sigma_{M_i}(W_i))$,
   we will use instead the manifolds  obtained by filling the boundaries
   of the mapping tori $T(\Sigma_{M_i}(W_i))$ by the products $\bD^2 \times \partial M_i$.
   As stated in Lemma \ref{lem:secdescr}, those are simply different models of the same
   Seifert hypersurfaces.    Therefore, for $i=1,2$, we denote:
        \[ \Phi_{\partial M_i} (W_i) :=   \Pi_{\partial M_i} (W_i)
              \cup_{\bS^1 \times \partial M_i}   (\bD^2 \times \partial M_i),  \]
    where $\Pi_{\partial M_i} (W_i)$ is the result of piercing $W_i$ along $\partial M_i$
    (see Definition \ref{def:polarblow}) and the two manifolds-with-boundary on the
    right-hand-side are glued through the canonical identifications of their boundaries with
   $\bS^1 \times  \partial M_i$. Similarly, we will fill by a product the boundary of
    $\displaystyle{\Sigma_{M_1}(W_1) \biguplus^P  \Sigma_{M_2}(W_2)}$.

        We choose stiffenings $V_i$ of $\Sigma_{M_i}(W_i)$ and identifications of their
         directing segments that allow us to perform the sum as in Definition \ref{def:sumstif}.

       We may now apply the gluing operations described in the Definition
            \ref{def:sumstif} of the sum of stiffened cylindrical cobordisms with identified
            directing segments. Recall that over
            $I \setminus \ ( \mbox{int}(C_1) \cup \  \mbox{int}(C_2))$ those gluings may be described
            in several ways. The point here is to choose the description which is best adapted
            to our aim.

      Denote $\alpha_{\pm} := \partial_{\pm} I$ and choose a point $\beta \in I$ which
         lies strictly between the two cores $C_1$ and $C_2$. Denote  (see Figure~\ref{M}):
             \[ I_1 := [\alpha_{-}, \beta], \ \ \ I_2 :=   [\beta, \alpha_+].   \]

\begin{figure}[ht]
  \relabelbox \small {
  \centerline{\epsfbox{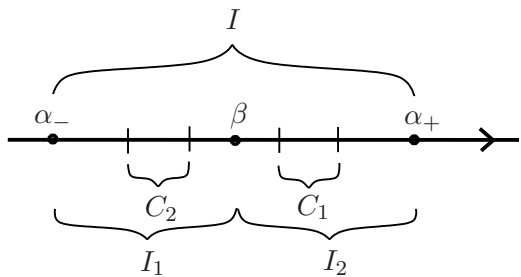}}}

\relabel{1}{$\alpha_-$} \relabel{2}{$\beta$}

\relabel{3}{$\alpha_+$} \relabel{6}{$C_2$} \relabel{7}{$C_1$}

\relabel{4}{$I_1$} \relabel{5}{$I_2$} \relabel{8}{$I$}

\endrelabelbox
        \caption{The interval $I$}
        \label{M}
\end{figure}

      We will do the gluings of Definition \ref{def:sumstif} by removing
             $P$ fiberwise from $\Sigma_{M_i}(W_i)$ over $I_i$, for each $i \in \{1,2 \}$.
             But we interpret the gluing operations directly on the mapping torus
             of $\Sigma_{M_i}(W_i)$. A  simple schematic representation of the 
             operation of summing
             $(M_1, P)$ and $(M_2, P)$ is depicted abstractly in
             Figure~\ref{N}, in order to help the reader following easily Figure~\ref{O}.
             We denote by $E_i$ the closure
             in $\partial M_i$ of $\partial M_i \setminus B_i$, where $B_i$ is the
             non-attaching region of $(M_i, P)$ (see Definition \ref{def:attastruc}),
             and by $K$ the closure of $\partial P \setminus (A_1 \cup A_2)$.

\begin{figure}[ht]
  \relabelbox \small {
  \centerline{\epsfbox{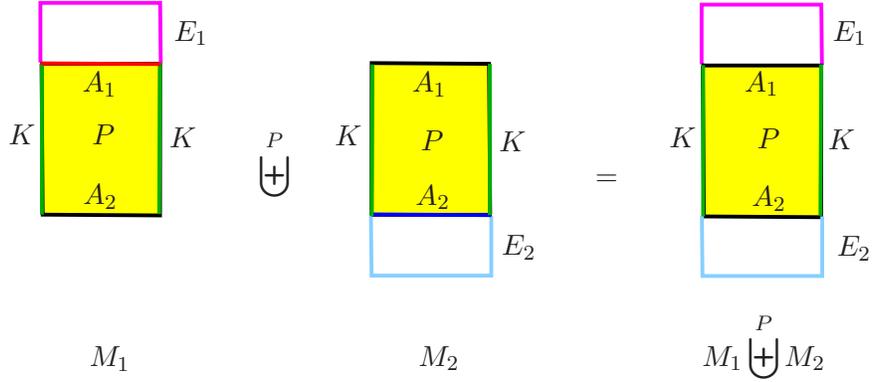}}}

\relabel{1}{$P$} \relabel{2}{$P$}

\relabel{3}{$P$} \relabel{6}{$A_1$} \relabel{p}{$A_1$}
\relabel{q}{$A_1$} \relabel{7}{$A_2$} \relabel{w}{$A_2$}
\relabel{r}{$A_2$}

\relabel{4}{$\displaystyle{\biguplus^{P}}$} \relabel{5}{$=$}

\relabel{8}{$K$} \relabel{x}{$K$} \relabel{y}{$K$} \relabel{z}{$K$}
\relabel{9}{$E_1$} \relabel{a}{$E_2$} \relabel{b}{$E_2$}
\relabel{c}{$E_1$}

\relabel{g}{$K$}

\relabel{h}{$K$}

\relabel{d}{$M_1$} \relabel{e}{$M_2$} \relabel{f}{$\displaystyle{M_1
\biguplus^{P} M_2}$}

\endrelabelbox
       \caption{The schematic representation of $E_i$ and $K$}
        \label{N}
\end{figure}

      The steps of the construction, interpreted using our filled models
            $\Phi_{\partial M_i} (W_i)$ of $(W_i, M_i)$, are:
            \begin{itemize}
                \item For each $i \in \{1, 2 \}$, remove $(I_i \times P) \cup
                       (\bD^2 \times \partial{M_i})$
                   from $\Phi_{\partial M_i}(W_i)$, then take the closure.
                \item Glue through the canonical identification
                    the portions of the resulting boundaries which are isomorphic to (see Figure~\ref{O}):
                      \[ (I_1 \times A_1) \cup (I_2 \times A_2) \cup (\alpha_{\pm} \times P) \cup
                                (\beta \times P). \]
                 \item Fill then the resulting boundary by:
                       \[ \bD^2 \times \partial(\ds{M_1 \biguplus^P \ M_2}) = (\bD^2 \times E_1) \cup
                      (\bD^2 \times K) \cup (\bD^2 \times E_2). \]
            \end{itemize}

\begin{figure}[ht]
  \relabelbox \small {
  \centerline{\epsfbox{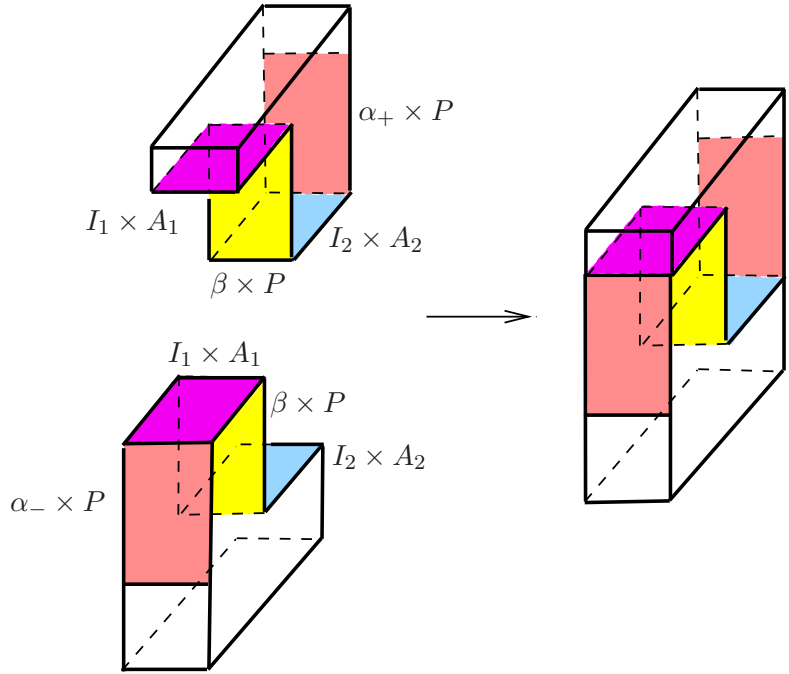}}}

\relabel{1}{$I_1 \times A_1$} \relabel{2}{$\beta \times P$}

\relabel{3}{$I_2 \times A_2$}

\relabel{4}{$\alpha_+ \times P$}

\relabel{8}{$I_1 \times A_1$} \relabel{6}{$\beta \times P$}

\relabel{7}{$I_2 \times A_2$}

\relabel{5}{$\alpha_- \times P$}

\endrelabelbox
        \caption{This Figure is to be compared with Figure \ref{V}}
        \label{O}
\end{figure}

  Note that  the pieces
  $\bD^2 \times E_i \hookrightarrow \bD^2 \times \partial M_i$ are first removed,
  then inserted back into the same position (that is, we glue exactly as before to the adjacent
  pieces). Therefore, we obtain the same final result without touching them.

        Instead, the piece $\bD^2 \times K$ is removed \emph{twice} and put back only
            \emph{once}. One may obtain the same result by cutting the disc $\bD^2$
            into two half-discs $\bD_1$ and $\bD_2$, as represented in Figure~\ref{P}, and
            only removing two conveniently chosen complementary half-discs. Namely,
            we will remove $\bD_i \times K$ from $\Phi_{\partial M_i} (W_i)$.

\begin{figure}[ht]
  \relabelbox \small {
  \centerline{\epsfbox{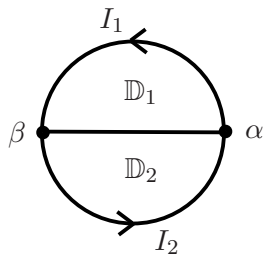}}}

\relabel{1}{$\beta$} \relabel{2}{$\alpha$}

\relabel{3}{$I_1$} \relabel{6}{$\bD_2$}

\relabel{4}{$I_2$} \relabel{5}{$\bD_1$}

\endrelabelbox
        \caption{Splitting the unit disk}
        \label{P}
\end{figure}

         The reinterpreted construction is:
                  \begin{itemize}
                \item Remove $(I_1 \times P) \cup (\bD^2 \times A_2) \cup (\bD_1 \times K)$
                   from $\Phi_{\partial M_1}(W_1)$, then take the closure. Symmetrically,
                   remove $(I_2 \times P) \cup (\bD^2 \times A_1) \cup (\bD_2 \times K)$
                   from $\Phi_{\partial M_2}(W_2)$, then take the closure.
                \item Glue the resulting boundaries through the canonical identification.
            \end{itemize}

            Notice now that $(I_1 \times P) \cup (\bD^2 \times A_2) \cup (\bD_1 \times K)$
            is isomorphic to $I_1 \times P$, and similarly
            $(I_2 \times P) \cup (\bD^2 \times A_1) \cup (\bD_2 \times K)$ is isomorphic
            to $I_2 \times P$. Indeed, in each case we may apply Lemma
            \ref{lem:grow} twice to end up with a description as in Definition
            \ref{def:embsumpatch}.
    \end{proof}

We leave the proof of the following intuitively clear lemma to the
reader:

\begin{lemma} \label{lem:grow}
  Let $Q$ be a manifold-with-boundary and $B \hookrightarrow \partial Q$
  be a full-dimensional submanifold-with-boundary of the boundary.
  Then the result of gluing $[0, 1] \times B$ to $Q$ through the canonical
  identification of $0 \times B$ with $B$ is isomorphic to $Q$ through
  an isomorphism which is the identity outside an arbitrarily small neighborhood
  of $B$ in $Q$.
\end{lemma}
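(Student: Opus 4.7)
The plan is to invoke the collar neighborhood theorem for $\partial Q$, restrict the resulting collar to $B$, and then reparametrize to absorb the externally attached copy of $[0,1]\times B$ into the internal collar.

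First, I would pick an embedding $c:[0,1]\times\partial Q\hookrightarrow Q$ with $c(0,y)=y$ for all $y\in\partial Q$, whose image can be shrunk to lie in an arbitrarily small neighborhood of $\partial Q$. Denoting by $Q^{\sharp}$ the manifold obtained by gluing $[0,1]\times B$ to $Q$ along the canonical identification $\{0\}\times B\simeq B$, the restriction $c|_{[0,1]\times B}$ is an internal collar of $B$ sitting inside $Q\subset Q^{\sharp}$. Together with the externally glued piece, it produces inside $Q^{\sharp}$ a ``double collar'' diffeomorphic to $[-1,1]\times B$, where $[-1,0]\times B$ is the attached piece, $[0,1]\times B$ is the image of $c|_{[0,1]\times B}$, and the two agree on $\{0\}\times B=B$.

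Next, I would choose a smooth orientation-preserving diffeomorphism $f:[-1,1]\to[0,1]$ with $f(-1)=0$ and with $f(t)=t$ on a neighborhood of $t=1$; such an $f$ exists by standard smooth interpolation. The product map $(t,b)\mapsto(f(t),b)$ then gives a diffeomorphism from the double collar $[-1,1]\times B\subset Q^{\sharp}$ onto the single internal collar $[0,1]\times B\subset Q$, and it coincides with the identity near $\{1\}\times B$. Extending by the identity on the complement of the double collar defines a diffeomorphism $\phi:Q^{\sharp}\to Q$; by choosing $c$ with sufficiently small image we can arrange the support of the nontrivial part of $\phi$ to lie in any prescribed neighborhood of $B$ in $Q$.

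The one point requiring care is the corner locus of $Q^{\sharp}$ along $\partial B$, where the lateral face $[0,1]\times\partial B$ of the attached piece meets $\partial Q$ transversally. This is precisely the kind of corner that is smoothed by the folklore construction recalled at the beginning of Section~\ref{sec:notconv}; after smoothing, the diffeomorphism $\phi$ descends (it already sends the double collar onto a smooth collar inside $Q$, so the corner-smoothing on the source side is carried to a compatible smoothing on the target side, both being unique up to isotopy). Apart from this bookkeeping, the argument is routine, and the construction is manifestly local near $B$, which secures the ``identity outside an arbitrary neighborhood'' conclusion.
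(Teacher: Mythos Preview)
The paper does not actually prove this lemma --- it states explicitly that the proof of this ``intuitively clear lemma'' is left to the reader --- so there is nothing to compare your approach against. Your strategy (collar plus reparametrization) is the natural one, but as written it has a genuine gap.

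The problem is with the sentence ``Extending by the identity on the complement of the double collar defines a diffeomorphism $\phi:Q^{\sharp}\to Q$.'' Your double collar $[-1,1]\times B$ has, besides the face $\{1\}\times B$ where you checked compatibility, a lateral face $[0,1]\times\partial B = c([0,1]\times\partial B)$ sitting in the \emph{interior} of $Q$. Along this face your map is not continuous: approaching a point $c(t_0,b_0)$ with $b_0\in\partial B$ and $0<t_0<1$ from within the double collar yields image $c(f(t_0),b_0)$, whereas approaching it from the complement (where $\phi$ is the identity) yields $c(t_0,b_0)$; since $f(t_0)\neq t_0$ in general, these disagree. (A $2$-dimensional picture makes this vivid: take $Q$ a half-plane, $B$ a half-line in its boundary, and watch the shear along the line where $B$ ends.) This defect is \emph{not} cured by the corner-smoothing you invoke in the last paragraph: those corners live on $\partial Q^{\sharp}$, while the discontinuity lies inside $\mbox{int}(Q)$.

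The repair is not hard but does require an extra idea. One cannot simply damp $f$ to the identity near $\partial B$, since the external strip over a neighbourhood of $\partial B$ must still land somewhere in $Q$. The standard remedy is to let that part of the strip bend into a collar of $\partial B$ taken inside $\overline{\partial Q\setminus B}$ rather than into the inward collar of $B$: choose both a collar $[0,1]\times\partial Q\hookrightarrow Q$ and a collar $[0,1]\times\partial B\hookrightarrow\overline{\partial Q\setminus B}$, and build $\phi$ so that near $\partial B$ the image of $\{-t\}\times B$ interpolates between the inward push (away from $\partial B$) and a push along $\partial Q$ into $\overline{\partial Q\setminus B}$ (near $\partial B$). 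Equivalently, after smoothing the corners of $Q^{\sharp}$ one may first identify $\partial Q^{\sharp}$ with $\partial Q$ via the boundary-level version of the same lemma (where now $\partial B$ is closed, so your argument applies without the lateral-face issue), and then run the collar-compression globally over all of $\partial Q$ rather than only over $B$.
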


Here is our generalization of Stallings' Theorem \ref{thm:stal}
(recall that the notion of open book was explained in Definition
\ref{def:openbook}):

\begin{theorem}   \label{thm:genstal}
   Let $(W_i, M_i, P)_{i = 1,2}$ be two summable patched Seifert hypersurfaces
   which are pages of open books on the closed manifolds $W_i$.
   Then the Seifert hypersurface associated to the sum
   $\displaystyle{(W_1, M_1) \biguplus^P  (W_2, M_2)}$
   is again a page of an open book. Moreover, the geometric monodromy
   of the resulting open book is the composition $\phi_1 \circ \phi_2$ of the monodromies
   of the initial open books. Here $\phi_i : M_i \to M_i$
   is extended to $\displaystyle{M_1 \biguplus^P M_2}$ by the identity on
   $(\displaystyle{M_1 \biguplus^P M_2} )\setminus M_i$.
\end{theorem}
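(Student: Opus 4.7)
The strategy is to reduce the embedded-summing statement, via Proposition \ref{prop:samedef}, to a question about cylindrical cobordisms, and then to exploit the fibered structure forced by the open book hypothesis. By Proposition \ref{prop:samedef} together with Proposition \ref{prop:invop}, the splitting of $\displaystyle{(W_1, M_1) \biguplus^P (W_2, M_2)}$ along the associated Seifert hypersurface $\displaystyle{M_1 \biguplus^P M_2}$ is canonically isomorphic to the sum of cylindrical cobordisms $\displaystyle{\Sigma_{M_1}(W_1) \biguplus^P \Sigma_{M_2}(W_2)}$. By the characterization of open books at the end of Section \ref{sect:Seifsect}, it therefore suffices to verify that this summed cylindrical cobordism admits a fibration over its directing segment with fiber $\displaystyle{M_1 \biguplus^P M_2}$, and to read off the induced monodromy on the circle-collapsed mapping torus.

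For the first point, the open book hypothesis says that each $\Sigma_{M_i}(W_i)$ is fiberwise trivial over $I$, i.e., isomorphic as a cylindrical cobordism to $I \times M_i$ so that the projection onto $I$ is the fibration giving rise to $\theta_i$. Take stiffenings $V_i$ consisting of the entire trivialization, with disjoint cores $C_1, C_2 \subset I$ chosen so that $C_2$ precedes $C_1$, as required in Definition \ref{def:embsum}. Carrying out Definition \ref{def:sumstif} fiberwise, the first bullet contributes the abstract fiberwise sum $\displaystyle{M_1 \biguplus^P M_2}$ over $I \setminus (\mbox{int}(C_1) \cup \mbox{int}(C_2))$; the second and third bullets glue $C_j \times \overline{M_{3-j} \setminus P}$ to $C_j \times M_j$ along $C_j \times A_{3-j}$, each yielding $\displaystyle{C_j \times (M_1 \biguplus^P M_2)}$ by Remark \ref{rem:varem} (\ref{rem:welldef}). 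These pieces match along their fibered boundaries to produce the cylinder $\displaystyle{I \times (M_1 \biguplus^P M_2)}$, whose projection to $I$ descends to the required fibration over $\mathbb{S}^1$ after passing to the circle-collapsed mapping torus.

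For the monodromy computation, one observes that each $\phi_i$ restricts to the identity on $\partial M_i$, and the summability condition $A_1 \cap A_2 = \emptyset$ forces $A_{3-i} \subset \partial M_i$, so the extension $\tilde{\phi}_i$ of $\phi_i$ by the identity on $\displaystyle{(M_1 \biguplus^P M_2) \setminus M_i}$ is well-defined as a self-diffeomorphism of $\displaystyle{M_1 \biguplus^P M_2}$ fixing its boundary. Tracking the fibered identifications along the directing segment $I$ from $\partial_- I$ to $\partial_+ I$, the monodromy of the resulting mapping torus assembles from the twist $\tilde{\phi}_2$ contributed over the core $C_2$ (encountered first) and the twist $\tilde{\phi}_1$ contributed over $C_1$ (encountered second), yielding $\tilde{\phi}_1 \circ \tilde{\phi}_2$ as the geometric monodromy of the resulting open book; this is precisely the composition claimed in the theorem. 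The main technical obstacle has already been overcome inside the proof of Proposition \ref{prop:samedef}; the only subtlety remaining here is the fiberwise bookkeeping of the three pieces appearing in Definition \ref{def:sumstif}, which is simply the abstract version of the picture underlying Etnyre's proof recalled in Section \ref{sec:geomproof}.
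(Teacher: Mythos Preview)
Your proof is correct and follows essentially the same route as the paper's: reduce via Proposition \ref{prop:samedef} to the sum of the splittings $\Sigma_{M_i}(W_i)$, use the open book hypothesis to equip each splitting with a fibration over the directing segment (the paper phrases this as lifting $\theta_i$ to $\Sigma(\tilde\theta_i):\Sigma_{M_i}(W_i)\to[0,2\pi]$), choose stiffenings compatible with these fibrations, and observe that Definition \ref{def:sumstif} glues the two height functions into a global fibration with fiber $M_1\biguplus^P M_2$. Your treatment is slightly more explicit in checking the three bullets of Definition \ref{def:sumstif} fiber by fiber and in tracking the monodromy (the paper simply defers the latter to the $3$-dimensional argument of Section \ref{sec:geomproof}), but there is no substantive difference in strategy.
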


    \begin{proof} Consider the splittings $\Sigma_{M_1}(W_1)$ and
         $\Sigma_{M_2}(W_2)$ of $W_1, W_2$ along the two pages.
         Let $(\partial M_i, \theta_i)$ be an open book on $W_i$
         such that $M_i= \theta_i^{-1}(0)$ (that is, such that $M_i$ is
         the page of argument $0$). The map $\theta_i: W_i \setminus \partial M_i \to \bS^1$
         lifts to an everywhere defined map $\tilde{\theta}_i : \Pi_{\partial M_i} W_i \to \bS^1$
         which is moreover a locally trivial fiber bundle projection. Therefore, it lifts
         to another fiber bundle projection:
            \[ \Sigma(\tilde{\theta}_i) : \Sigma_{M_i} W_i \to [0, 2 \pi] \]
           where the interval $[0, 2 \pi]$ is obtained by splitting the circle $\bS^1$ at the
           point of argument $0$.

          One may choose as stiffening of $\Sigma_{M_i} W_i$ a preimage
             $\Sigma(\tilde{\theta}_i)^{-1}([0, 2 \pi] \setminus \mbox{int}(C_i))$,
             where $C_i \subset (0, 2 \pi)$ is an arbitrary   compact segment with non-empty
             interior. Moreover, in order to get the hypothesis of Definition \ref{def:sumstif},
             we assume that $C_2$ and $C_1$ are disjoint and situated in this order
             on the segment
             $[0, 2 \pi]$ endowed with its usual orientation. One may take as height functions
             the projections $\Sigma(\tilde{\theta}_i)$ themselves.

             Definition \ref{def:sumstif} shows  that the two height functions glue
                into  a new globally defined height function:
                    \[ h:  \displaystyle{\Sigma_{M_1}(W_1) \biguplus^P  \Sigma_{M_2}(W_2) }\to [0, 2 \pi] \]
                which is again a fiber bundle projection. Its generic fiber is isomorphic to
                $\displaystyle{M_1 \biguplus^P M_2}$. Therefore, the associated
                Seifert hypersurface is again an open book, with page isomorphic to
                $\displaystyle{M_1 \biguplus^P M_2}$.

          But, by Proposition \ref{prop:samedef}, this Seifert hypersurface is
            isomorphic  to:
               \[  \displaystyle{M_1 \biguplus^{P}  M_2}    \hookrightarrow
                \displaystyle{(W_1,M_1) \biguplus^{P} \  (W_2, M_2)}. \]
             The proof of the last statement in the theorem is similar to the proof in the
             $3$-dimensional case (see Section \ref{sec:geomproof}).
    \end{proof}

Up to diffeomorphisms, all the choices of pages in an open book are
equivalent. Therefore, the previous theorem allows to define a
notion of \emph{sum} (generalized Murasugi sum) for open books:

\begin{definition}  \label{def:sumob}
    Assume that $(K_i, \theta_i)_{i = 1,2}$ are open book structures on the closed manifolds
$W_i$ of the same dimension.  Let $M_i$ be pages of them, and $P$ a common
patch of $M_1$ and $M_2$. Assume that $(M_1, P)$ and $(M_2, P)$ are summable.
The {\bf sum} of the two open books is the open book on
$\displaystyle{(W_1, M_1) \biguplus^P  \  (W_2, M_2)}$ constructed in the previous proof.
\end{definition}

The previous theorem may be extended to structures which are analogous to
open books, in the sense that they have bindings and are similar to open
books near them, but which are allowed to have Morse singularities away
from the bindings:

\begin{definition} \label{def:morseopenbook}
   A {\bf Morse open book} in a closed manifold $W$ is a pair  $(K,  \theta)$ consisting of:
\begin{enumerate}
     \item a codimension $2$ submanifold $K \subset W$, called the {\bf binding},
        with a trivialized normal bundle;
     \item  a map $\theta: W \setminus K \to \bS^1$ which, in a tubular neighborhood
         $\bD^2 \times K $ of $K$ is the normal angular coordinate,
         and which has only Morse critical points.
        The closure of any fiber $\theta^{-1}(\theta_0)$ is a {\bf page} of the Morse
        open book. A page is called {\bf regular} if $\theta_0$ is a regular value of
        $\theta$ and {\bf singular} otherwise.
 \end{enumerate}
\end{definition}

\begin{remark}
\begin{enumerate}
    \item The previous definition extends to arbitrary dimensions the notion of
      ``\emph{regular Morse map}'' introduced in dimension $3$ by Weber, Pajitnov
      and Rudolph in \cite{WPR 02}.
    \item The regular pages of Morse open books are Seifert hypersurfaces.
    Conversely, any Seifert hypersurface is a regular page of a Morse open
    book.  Therefore, the problem of defining and finding the minimal complexity
     of such a Morse open book arises naturally, which motivates the rest
     of this section.
    \item All the pages of a classical open book are diffeomorphic,  but this is certainly not
      true for a Morse open book which has a singular page.  Even if one considers 
      only the regular pages of a Morse open book, we may be sure that they are diffeomorphic  
      only if they are preimages of points which belong to the same connected
      component of the complement of the critical image of $\theta$ inside
      $\bS^1$.
\end{enumerate}
\end{remark}

One has the following extension to this setting of
Theorem \ref{thm:genstal}:

\begin{theorem}  \label{thm:genmorse}
    Let $(W_i, M_i, P)_{i = 1,2}$ be two summable patched Seifert hypersurfaces
   which are regular pages of Morse open books on the closed manifolds $W_i$.
   Then the Seifert hypersurface associated to the sum
    $\displaystyle{(W_1, M_1) \biguplus^P \  (W_2, M_2)}$
   is again a regular page of a Morse open book, whose multigerm of
   singularities is isomorphic to the disjoint union of the multigerms of
   singularities of the initial Morse open books.
\end{theorem}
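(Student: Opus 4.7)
My plan is to parallel the proof of Theorem~\ref{thm:genstal} as closely as possible, exploiting the fact that, since $M_i$ is a \emph{regular} page, the map $\theta_i$ is submersive on a neighborhood of $M_i$ in $W_i$ and hence the critical points of its lift to the splitting may be confined to a small interval in the directing segment.

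More precisely, for each $i=1,2$, let $(K_i,\theta_i)$ be the given Morse open book with $K_i=\partial M_i$ and $M_i=\overline{\theta_i^{-1}(0)}$. As in the proof of Theorem~\ref{thm:genstal}, pass to the radial blow-up $\Pi_{\partial M_i}(W_i)$, on which $\theta_i$ lifts to an everywhere defined Morse map $\tilde\theta_i:\Pi_{\partial M_i}(W_i)\to\mathbb{S}^1$ (still having trivial behaviour near the exceptional boundary $\mathbb{S}^1\times\partial M_i$), and then to the splitting:
\[
\Sigma(\tilde\theta_i):\Sigma_{M_i}(W_i)\to[0,2\pi].
\]
Since $0$ is a regular value of $\theta_i$, there is some $\varepsilon>0$ so that $\Sigma(\tilde\theta_i)$ has no critical points over $[0,\varepsilon]\cup[2\pi-\varepsilon,2\pi]$; set the stiffening $V_i$ of $\Sigma_{M_i}(W_i)$ to be $\Sigma(\tilde\theta_i)^{-1}([0,\varepsilon]\cup[2\pi-\varepsilon,2\pi])$, and take as core $C_i$ a compact subsegment of $(\varepsilon,2\pi-\varepsilon)$ which contains the critical values of $\Sigma(\tilde\theta_i)$ in its interior. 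Identify the directing segments $[0,2\pi]$ of the two cobordisms by an orientation-preserving diffeomorphism placing $C_1$ strictly after $C_2$, as required by Definition~\ref{def:embsum}; this is possible by shrinking $C_1$ and $C_2$ further if necessary.

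Now perform the sum of Definition~\ref{def:sumstif}. Over the stiffening region $I\setminus(\mathrm{int}(C_1)\cup\mathrm{int}(C_2))$, each $\Sigma(\tilde\theta_i)$ is a trivial fibration by our choice of $V_i$, so fiberwise summing $(M_1,P)$ and $(M_2,P)$ produces a trivial fibration with fiber $\displaystyle{M_1\biguplus^P M_2}$, giving a smooth Morse-free contribution. Over $C_1$, we glue $C_1\times\overline{M_2\setminus P}$ fiberwise to $\Sigma_{M_1}(W_1)|_{C_1}$ along $C_1\times A_2\hookrightarrow C_1\times\partial M_1$; projecting to $C_1$, this is a \emph{product extension} of $\Sigma(\tilde\theta_1)|_{C_1}$ by the fixed manifold $\overline{M_2\setminus P}$, and so the glued map has precisely the same critical points as $\Sigma(\tilde\theta_1)|_{C_1}$, with the same Morse germs (the quadratic form is unchanged, just stabilized trivially by its value at an interior point of $\overline{M_2\setminus P}$). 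Symmetrically for $C_2$. Since $C_1\cap C_2=\emptyset$, the critical loci of the two summands are mapped to disjoint arcs in $I$, and together give exactly the disjoint union of the two multigerms.

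The resulting height function:
\[
h:\Sigma_{M_1}(W_1)\biguplus^P\Sigma_{M_2}(W_2)\longrightarrow[0,2\pi]
\]
is therefore a smooth Morse map, trivially fibered over a neighborhood of $\partial[0,2\pi]$, with regular fiber $\displaystyle{M_1\biguplus^P M_2}$ and critical set identified with the disjoint union of the critical sets of the two initial Morse maps. By Proposition~\ref{prop:invop} and Lemma~\ref{lem:secdescr}, passing to the circle-collapsed mapping torus endows the resulting closed manifold with a Morse open book whose regular page of argument $0$ is the Seifert hypersurface associated to $\displaystyle{\Sigma_{M_1}(W_1)\biguplus^P\Sigma_{M_2}(W_2)}$; and by Proposition~\ref{prop:samedef} this hypersurface is isotopic to the embedded sum $\displaystyle{M_1\biguplus^P M_2\hookrightarrow(W_1,M_1)\biguplus^P(W_2,M_2)}$. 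The hardest ingredient to pin down cleanly will be the last part of the second paragraph: verifying that the fiberwise gluing over each $C_i$ really is a smooth product extension, so that no new critical points appear at the gluing seams and the Morse germs of the summands survive unchanged. Once that local model is in hand, the statement about the multigerm follows immediately from the disjointness of $C_1$ and $C_2$.
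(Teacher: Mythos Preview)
Your proposal is correct and follows essentially the same approach as the paper, which simply says to argue as in Theorem~\ref{thm:genstal} but to choose the core intervals $C_i$ so that their interiors contain the critical values of $\Sigma(\tilde\theta_i)$, thereby leaving neighborhoods of the critical points untouched. One small wording issue: the Morse germs over $C_i$ are not ``stabilized trivially'' but literally unchanged, since the critical points lie in the interior of $\Sigma_{M_i}(W_i)$, away from the cylindrical boundary where the gluing is performed.
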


  \begin{proof}
        One may reason along the same lines as in the proof of Theorem \ref{thm:genstal}.
        The difference is that one has to choose now the core intervals $C_i$ such that
        $\mbox{int}(C_i)$ contains the critical values of the maps $\Sigma(\tilde{\theta}_i)$.
        One does not touch the neighborhoods of the critical points of the two Morse maps,
        which ensures that the new set of singularities are the disjoint unions of the two initial
        sets of singularities.
  \end{proof}

Inspired by the \emph{Morse-Novikov number} attached to a Seifert
surface in \cite[Section 6]{WPR 02},  we introduce the following
invariants in order to measure how far a Seifert hypersurface is to
being a page of an open book:

\begin{definition} \label{def:Morsenumb}
      Let $M \hookrightarrow W$ be a Seifert hypersurface in the closed manifold $W$
      of dimension $w \geq 1$. For each  $k \in \{1, ..., w-1\}$, denote by
      $m_k(W, M)$ be the minimal number of critical points of index $k$ of a map
      $\theta : W \setminus \ \partial M \to \bS^1$ such that $(\partial M, \theta)$ is
      a Morse open book, and $M$ is a regular page.
      We call it the $k$-th {\bf Morse number} of $(W, M)$.
\end{definition}

As an immediate consequence of Theorem \ref{thm:genmorse}, we have:

\begin{proposition}
         Let $(W_i, M_i, P)_{i = 1,2}$ be two summable patched Seifert hypersurfaces
         in the closed manifolds $(W_i)_{i = 1,2}$ of the same dimension $w \geq 1$.
         Then:
             \[ m_k(\displaystyle{(W_1, M_1) \biguplus^P  \  (W_2, M_2)} ) \leq
                     m_k(W_1, M_1) + m_k(W_2, M_2)   \]
         for each  $k \in \{1, ..., w-1\}$.
\end{proposition}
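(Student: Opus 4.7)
The plan is to realize the bound by constructing a specific Morse open book on the summed manifold, whose index-$k$ critical points come exactly from the chosen optimal Morse open books on the factors, and then invoke the definition of $m_k$ as a minimum.

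First, I would choose, for each $i \in \{1,2\}$, a map $\theta_i : W_i \setminus \partial M_i \to \bS^1$ such that $(\partial M_i, \theta_i)$ is a Morse open book, $M_i$ is a regular page of it, and $\theta_i$ has exactly $m_k(W_i, M_i)$ critical points of index $k$. Such a $\theta_i$ exists by the very definition of the $k$-th Morse number (Definition \ref{def:Morsenumb}), noting that the minimum in that definition is attained because $m_k(W_i, M_i)$ is an integer and the set of Morse open books with $M_i$ as a regular page is non-empty (any Seifert hypersurface is a regular page of some Morse open book, as remarked just after Definition \ref{def:morseopenbook}).

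Next, since the triples $(W_i, M_i, P)$ are summable patched Seifert hypersurfaces and each $M_i$ is a regular page of a Morse open book on $W_i$, I would apply Theorem \ref{thm:genmorse} directly. This produces a Morse open book on $\displaystyle{(W_1, M_1) \biguplus^P (W_2, M_2)}$ having $\displaystyle{M_1 \biguplus^P M_2}$ as a regular page, and whose multigerm of critical points is isomorphic to the disjoint union of the multigerms of critical points of $\theta_1$ and $\theta_2$. In particular, Morse indices are preserved under this disjoint union identification, so the total number of critical points of index $k$ of the resulting map equals $m_k(W_1, M_1) + m_k(W_2, M_2)$.

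Finally, by the definition of $m_k$ applied to the summed Seifert hypersurface, the value $m_k\big(\displaystyle{(W_1, M_1) \biguplus^P (W_2, M_2)}\big)$ is the minimum of the number of index-$k$ critical points over all admissible Morse maps. Since we have exhibited one such admissible map with $m_k(W_1, M_1) + m_k(W_2, M_2)$ critical points of index $k$, the desired inequality follows. The argument presents no real obstacle beyond checking that the disjoint-union description of the multigerm in Theorem \ref{thm:genmorse} respects Morse indices, which is immediate since each critical point of the summed map lies in a neighborhood isomorphic to a neighborhood of the corresponding critical point of one of the $\theta_i$.
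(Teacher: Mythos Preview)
Your argument is correct and is precisely the unpacking of what the paper means when it states that the proposition is ``an immediate consequence of Theorem \ref{thm:genmorse}'': for each fixed $k$, pick Morse open books realizing the minima $m_k(W_i,M_i)$, apply Theorem \ref{thm:genmorse}, and observe that the disjoint-union description of the singular multigerm preserves Morse indices so that the summed map witnesses the claimed upper bound.
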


As explained in the introduction of \cite{HR 03}, this theorem was
proved in dimension $3$ by Goda \cite{G 92}, under a different but
equivalent formulation.

\bigskip
\section{Questions related to contact topology and singularity theory}
\label{sec:ques}

We conclude this paper with a list of questions. Almost all of them concern the
sum of open books and its relations with singularity theory and contact topology.
That is why we recall briefly the basics of those relations, developing part of the
information given in Remark \ref{rem:eqfib} (\ref{rem:histob}).

\medskip

 Consider a germ of polynomial function $f: (\C^n, 0) \to (\C, 0)$
which has an isolated singularity at the origin. Let $\bS^{2n-1}(r)
\hookrightarrow \C^n$ be the Euclidean sphere of radius $r >0$
centered at the origin. The argument of $f$ is well-defined outside
the $0$-level of $f$. Look at the restrictions of both objects to
the sphere $\bS^{2n-1}(r)$:
    \[ K := f^{-1}(0) \cap \bS^{2n-1}(r), \ \  \ \  \theta :  \bS^{2n-1}(r) \setminus \  K \to \bS^1. \]
Milnor proved in \cite{M 68} that $(K, \theta)$ is an open book on $\bS^{2n-1}(r)$,
whenever $r$ is sufficiently small. This result was extended by Hamm \cite{H 71}
to \emph{holomorphic} functions $f$ with isolated singularity, defined on
any germ of complex analytic space $(X,0)$ which is non-singular in the complement
of the base point $0$. In this case, one replaces $\bS^{2n-1}(r)$ by the intersection $M(r)$
of $X$ with a sphere of sufficiently small radius $r$, centered at $0$, once $(X,0)$ was embedded
in some affine space $(\C^N, 0)$. For $r >0$ small enough,
one gets in this way open books $(K, \theta)$ on
$M(r)$. In \cite{CNP 06}, such open books originating in singularity
theory were called \emph{Milnor open books}.

In 2002 Giroux \cite{Gi 02} launched a program of study of contact
topology through open books. Namely, he described a particularly
adapted mutual position of a contact structure and an open book on
any closed $3$-dimensional manifold, saying that, in that case,
\emph{the open book supports the contact structure}. In fact, in
1975  Thurston and Winkelnkemper \cite{TW 75} proved that any open
book supports a contact structure. Conversely, Giroux showed that
any contact structure is supported by some open book. Moreover, he
proved that two open books which support the same contact structure
are stably equivalent, that is, one may arrive at the same open book
by executing finite sequences of Murasugi sums with positive Hopf
bands, starting from each one of the initial open books.

In the same paper, Giroux sketched an extension of this theory to
higher dimensions. In particular, he defined higher dimensional
analogs of supporting open books. In this case, if one wants to
construct a contact structure starting from an open book, one has to
enrich it with symplectic-topological structures. Namely, the pages
are to be \emph{Weinstein manifolds} (see the recent monograph
\cite{CE 12} for a detailed exploration of this notion), and there
should exist a geometric monodromy respecting in some sense the
Weinstein structure.

In 2006, the paper \cite{CNP 06} of Caubel, N\'emethi and the second
author
 related the two instances where open books appear naturally: singularity theory and contact topology. Note that there are
canonical contact structures on the manifolds $M(r)$, as they are
level sets of a strictly plurisubharmonic function (the square of
the distance to $0$) on the complex manifold ($X \setminus 0$). In
\cite{CNP 06}, it was  proved that the Milnor open book of any
function $f : (X,0) \to (\C, 0)$
with an isolated singularity at $0$ supports the canonical contact structure,
whenever the radius $r$ is sufficiently small. This
generalized an analogous result proved before by Giroux \cite{Gi 03}, 
for the case where $X$ is smooth and where instead of round
spheres, deformed ones are chosen  adapted to a given
holomorphic germ $f$ with isolated singularity.

\medskip
 Here are our questions:

\begin{enumerate}
     \item
         An open book is considered to be \emph{trivial} if its page is a
         smooth ball and its geometric monodromy is the identity. 
         We call an open book \emph{indecomposable} if it cannot be written
         in a non-trivial way as a sum of open books (see Definition
         \ref{def:sumob}).
         Find sufficient criteria of indecomposability.
    \item
       Find sufficient criteria on germs of holomorphic functions
       $f: (X,0) \to (\C, 0)$ with isolated singularity to define indecomposable open books.
    \item
        Find natural situations leading to triples $(X_i, f_i)_{1 \leq 1 \leq 3}$
       of isolated singularities and holomorphic functions with isolated singularities
       on them, such that the Milnor open book of $(X_3, f_3)$ is a sum of the
       Milnor open books of $(X_1, f_1)$ and $(X_2, f_2)$.
    \item
       Consider an open book and a contact structure supported by this open book on a closed
       manifold. Describe an adapted
       position of a patch inside a page, relative to the contact structure, allowing
       to extend the operation of sum of open books to a sum of open books
       which support contact structures. Also, prove an analog  of
       the following result using appropriate patches in higher dimensions:
\begin{theorem}[Torisu \cite{T 00}]\label{thm:tor}
        Let $\xi_i$ be the contact structure on a $3$-manifold $M_i$
       supported by the open book $(\Sigma_i, \phi_i)$, for $i=1,2$. Then the connected sum
      $(M, \xi)=(M_1, \xi_1) \# (M_2, \xi_2)$ is supported by the open
    book $(\Sigma, \phi)$, where $\Sigma$ is the Murasugi sum of
    $\Sigma_1$ and $\Sigma_2$ and $\phi=\phi_1 \circ \phi_2.$
\end{theorem}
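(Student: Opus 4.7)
The plan is to apply the uniqueness half of Giroux's correspondence: two contact structures supported by isotopic open books are themselves isotopic. Combined with Theorem~\ref{thm:genstal}, which guarantees that the Murasugi sum $(\Sigma, \phi) = (\Sigma_1 + \Sigma_2,\; \phi_1 \circ \phi_2)$ is indeed an open book on $M_1 \# M_2$, it therefore suffices to construct a single supporting contact form $\alpha$ on $M_1 \# M_2$ for $(\Sigma,\phi)$ whose kernel is isotopic to the contact connected sum $\xi_1 \# \xi_2$.

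First I would choose Thurston--Winkelnkemper contact forms $\alpha_i$ for $\xi_i$ adapted to $(\Sigma_i, \phi_i)$, so that $\alpha_i > 0$ along the binding $K_i$ and $d\alpha_i$ is a positive area form on each page. Using the fact that supporting contact forms for a fixed open book form a contractible space, I may arrange that each $\alpha_i$ coincides on a neighborhood of the plumbing arc $C_i \subset \Sigma_i$ with an explicit standard contact form pulled back from the local model $(\R^3,\tilde\theta)$ of Section~\ref{sec:geomproof}. I would then glue $\alpha_1$ and $\alpha_2$ along the ellipsoids $E_i = \psi_i^{-1}(E)$ via the diffeomorphism $\rho$, exploiting the key identity $\tilde\theta \circ \tilde\rho = \tilde\theta + \pi$, which makes $\tilde\rho$ compatible with the contact geometry of the model. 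This produces a well-defined 1-form $\alpha$ on $M_1 \# M_2$.

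Next I would verify that $\alpha$ supports $(\Sigma, \phi)$. Outside the balls $B_i$ removed during the connected sum, the support conditions are immediate since $\alpha = \alpha_i$ there. Inside $B$, the standard model is designed so that (i) the two arcs of $K_i \cap B_i$ are erased while the rest of the bindings glue into the new binding $K$, and (ii) the truncated pages of $\alpha_i$ join across the plumbing rectangle to form a page of $\Sigma$ on which $d\alpha$ remains a positive area form. The isotopy $\ker \alpha \simeq \xi_1 \# \xi_2$ then follows because, by construction, $E$ is a convex separating sphere of $(\ker \alpha)$ whose characteristic foliation is the standard one on $\bS^2$, and the contact manifolds-with-boundary obtained by splitting along $E$ coincide with $(M_i \setminus \mathrm{int}(B_i), \xi_i)$ capped off with a standard Darboux half-ball.

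The main obstacle is the explicit construction of the contact model on $B$: one needs a contact form in the $(x,y,z)$ coordinates of Section~\ref{sec:geomproof} that simultaneously (a) has the prescribed meridional behavior near the binding arcs $\tilde K \cap B$, (b) makes the truncated $\tilde\theta$-pages into Weinstein halves whose gluing along the plumbing rectangle yields a Weinstein page of $\Sigma$, and (c) transforms correctly under the involution induced by $\tilde\rho$, so that the forms match on $E$ after the identification. I would attack this by writing an explicit Liouville primitive symmetric under the rotations and reflections of the ellipsoid $E$ already exploited in Section~\ref{sec:geomproof}, using these symmetries as the scaffold on which to balance the three requirements above.
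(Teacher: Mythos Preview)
The paper does not prove this theorem. Theorem~\ref{thm:tor} appears only in Section~\ref{sec:ques}, where it is quoted as a known result of Torisu and is immediately followed by the open question of finding a higher-dimensional analogue ``using appropriate patches''. There is therefore no proof in the paper to compare your proposal against.

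As for the proposal itself, the strategy is sound in outline: Theorem~\ref{thm:genstal} gives you the open book $(\Sigma,\phi)$ on $M_1\#M_2$, and the contractibility of the space of supported contact structures reduces the problem to exhibiting one supported contact form that is visibly $\xi_1\#\xi_2$. Two points deserve care. First, the step ``arrange that each $\alpha_i$ coincides near the plumbing arc with an explicit standard form'' is doing real work: you need a relative version of the Thurston--Winkelnkemper construction (or a Moser/Gray argument) that standardizes a supported contact form near a properly embedded arc in a page while preserving the support condition globally. Second, $\tilde\rho$ is orientation-reversing on $E$, so it cannot literally pull back a contact form to a contact form; what you need is that the characteristic foliations induced on $E$ from the two sides match under $\tilde\rho$, which is what makes the convex-sphere gluing into a contact connected sum. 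Torisu's original argument in \cite{T 00} is organized around convex surface theory rather than an explicit Thurston--Winkelnkemper model, but your approach is in the spirit of the treatment in Etnyre's lectures \cite{E 06} and should go through once those two points are handled.
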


Let us point out that Giroux proved a particular instance of
Theorem~\ref{thm:tor} for stabilizations of open books  in higher
dimensions.
       \item
       In analogy with Goda's results of \cite{G 92}, find lower bounds for the following
       difference of Morse numbers (see Definition \ref{def:Morsenumb}):
            \[ m_k(\displaystyle{(W_1, M_1) \biguplus^P  (W_2, M_2)} )  -
                     (m_k(W_1, M_1) + m_k(W_2, M_2) )\]
         whenever $(W_i, M_i)$ are Seifert
         hypersurfaces in closed manifolds of the same dimension.
\end{enumerate}

\medskip
\end{document}